\newtheorem{define}{Definition}
\newtheorem{lemma}{Lemma}
\newtheorem{corollary}{Corollary}
\newtheorem{remark}{Remark}
\newtheorem{theorem}{Theorem}
\newtheorem{assumption}{Assumption}
\title{Wavelet characterization of exponentially weighted Besov space with dominating mixed smoothness and its application to function approximation}
\author[1]{
Yoshihiro Kogure}
\affil[1]{\small Department of Mathematical Informatics, Graduate School of Information Science and Technology, University of
Tokyo, 7-3-1 Hongo, Bunkyo-ku, Tokyo, 113-8656, Japan}
\author[1]{
Ken’ichiro Tanaka }
\date{}
\begin{document}
\newcommand\blfootnote[1]{%
  \begingroup
  \renewcommand\thefootnote{}\footnote{#1}%
  \addtocounter{footnote}{-1}%
  \endgroup
}
\maketitle
\begin{abstract}
Although numerous studies have focused on normal Besov spaces, limited studies have been conducted on exponentially weighted Besov spaces. Therefore, we define exponentially weighted Besov space $VB_{p,q}^{\delta,w}(\mathbb{R}^d)$ whose smoothness includes normal Besov spaces, Besov spaces with dominating mixed smoothness, and their interpolation. Furthermore, we obtain wavelet characterization of $VB_{p,q}^{\delta,w}(\mathbb{R}^d)$. Next, approximation formulas such as sparse grids are derived using the determined formula. The results of this study are expected to provide considerable insight into the application of exponentially weighted Besov spaces with mixed smoothness.
\end{abstract}

\section{Introduction}

\blfootnote{E-mail address: kogure-yoshihiro484@g.ecc.u-tokyo.ac.jp (Y. Kogure), kenichiro@mist.i.u-tokyo.ac.jp (K. Tanaka).}

$\quad \; \; \;$In this study, the coefficients of wavelet expansion were used to characterize exponentially weighted Besov space with dominating mixed smoothness. The characterization of Besov quasinorm by wavelet is a critical topic of research in the Besov space. Numerous studies have been conducted on the normal Besov space \cite{Triebel06,Sawano18} and the Besov space with dominating mixed smoothness \cite{Hansen10,Triebel19}. A decomposition theory on the Besov space with $A_{loc}^p$--weight was proposed \cite{Izuki09,Izuki12}. 
In this study, we focused on the exponentially weighted Besov space with dominating mixed smoothness for obtaining an accurate high-dimensional function approximation formula. First, the background and motivations for high-dimensional approximations were provided. We start discussion with sparse grids typically used in high-dimensional approximation.

Sparse grids, which were first introduced by Smolyak \cite{HM04}, are used to reduce computational cost in high-dimensional numerical methods. In sparse grids for function approximation, a function space is decomposed into hierarchical subspaces, and subsequently finite hierarchical subspaces are selected as approximation spaces. The smoothness of function spaces plays a crucial role in constructing efficient and refined sparse grids \cite{Griebel09,Griebel07}. 

Orthogonal basis in certain Banach spaces $V$ allow the decomposition of a space into infinite subspaces such as $V=\bigoplus_{ \bar{j} \in \mathbb{N}_0^d,\bar{m} \in \mathbb{Z}^d} W_{\bar{j},\bar{m}}$. Many Banach spaces, such as $L^p$ spaces, Sobolev spaces, Besov spaces, and Triebel--Lizorkin spaces, are decomposed by orthogonal basis. Several wavelets form the unconditional basis of those spaces and expression equivalent to their norms is given by using the coefficients of wavelet expansion. Wavelet characterization is the most critical property for constructing sparse grids. 

Let a function $u\in V$ which allows decomposition $ V = \bigoplus_{\bar{j}\in \mathbb{N}_0^d, \bar{m} \in \mathbb{Z}^d} W_{\bar{j},\bar{m}}$ be expressed as $u = \sum_{\bar{j} \in \mathbb{N}_0^d,\bar{m} \in \mathbb{Z}^d} u_{\bar{j},\bar{m}}$, where $u_{\bar{j},\bar{m}} \in W_{\bar{j},\bar{m}}$. Assuming we calculate the approximation $\tilde{u}$ of $u$ with a certain family of finite index sets $\{ G_{\bar{j}} \}_{\bar{j} \in \mathbb{N}_0^d}$ by $\sum_{\bar{j} \in \mathbb{N}_0^d} \sum_{\bar{m} \in G_{\bar{j}}} u_{\bar{j},\bar{m}}$, the approximation error becomes the following:

\begin{equation}
\label{error_intro}
         ||u-\tilde{u}||_V = ||\sum_{\bar{j} \in \mathbb{N}^d_0,\bar{m} \in \mathbb{Z}^d \backslash G_{\bar{j}}}u_{\bar{j},\bar{m}}||_V\leq \sum_{\bar{j} \in \mathbb{N}^d_0,\bar{m} \in \mathbb{Z}^d \backslash G_{\bar{j}}} ||u_{\bar{j},\bar{m}}||_V
\end{equation}

The set $\{ G_{\bar{j}} \}_{\bar{j} \in \mathbb{N}_0^d}$ should be selected so that approximation error (\ref{error_intro}) is minimized. This minimization depends on Banach space $V$ and its decomposition $V= \bigoplus_{\bar{j}\in \mathbb{N}_0^d, \bar{m} \in \mathbb{Z}^d} W_{\bar{j},\bar{m}}$. In this study, we selected Banaha space $V$ as exponentially weighted Besov space $VB^{\delta,w}_{p,q}(\mathbb{R}^d)$ (Section \ref{new_def}) and decomposition $\bigoplus_{\bar{j}\in \mathbb{N}_0^d, \bar{m} \in \mathbb{Z}^d} W_{\bar{j},\bar{m}}$ as subspaces spanned by each of one wavelet in $\{ \psi_{\bar{j},\bar{m}} \}_{\bar{j} \in \mathbb{N}_0^d , \bar{m} \in \mathbb{Z}^d}$ (Section \ref{characwave}).

The spatial decay of a function should be considered for obtaining refined sparse grids. As an example, assume we want to obtain a superior approximation of a function that has an exponential decay at infinity with compact support basis. A basis near the origin is more crucial than a basis far away from the origin. The importance of basis depends on where the support of basis exists on $\mathbb{R}^d$. The motivation to consider smoothness and decay is that the eigenfunctions of Hamilton operator $H$ defined as follows:
\begin{equation}
    H := 
    -\frac{1}{2} \sum_{i=1}^{N} \Delta_i
    -\sum_{i=1}^{N} \sum_{\nu =1}^{K} \frac{Z_{\nu}}{|x_i-a_{\nu}|}
    +\frac{1}{2}
    \sum_{i,j=1 i\neq j}^{N} \frac{1}{|x_i-x_j|}
\end{equation}
is known to belong to $H^{1,0}_{mix} \cap \bigcap_{\vartheta < 3/4} H^{\vartheta,1}_{mix}$ \cite{Kreusler12} and show exponential decay \cite{O'Connor73}. Here, $H^{1,0}_{mix}$ and $H^{\vartheta,1}_{mix}$ is defined by the following:
\begin{equation}
    H^{1,0}_{mix} := 
    \{
    u \in L^2(\mathbb{R}^d) :
    \sum_{\bar{\alpha} \in \mathcal{A}} \|D^{\bar{\alpha}} u \|_2 < \infty
    \}
\end{equation}
\begin{equation}
    H^{\vartheta,1}_{mix} := (H^1,H^{1,1}_{mix})_{\vartheta,2}
\end{equation}
where $\mathcal{A} = \{ (\bar{\alpha}_1,\bar{\alpha}_2,\cdots,\bar{\alpha}_N): \bar{\alpha}_i \in \mathbb{N}^3_0, \: \alpha_{i,1}+\alpha_{i,2}+\alpha_{i,3} \leq 1\}$ and
\begin{equation}
    H^{1}
    :=
    \{
    u \in L^2(\mathbb{R}^d) :
    \sum_{
    \bar{\beta} \in \mathbb{N}_0^{3N},  |\bar{\beta}|_1 \leq 1 } \|D^{\bar{\beta}} u \|_2 < \infty
    \}
\end{equation}
\begin{equation}
    H^{1,1}_{mix}
    :=
    \{
    u \in L^2(\mathbb{R}^d) :
    \sum_{\substack{\bar{\alpha} \in \mathcal{A} \\
    \bar{\beta} \in \mathbb{N}_0^{3N}, \: |\bar{\beta}|_1 \leq 1 }} \|D^{\bar{\beta}} D^{\bar{\alpha}} u \|_2 < \infty
    \}
\end{equation}
The Hamilton operator, $H$, appears in the Schrödinger equation such that multiple electrons interact. The dimension of this system is $3N$, where $N$ denotes the number of electrons. With full grid calculation, considerable curse of dimensionality occurs if $N$ is so large. A remedy for this difficulty is to select basis according to the importance of basis. A study obtained sparse grids in the exponentially weighted Sobolev space with dominating smoothness\cite{And12}. This study extends the results to the Besov space.

First, we define a novel Besov space with dominating mixed smoothness. We used the Besov space because of the availability of various norm-equivalent expressions. Especially, the characterizations of quasinorm by wavelet expansion plays an essential role. However, smoothness like $H^{\vartheta,1}_{mix}$ 
is yet to be formulated in the Besov space. In classical Besov space
$B^s_{p,q}(\mathbb{R}^d)$ theory, parameter $s$ denotes smoothness or differentiability. From the perspective of definition of Besov space based on Fourier transform (\ref{besov}), this smoothness parameter $s$ controls the convergence rate of $L^p$-norm of certain frequency components, that is, $\| f_j \|_p = o(2^{-js})$ where $f_j$ is a function whose frequency domain 
supp$\: \mathcal{F}[f_j] = \{\xi \in \mathbb{R}^d \: | \: 2^j \lesssim |\xi|_2 \lesssim  2^{j+1}\}$. To extend this approach to the Besov space with dominating mixed smoothness, the function should be decomposed into a sequence of functions whose support after Fourier transformation is narrow. In \cite{Hansen10}, to define the Besov space with dominating mixed smoothness, a function is decomposed so that the convergence rate of 
$\| f_{\bar{j}} \|_p \approx o(2^{-s |\bar{j}|_1)})$, where $\bar{j}=(j_1,\cdots,j_d) \in \mathbb{N}_0^d$ and $\text{supp} \; \mathcal{F}[f_{\bar{j}}] = \{\xi=(\xi_1,\cdots,\xi_d) \in \mathbb{R}^d \: | \: 2^{j_i} \lesssim |\xi_i| \lesssim  2^{j_i+1} \text{ for all }i=1,\cdots,d \}$ ($\mathcal{F}[f_j]$ is Fourier transform of $f_j$). We consider a broad framework by generalizing $|\cdot|_1$-norm to norm $\delta(\cdot)$ on $\mathbb{N}_0^d$ (Definition \ref{deltaa} or Definition \ref{defVB}) so that $\| f_{\bar{j}}\|_p \approx o(2^{-\delta(\bar{j})})$

In the other strategy that involves obtaining more refined sparse grids, a weighted Besov space suitable to our settings was introduced. By specifying the decay of a function, a suitable choice of basis was obtained according to the location of the support of basis. When the weight function grows or decreases with a polynomial rate, the classical theory of Besov spaces can be easily applied. However, when the weight function grows or decreases with an exponential rate, such a theory cannot be applied. The Hardy--Littlewood maximal operator is not bounded in $L_p^w$, where the weight function $w$ is exponential. In \cite{Rychkov01}, Rychkov constructed the Besov space with $A^p_{loc}$--weight, which is a wider weight class than exponential weight, by introducing a local reproducing formula and Hardy--Littlewood local maximal operator. Thus, an exponentially weighted Besov space was obtained.

The contributions of this study can be listed as follows:
\begin{enumerate} 
    \item  As a generalization of smoothness, the Besov space, which allows interpolation between the normal Besov space, the Besov space with dominating mixed smoothness, and their interpolation are given. (Section \ref{section3}, Section \ref{new_def} and Section \ref{interpo})

    \item The wavelet characterization of the exponentially weighted Besov space with dominating mixed smoothness is provided. (Section \ref{section5} and Section \ref{characwave})

    \item By using wavelet characterization, both smoothness and exponential decay are considered in the approximation formula. (Section \ref{section8})
\end{enumerate}

\section{Notations}
First, we detail notations used in this paper.
\begin{enumerate}
    \item $\bar{j}$ is av $d$-dimensional vector, that is, $\bar{j}=(j_1,j_2,\cdots,j_d)$. In the case $\bar{1}$, $\bar{1}$ represents $d$-dimensional vector whose components are $1$ ,that is, $\bar{1}:=(1,1,\cdots,1)$.
    \item $\bar{j} \bar{k}$ represents the componentwise multiplications, that is, $\bar{j} \bar{k}=(j_1 k_1,j_2 k_2,\cdots,j_d k_d)$.
    \item Fourier transform $\mathcal{F}$ and its inverse $\mathcal{F}^{-1}$ is are defined by the following equation:
    \begin{equation}
        \mathcal{F} f(\xi)
        :=
        \frac{1}{(2 \pi)^{d/2}} \int_{\mathbb{R}^d}
        f(x) e^{-i \xi \cdot x} dx
        \qquad
        \mathcal{F}^{-1} f(x)
        :=
        \frac{1}{(2 \pi)^{d/2}} \int_{\mathbb{R}^d}
        f(\xi) e^{i x \cdot \xi} d \xi
    \end{equation}
    For simplicity, $\widehat{f}$ is also used to represent $\mathcal{F}f$.

    \item Convolution of two functions $f$ and $g$ (denoted by $f*g(x)$ ) is defined by
    \begin{equation}
        f*g(x) := \int_{\mathbb{R}^d} f(x-y) g(y) dy
    \end{equation}
    where $f$ and $g$ are functions on $\mathbb{R}^d$.
    \item $\langle f, g \rangle$ is $L^2$-product of $f,g \in L^2$.
    \begin{equation}
        \langle f, g \rangle
        :=
        \int_{\mathbb{R}^d} f(x) g(x) dx
    \end{equation}

    \item Dot notation in function argument like $f(\cdot)$ or $e^{\cdot}$ is used to simplify the notations like $f(x)$ or $e^{x}$.
    \item $|\cdot|_1$ ($\ell^1$-norm) and $|\cdot|_{\infty}$ (infinite norm) are norms on $\mathbb{R}^d$.
    \begin{equation*}
        |\bar{j}|_1:=|j_1|+|j_2|+\cdots+|j_d|
        \qquad
        |\bar{j}|_{\infty}:= \max_{i=1\cdots d} |j_i|
    \end{equation*}
    
    $\delta$ denotes an arbitrary norm on $\mathbb{R}^d$.

    \item For $z \in \mathbb{C}$, $\Im z$ represents the imaginary part of $z$.
    
    \item Let $A$ and $B$ be positive real numbers. $A \approx B$ reveals that positive real numbers $c_1$ and $c_2$ exist such that $c_1 A \leq B \leq c_2 A$. Sometimes, $A$ and $B$ are replaced by function spaces. In this case, $A \approx B$ denote $\| \cdot \|_A \approx \| \cdot \|_B$ where $\|\cdot \|_A$ and $\| \cdot \|_B$ are their norms.
    
    \item Let $A$ and $B$ be positive real numbers. $A \lesssim B$ and $A \gtrsim B$ reveals positive real numbers $c_1$ and $c_2$ exist such that $c_1 A \leq B$ and $A \geq c_2 B$ each.
    
    \item Let $w(x)$ be a weight function (\textcolor{black}{that is, positive and locally integrable function}) and $Q$ be a domain. Here, $w(Q)$ is defined by the following expression:
    \begin{equation}
        w(Q) := \int_{Q} w(x) dx
    \end{equation}

     \item $\| \cdot \|_p$ is a normal $L^p$ norm. $\|\cdot \|_{L^p_w}$ is a weighted $L^p$ norm defined by the following expression:
    \begin{equation}
        \|f\|_{L^p_w}
        :=
        \left(
        \int
        |f(x)|^p w(x) dx
        \right)^{1/p}
    \end{equation}

    \item $\mathcal{S}$ denotes the Schwartz space and $\mathcal{S}'$ is its topological dual.

    \item $\mathcal{S}_e$ is a collection of exponentially decreasing $C^{\infty}$ functions whose topology is induced by seminorms defined in (\ref{test_exp}). Here, $\mathcal{S}_e'$ is its topological dual.

    \item $VB^{\delta,w}_{p,q}(\mathbb{R}^d)$ is an exponentially weighted Besov space with dominating smoothness (Definition \ref{defVB} in Section \ref{new_def}).
    
    \item $X_b^{\bar{s}}(\mathbb{R}^d)$ is a test function space (Definition \ref{test_exp} in Section \ref{new_def}).
    
\end{enumerate}
\section{Besov space and extension of its smoothness}
\label{section3}
$\qquad$In this section, a generalized smoothness of the Besov space is considered. We define a new Besov space $VB_{p,q}^{\delta}(\mathbb{R}^d)$, which includes the normal Besov space and the Besov space with mixed smoothness and their interpolation. This generalization exhibits considerable freedom in selecting the smoothness parameter. In the first subsection, we detail this generalization by observing a construction of the Besov space with mixed smoothness. In the second subsection, we detail a definition of the Besov space with generalized smoothness $VB_{p,q}^{\delta}(\mathbb{R}^d)$.

\subsection{Normal Besov space and Besov space with mixed smoothness}

$\qquad$The Besov space is used to decompose a function into a sequence of functions according to the frequency domain and control their convergence rate of $L^p$-norm. In the theory of the Besov space, the dyadic resolution of unity is used to obtain this decomposition. To define the normal Besov space, $B^{s}_{p,q}(\mathbb{R}^d)$ and Besov space with dominating mixed smoothness $MB^{\bar{s}}_{p,q}(\mathbb{R}^d)$, two decomposition methods are used: (a)circular decomposition $\{\phi_j \}_{j \in \mathbb{N}_0}$ (b)rectangular decomposition $\{\Phi_{\bar{j}}\}_{\bar{j}\in \mathbb{N}_0^d}$. 

\begin{enumerate}[(a)]

\item Circular decomposition is expressed as follows. Let $\phi_0(\xi) \in \mathcal{S}$ such that
$\text{supp} \phi_0 \subset \{\xi \in \mathbb{R}^d \; : \; |\xi|<2 \} \qquad$ and $\qquad \phi_0(\xi)=1 \quad if \quad |\xi|\leq 1$
and $\phi_j(\xi) = \phi_0(2^{-j} \xi)- \phi_0(2^{-j+1} \xi)$. Then, $\{\phi_j \}_{j \in \mathbb{N}_0}$ satisfy the properties (i) and (ii).
\begin{enumerate}[(i)]

\item $\text{supp} \phi_j \subset \{ \xi \in \mathbb{R}^d \: : \: 2^{j-1} \leq |\xi| \leq 2^{j+1} \} \quad \text{for} \quad j=1,2,3,\cdots \quad \text{and} \quad \text{supp} \phi_0 \subset \{ \xi \in \mathbb{R}^d \: : \:  |\xi| \leq 2 \}$

\item $\quad \sum_{j=0}^{\infty} \phi_j(\xi) = 1$
\end{enumerate}
Here, (ii) reveals that constant 1 is decomposed by a sequence of $C_0^{\infty}$ functions whose support dyadically increases, and $\{\phi_j \}_{j \in \mathbb{N}_0}$ is the dyadic resolution of unity. 

\item Rectangular decomposition $\{\Phi_{\bar{j}}\}_{\bar{j}\in \mathbb{N}_0^d}$ is expressed using the one-dimensional dyadic resolution of unity $\{\phi_j \}_{j \in \mathbb{N}_0}$. Summation of all $\{\Phi_{\bar{j}}\}_{\bar{j}\in \mathbb{N}_0^d}$ is equal to $1$.
\begin{equation}
\label{Psi}
    \Phi_{\bar{j}}(\xi_1, \cdots, \xi_d) = \prod_{i=1}^d \phi_{j_i} (\xi_i)
\quad \text{and} \quad
\sum_{\bar{j}\in \mathbb{N}^{d}}\Phi_{\bar{j}} (\xi)
=
\prod_{i=1}^{d} \sum_{j_i \in \mathbb{N}_0}\phi_{j_i}(\xi_i)
=1
\end{equation}
\end{enumerate}
The difference between circular decomposition $\{\phi_j \}_{j \in \mathbb{N}_0}$ and rectangular decomposition $\{\Phi_{\bar{j}}\}_{\bar{j}\in \mathbb{N}_0^d}$ is the fineness of decomposition. This result leads to a difference between the normal Besov space $B^{s}_{p,q}(\mathbb{R}^d)$ and Besov space with mixed smoothness $MB^{\bar{s}}_{p,q}(\mathbb{R}^d)$. The definitions of $B^{s}_{p,q}(\mathbb{R}^d)$ and $MB^{\bar{s}}_{p,q}(\mathbb{R}^d)$ are given so that each quasinorms defined below is finite.

\begin{equation}
\label{besov}
    \left\|f|B_{p,q}^{s}(\mathbb{R}^d)\right\|
:=
\left(\sum_{j=0}^{\infty}2^{sjq}
||\phi_{j}(\mathcal{D})f||_p^q\right)^{1/q}
\end{equation}

\begin{equation}
\label{mixedbesov}
    \left\|f|MB_{p,q}^{\bar{s}}(\mathbb{R}^d)\right\|
:=
\left(\sum_{\bar{j}\in \mathbb{N}_0^d}2^{\bar{s} \cdot \bar{j}q}
||\Phi_{\bar{j}}(\mathcal{D})f||_p^q\right)^{1/q}
\end{equation}
where $\phi_j (\mathcal{D})=\mathcal{F}^{-1} \phi_j \mathcal{F}$ and $\Phi_{\bar{j}} (\mathcal{D})=\mathcal{F}^{-1} \Phi_{\bar{j}} \mathcal{F}$.

The volume of supp$\: \phi_i$ reveals the same order of volume of all supp$\: \Phi_{\bar{j}}$ for $|\bar{j}|_{\infty}=i$. Therefore, $B_{p,q}^{s}(\mathbb{R}^d)$ can be defined by using a rectangular decomposition $\{\Phi_{\bar{j}}\}_{\bar{j}\in \mathbb{N}_0^d}$ instead of circular decomposition $\{\phi_j \}_{j \in \mathbb{N}_0}$ with infinite norm $|\cdot|_{\infty}$. This is justified subsequently (Lemma \ref{new_old}). To perform a mathematical treatment of this intuition, a novel Besov space $VB^{\delta}_{p,q}$ is expressed as a generalization of smoothness in the next subsection. This generalization allow us an interpolation between the normal Besov space $B^{s}_{p,q}(\mathbb{R}^d)$ and Besov space with mixed smoothness $MB^{\bar{s}}_{p,q}(\mathbb{R}^d)$. For more details, refer to Lemma \ref{new_old} and Lemma \ref{interpolation} in Section \ref{interpo}

\subsection{Extension of smoothness \texorpdfstring{$VB^{\delta}_{p,q}(\mathbb{R}^d)$}{Lg}
}

\begin{define}
\label{deltaa}
$VB^{\delta}_{p,q}(\mathbb{R}^d)$ is a collection of $f\in \mathcal{S}'$ whose norm 
\begin{equation}
    \left\|f|VB_{p,q}^{\delta}(\mathbb{R}^d)\right\|
:=
\left(\sum_{\bar{j}\in \mathbb{N}_0^d}2^{\delta(\bar{j}) q}
||\Phi_{\bar{j}}(\mathcal{D})f||_p^q\right)^{1/q}
\end{equation}
is finite, where $\delta$ is a norm on $\mathbb{N}_0^d$ and $\Phi_{\bar{j}}(\mathcal{D}) := \mathcal{F}^{-1} \Phi_{\bar{j}} \mathcal{F}$.
\end{define}
Norm $\delta$ provides a flexibility of smoothness. Thus, $\delta$ controls the convergence rate of $||\Phi_{\bar{j}}(\mathcal{D})f||_p^q$ better compared to that of $MB^{\bar{s}}_{p,q}(\mathbb{R}^d)$. As a special case, $VB^{\delta}_{p,q}(\mathbb{R}^d)$ is equal to $MB^{\bar{s}}_{p,q}(\mathbb{R}^d)$ when $\delta(\bar{j})=\bar{s} \cdot \bar{j}$.Here, $VB^{\delta}_{p,q}(\mathbb{R}^d)$ also turns out to be an extension of $B^{s}_{p,q}(\mathbb{R}^d)$ by Theorem \ref{infty_norm}, which is proved after obtaining the wavelet characterization. Although Theorem \ref{infty_norm} referes to weighted case, it also holds for the nonweighted case.

\section{Exponentially weighted Besov space}
\label{new_def}
$\qquad$In addition to generalized smoothness, we consider the exponentially weighted Besov space. Here, the exponential weight indicates that the weight function $w(x)$ increases or decreases exponentially.
\begin{equation}
\label{weight_assumption}
    0<w(x) \lesssim w(x-y) e^{c_w |y|_1} \quad \text{for all} \quad x,y \in \mathbb{R}
\end{equation}
where $c_w$ is a positive constant that depends on $w$. We assume $w$ is a positive and locally integrable function. The theoretical framework of the exponentially weighted Besov space was proposed in \cite{Schott98} by introducing following exponentially decreasing test function space $\mathcal{S}_e$:
\begin{equation}
\label{test_exp}
    \mathcal{S}_e
    :=\{ \phi \in C^{\infty}(\mathbb{R}^d) \; | \;
    q_N(\phi) := \sup_{x \in \mathbb{R}^d}  e^{N|x|_1} \sum_{|\alpha| \leq N} |\partial^{\alpha} \phi(x)| < \infty
    \}
\end{equation}
The topology of $\mathcal{S}_e$ is induced by the following seminorms $\{ q_N \}_{N \in \mathbb{N}}$. The exponentially weighted Besov space is considered to be a subspace of distribution $\mathcal{S}_e'$. However, $\mathcal{S}_e$ is not appropriate for analyzing a space with wavelets whose smoothness is limited (that is, such wavelets are not included in $\mathcal{S}_e$). Therefore, we introduce a test function space $X_{b}^{\bar{s}}(\mathbb{R}^d)$ with limited smoothness. The following definition of $X_{b}^{\bar{s}}(\mathbb{R}^d)$ is inspired by \cite{Hansen10}.

\begin{define}
Let $b \in \mathbb{R}$ and $\bar{s} \in \mathbb{R}^d$.
$X_{b}^{\bar{s}}(\mathbb{R}^d)$ is a subspace of $MB^{\bar{s}}_{2,2}(\mathbb{R}^d)$ such that
\begin{equation}
     X_{b}^{\bar{s}}(\mathbb{R}^d)
    =
    \{
    \varphi \in MB^{\bar{s}}_{2,2}(\mathbb{R}^d) \: : \:
    \sum_{0\leq \bar{\alpha} \leq \bar{s}}
    \sum_{0\leq \bar{\beta} \leq \bar{s}}
    \left(
    \int_{\mathbb{R}^d} \left| e^{b  |x|_1} x^{\bar{\beta}} D^{\bar{\alpha}} \varphi(x) \right|^2 dx \right)^{1/2}<\infty \} 
\end{equation}
\end{define}

Therefore, we define exponentially weighted Besov space $VB_{p,q}^{\delta,w}(\mathbb{R}^d)$. Here, nonweighted version $VB_{p,q}^{\delta}(\mathbb{R}^d)$ as an extension of smoothness of the Besov space has already been defined in the previous section.

\begin{define}
\label{defVB}
Let $w$ be a weight function of (\ref{weight_assumption}) and $\delta$ be a norm on $\mathbb{N}_0^d$. Let $1 \leq p,q \leq \infty$, $c_w/p < b$, $L \in \mathbb{N}_0$ and $\delta(\bar{j}) <  L|\bar{j}|_1$ for all $\bar{j} \in \mathbb{N}_0^d$. Here, $VB_{p,q}^{\delta,w}(\mathbb{R}^d)$ is a space defined by the following quasinorm.
\begin{equation}
\label{vbesov}
    \left\|f|VB_{p,q}^{\delta,w}(\mathbb{R}^d)\right\|
:=
\left(\sum_{j=0}^{\infty} 2^{\delta(\bar{j})q}
\|\boldsymbol{\psi}_{\bar{j}}*f\|_{L_p^w}^q\right)^{1/q}
\qquad
f \in X_{b}^{\bar{s}}(\mathbb{R}^d)'
\end{equation}
where $\boldsymbol{\psi}_{\bar{j}}$ is defined by the equation:
\begin{equation}
\label{bolpsi}
\boldsymbol{\psi}_{\bar{j}}(x)
    := \prod_{i=1}^{d} \psi_{j_i}^{i}(x_i)
    \quad
    \text{and}
    \quad
    \psi_{j_i}^{i}(x_i) := 2^{j_i} \psi^{i}(2^{j_i} x_i)
\end{equation}
where $\psi^{i}(\cdot) := \psi_0^{i}(\cdot)-2^{-1}\psi_0^{i}(2^{-1}\cdot)$ and $\psi_0^{i} \in X_b^{L+1}(\mathbb{R})$ such that $\int_{\mathbb{R}} \psi_0^{i} * \psi_0^{i} (x) dx = 1 $ and $(D^s \widehat{\psi^i})(0)=0$ for $s=0,1,...,L$.
\end{define}
A big difference of (\ref{vbesov}) in terms of the definition.\ref{defVB} from (\ref{besov}) and (\ref{mixedbesov}) is as follows:  the test functions ($\boldsymbol{\psi}_{\bar{j}}$ and $\phi_j(\mathcal{D})= \mathcal{F}^{-1} \phi_j$, $\Phi_{\bar{j}}(\mathcal{D})= \mathcal{F}^{-1} \Phi_{\bar{j}}$)convoluted with $f$. Therefore, we cannot use $\Phi_{\bar{j}}(\mathcal{D})$ as test functions because $\mathcal{F}^{-1} \Phi_{\bar{j}}$ is not in $X^{\bar{s}}_b(\mathbb{R}^d)$

Here, we define the normal exponentially weighted Besov space (weighted version of $B^{s}_{p,q(\mathbb{R}^d)}$) as follows:

\begin{define}
    Let $w$ be a weight function of (\ref{weight_assumption})and $1 \leq p,q \leq \infty$, $c_w/p < b$. $B_{p,q}^{s,w}(\mathbb{R}^d)$ is a space defined by the following quasinorm:
\begin{equation}
\label{wnbesov}
    \left\|f|B_{p,q}^{s,w}(\mathbb{R}^d)\right\|
:=
\left(\sum_{j=0}^{\infty} 2^{sjq}
\|\Psi_{j}*f \|_{L_p^w}^q \right)^{1/q}
\qquad
f \in X_{b}^{\bar{s}}(\mathbb{R}^d)'
\end{equation}
where $\Psi(\cdot) := \Psi_0(\cdot)-2^{-d}\Psi_0(2^{-1}\cdot)$ and $\Psi_0 \in X_b^{\bar{L}+\bar{1}}(\mathbb{R}^d)$ such that $s \leq \min\{ \bar{L} \}$ and $\int_{\mathbb{R}^d} \Psi_0 * \Psi_0(x) dx = 1 $ and $(D^{\bar{s}} \widehat{\psi^i})(0)=0$ for $s=0,1,...,L$.
\end{define}

This study obtained the wavelet characterization of $VB_{p,q}^{\delta,w}(\mathbb{R}^d)$. Some technical results are moved to appendix for readability. Therefore, these technical results are presented in the appendix for simplicity. In the next section, we state the results on the equivalence relations of $VB^{\delta,w}_{p,q}(\mathbb{R}^d)$ quasinorm.

\section{Equivalence relation of \texorpdfstring{$VB^{\delta,w}_{p,q}(\mathbb{R}^d)$}{Lg}}
\label{section5}
$\qquad$ We detailed two equivalent relations (\ref{equi1}) and (\ref{equi2}). Especially, (\ref{equi1}) details the definition of $VB^{\delta,w}_{p,q}(\mathbb{R}^d)$ is independent of the choice of test functions $\{\boldsymbol{\psi}_{\bar{j}}\}_{\bar{j} \in \mathbb{N}_0^d}$ in definition.\ref{defVB}. By contrast, (\ref{equi2}) and subsequent (\ref{large2}) are about the Pettere maximal operator and are completely technical preparations for the proof of wavelet characterization in Section.\ref{characwave}.

\begin{theorem}
\label{equivalence}
Let $f \in VB_{p,q}^{\delta,w}(\mathbb{R}^d)$. 
Let $b > 0$ such that $c_w/p < b$ and $\varphi_0^{i},\psi_0^{i} \in X_b^{L+1}(\mathbb{R}^d)$ and $\varphi^{i}(\cdot) := \varphi_0^{i}(\cdot)-2^{-1}\varphi_0^{i}(2^{-1}\cdot)$, $\psi^{i}(\cdot) := \psi_0^{i}(\cdot)-2^{-1}\psi_0^{i}(2^{-1}\cdot)$ such that $\int_{\mathbb{R}} \varphi_0^{i} * \varphi_0^{i} (x) dx = 1 $, $\int_{\mathbb{R}} \psi_0^{i} * \psi_0^{i} (x) dx = 1 $, $(D^s \widehat{\varphi^i})(0)=0$ and $(D^s \widehat{\psi^i})(0)=0$ for $s=0,1,...,L$ where $L$ is a natural number such that $\delta(\bar{j}) <  L|\bar{j}|_1$ for all $\bar{j} \in \mathbb{N}_0^d$. Define
$\boldsymbol{\varphi}_{\bar{j}}$ and $\boldsymbol{\psi}_{\bar{j}}$ by the expression:
\begin{equation}
    \boldsymbol{\varphi}_{\bar{j}}(x)
    := \prod_{i=1}^{d} \varphi_{j_i}^{i}(x_i)
    \qquad
    \boldsymbol{\psi}_{\bar{j}}(x)
    := \prod_{i=1}^{d} \psi_{j_i}^{i}(x_i)
\end{equation}
and
\begin{equation}
    \phi_{j_i}^{i}(x_i) := 2^{j_i} \phi^{i}(2^{j_i} x_i)
    \quad
    \text{and}
    \quad
    \psi_{j_i}^{i}(x_i) := 2^{j_i} \psi^{i}(2^{j_i} x_i)
\end{equation}
Then,
\begin{equation}
\label{equi1}
    \left(\sum_{\bar{j}\in \mathbb{N}^d}2^{q \delta( \bar{j})}
||\boldsymbol{\varphi}_{\bar{j}}*f||_{L_p^{w}}^q\right)^{1/q}
\approx
\left(\sum_{\bar{j}\in \mathbb{N}^d}2^{q \delta( \bar{j})}
||\boldsymbol{\psi}_{\bar{j}}*f||_{L_p^{w}}^q\right)^{1/q}
\end{equation}
\end{theorem}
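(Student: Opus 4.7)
The plan is to prove (\ref{equi1}) by a standard Calder\'on almost-orthogonality argument adapted to the exponentially weighted setting. By symmetry under exchange of $\boldsymbol{\varphi}$ and $\boldsymbol{\psi}$, it suffices to prove $\lesssim$. The four ingredients I would assemble are: (i) a Calder\'on-type reproducing identity
\begin{equation*}
f = \sum_{\bar{k} \in \mathbb{N}_0^d} \boldsymbol{\varphi}_{\bar{k}} * \boldsymbol{\varphi}_{\bar{k}} * f \qquad \text{in } X_b^{\bar{s}}(\mathbb{R}^d)',
\end{equation*}
the normalization $\int \varphi_0^i * \varphi_0^i(x)\,dx = 1$ and the $L+1$ vanishing moments of $\varphi^i$ being precisely the conditions needed for its validity; (ii) a weighted Young inequality
\begin{equation*}
\|g * h\|_{L^p_w} \lesssim \|h\|_{L^1_{e^{(c_w/p)|\cdot|_1}}}\,\|g\|_{L^p_w},
\end{equation*}
which follows from (\ref{weight_assumption}) and Minkowski via the translation estimate $\|g(\cdot - y)\|_{L^p_w} \lesssim e^{(c_w/p)|y|_1}\|g\|_{L^p_w}$; (iii) an off-diagonal kernel decay estimate for $\boldsymbol{\psi}_{\bar{j}} * \boldsymbol{\varphi}_{\bar{k}}$; and (iv) a discrete Young / Schur inequality on $\ell^q$.

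\textbf{Kernel estimate.} The quantitative input is
\begin{equation*}
\|\boldsymbol{\psi}_{\bar{j}} * \boldsymbol{\varphi}_{\bar{k}}\|_{L^1_{e^{(c_w/p)|\cdot|_1}}} \lesssim 2^{-(L+1)|\bar{j}-\bar{k}|_1}.
\end{equation*}
The product structure $\boldsymbol{\psi}_{\bar{j}} * \boldsymbol{\varphi}_{\bar{k}} = \prod_{i} \psi^i_{j_i} * \varphi^i_{k_i}$ and the factorization of the weight reduce this to the one-dimensional statement $\|\psi^i_{j_i} * \varphi^i_{k_i}\|_{L^1_{e^{(c_w/p)|\cdot|}}} \lesssim 2^{-(L+1)|j_i-k_i|}$. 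Assuming without loss of generality $k_i \geq j_i$, I would Taylor-expand $\psi^i_{j_i}(x-y)$ in $y$ to order $L$ about $0$; the polynomial part is annihilated upon integration against $\varphi^i_{k_i}$ by the $L+1$ vanishing moments (equivalent to $(D^s \widehat{\varphi^i})(0) = 0$ for $0\leq s \leq L$), and the remainder is controlled by $|y|^{L+1}$ times the $(L+1)$-th derivative of $\psi^i_{j_i}$. Scaling yields the decay factor $2^{-(L+1)(k_i-j_i)}$. The exponential weight is absorbed using $c_w/p < b$ together with the $e^{-b|\cdot|}$ decay of $\psi^i, \varphi^i \in X_b^{L+1}(\mathbb{R})$ (obtained from the weighted $L^2$ definition of $X_b^{L+1}$ via one-dimensional Sobolev embedding).

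\textbf{Assembly.} Convolving the reproducing formula with $\boldsymbol{\psi}_{\bar{j}}$ and applying weighted Young yields
\begin{equation*}
\|\boldsymbol{\psi}_{\bar{j}} * f\|_{L^p_w} \lesssim \sum_{\bar{k} \in \mathbb{N}_0^d} 2^{-(L+1)|\bar{j}-\bar{k}|_1}\,\|\boldsymbol{\varphi}_{\bar{k}} * f\|_{L^p_w}.
\end{equation*}
Multiplying by $2^{\delta(\bar{j})}$ and using $|\delta(\bar{j}) - \delta(\bar{k})| \leq C_\delta |\bar{j}-\bar{k}|_1$ with $C_\delta < L+1$ (a consequence of the hypothesis $\delta(\bar{j}) < L|\bar{j}|_1$ together with norm equivalence on $\mathbb{R}^d$), the kernel
\begin{equation*}
a(\bar{j},\bar{k}) := 2^{\delta(\bar{j}) - \delta(\bar{k})} \cdot 2^{-(L+1)|\bar{j}-\bar{k}|_1}
\end{equation*}
decays geometrically in $|\bar{j}-\bar{k}|_1$ at rate $L+1-C_\delta > 0$, hence is summable in each argument uniformly in the other. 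The discrete Young inequality $\ell^1 \ast \ell^q \hookrightarrow \ell^q$ then delivers
\begin{equation*}
\Bigl(\sum_{\bar{j}} \bigl(2^{\delta(\bar{j})} \|\boldsymbol{\psi}_{\bar{j}} * f\|_{L^p_w}\bigr)^q\Bigr)^{1/q} \lesssim \Bigl(\sum_{\bar{k}} \bigl(2^{\delta(\bar{k})} \|\boldsymbol{\varphi}_{\bar{k}} * f\|_{L^p_w}\bigr)^q\Bigr)^{1/q},
\end{equation*}
which is (\ref{equi1}) in the direction $\lesssim$.

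\textbf{Main obstacle.} The principal difficulty is not the kernel estimate, which is a routine almost-orthogonality calculation, but the rigorous justification of the reproducing identity within the weighted distributional framework $X_b^{\bar{s}}(\mathbb{R}^d)'$: convergence of the partial sums in the correct topology, the interchange of summation with convolution against a distribution $f$, and ensuring that each $\boldsymbol{\varphi}_{\bar{k}} * f$ is a locally integrable function for which the weighted $L^p$ norm is meaningful. These foundational points, driven by the paper's replacement of $\mathcal{S}_e$ by the limited-smoothness test space $X_b^{\bar{s}}$ to accommodate wavelets, are the substance of the technical appendix the paper defers to.
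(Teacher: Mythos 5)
Your overall architecture (reproducing formula, exponentially weighted kernel decay, weighted Young, then discrete convolution on $\ell^q$ using $\delta(\bar j-\bar k)\le L|\bar j-\bar k|_1 < (L+1)|\bar j-\bar k|_1$) is the same as the paper's, and steps (ii)--(iv) are essentially the paper's (\ref{norm1})--(\ref{mbound}) and the final summation. The genuine gap is your ingredient (i): the identity $f=\sum_{\bar k}\boldsymbol{\varphi}_{\bar k}*\boldsymbol{\varphi}_{\bar k}*f$ is false under the stated hypotheses. The normalization $\int\varphi_0^i*\varphi_0^i=1$ only makes the sum $\sum_{j=0}^N g_j$ telescope when $g_j$ is the dilation of $g_0=\varphi_0*\varphi_0$ with $g=g_0-2^{-1}g_0(2^{-1}\cdot)$; in Fourier terms $\widehat g(\xi)=\widehat\varphi(\xi)\bigl(\widehat\varphi_0(\xi)+\widehat\varphi_0(2\xi)\bigr)$, so the valid first-order identity is $f=\sum_j\varphi_j*\mu_j*f$ with $\mu_j=2^j\varphi_0(2^j\cdot)+2^{j-1}\varphi_0(2^{j-1}\cdot)$, not $\varphi_j*\varphi_j$; there is no reason for $\sum_j\widehat{\varphi_j}^{\,2}\equiv 1$, and vanishing moments of $\varphi$ do not help here. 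This is not merely a rigor issue to be deferred: if you run your argument with the identity that actually holds, the kernel becomes $\boldsymbol{\psi}_{\bar j}*\boldsymbol{\mu}_{\bar k}$, and since $\mu_k$ has \emph{no} vanishing moments the off-diagonal decay $2^{-(L+1)|\bar j-\bar k|_1}$ fails in the direction $k_i>j_i$, so the Schur/Young step collapses.

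What repairs this — and what the paper actually does — is the Rychkov-type construction in Lemma \ref{kernel}: raise the approximate identity to the $K$-fold convolution power with $K=L+2$, regroup the terms, and thereby manufacture a dual family $\boldsymbol{\zeta}_{\bar j}\in X_b^{\bar L+\bar 1}(\mathbb{R}^d)$ with $(D^{\alpha}\widehat{\zeta})(0)=0$ for $\alpha\le L$ such that $f=\sum_{\bar j}\boldsymbol{\zeta}_{\bar j}*\boldsymbol{\psi}_{\bar j}*f$ in $X_b^{\bar L}(\mathbb{R}^d)'$. With moments on $\boldsymbol{\zeta}_{\bar j}$ (for the direction where $\bar j$ is the fine scale) and on $\boldsymbol{\varphi}_{\bar k}$ (for the other direction), Lemma \ref{tech} delivers the two-sided weighted decay $|\boldsymbol{\varphi}_{\bar k}*\boldsymbol{\zeta}_{\bar j}(y)|\lesssim 2^{-L|\bar j-\bar k|_1}m_{\bar j,\bar k,L}(y)^{-1}$, after which your assembly goes through verbatim. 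So the missing idea is precisely this construction of a moment-bearing dual family; you also correctly identify the distributional convergence issues in $X_b^{\bar s}(\mathbb{R}^d)'$ as needing the appendix, but the construction itself, not just the topology, is the substantive content you have assumed away.
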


\begin{proof}
Therefore, proving the following equations is sufficient:
\begin{equation}
    \left(\sum_{\bar{j}\in \mathbb{N}^d}2^{q \bar{s} \cdot \bar{j}}
||\boldsymbol{\varphi}_{\bar{j}}*f||_{L_p^{w}}^q\right)^{1/q}
\lesssim
\left(\sum_{\bar{j}\in \mathbb{N}^d}2^{q \bar{s} \cdot \bar{j}}
||\boldsymbol{\psi}_{\bar{j}}*f||_{L_p^{w}}^q\right)^{1/q}
\end{equation}
The inverse inequality can be proved using the same procedure. Because of Lemma.\ref{kernel} in Appendix \ref{appendix_2}, $\boldsymbol{\zeta}_{\bar{j}} \in X_{b}^{\bar{L}+\bar{1}}(\mathbb{R}^d)$ exists such that
\begin{equation}
    f = \sum_{\bar{j} \in \mathbb{N}_0^d} \boldsymbol{\zeta}_{\bar{j}}*\boldsymbol{\psi}_{\bar{j}}*f \qquad \text{in} \quad X_{b}^{\bar{L}}(\mathbb{R}^d)'
\end{equation}
Then,
\begin{eqnarray}
    \boldsymbol{\varphi}_{\bar{k}}*f(x)
    &=&
    \sum_{\bar{j} \in \mathbb{N}_0^d} \boldsymbol{\varphi}_{\bar{k}}* \boldsymbol{\zeta}_{\bar{j}}*\boldsymbol{\psi}_{\bar{j}}*f(x) \nonumber \\
    &=&
    \sum_{\bar{j} \in \mathbb{N}_0^d}
    \int_{\mathbb{R}^d} \boldsymbol{\varphi}_{\bar{k}}* \boldsymbol{\zeta}_{\bar{j}}(y) \boldsymbol{\psi}_{\bar{j}}*f(x-y) dy 
\end{eqnarray}
Let $I(\bar{j},\bar{k})$ be an index set defined by $I(\bar{j},\bar{k}):=\{ i \: |\: k_i \leq j_i\}$.
Define $m_{\bar{j},\bar{k},L}(y)$ by the equation:
\begin{equation}
\label{m}
    m_{\bar{j},\bar{k},L}(y) 
    = 
    \prod_{i \in I(\bar{j},\bar{k})} 2^{-k_i}
    (1+|2^{k_i} y_i|)^{L} e^{2^{k_i}b'|y_i|}
    \prod_{i \in I(\bar{j},\bar{k})^c} 2^{-j_i}
    (1+|2^{j_i} y_i|)^{L} e^{2^{j_i}b'|y_i|}
\end{equation}
for some $c_w/p<b'<b$. 
By Lemma.\ref{tech},
\begin{eqnarray}
    |\boldsymbol{\varphi}_{\bar{k}}*f(x)| &\lesssim&
    \sum_{\bar{j} \in \mathbb{N}_0^d} 2^{-L|\bar{j}-\bar{k}|_1}
    \int_{\mathbb{R}^d}
    \frac{|\boldsymbol{\psi}_{\bar{j}}*f(x-y)|}{m_{\bar{j},\bar{k},L}(y)} dy \nonumber \\
    &=&
    \sum_{\bar{j} \in \mathbb{N}_0^d} 2^{-L|\bar{j}-\bar{k}|_1}
    m_{\bar{j},\bar{k},L}^{-1} * (|\boldsymbol{\psi}_{\bar{j}}*f|)(x)
\end{eqnarray}
Recall the assumption on $w$ given in (\ref{weight_assumption})
\begin{eqnarray}
\label{norm1}
    \|\boldsymbol{\varphi}_{\bar{k}}*f(x)\|_{L^p_w} 
    &\lesssim&
    \sum_{\bar{j} \in \mathbb{N}_0^d} 2^{-L|\bar{j}-\bar{k}|_1}
    \| m_{\bar{j},\bar{k},L}^{-1} * (|\boldsymbol{\psi}_{\bar{j}}*f|) \|_{L^p_w} \nonumber \\
    &\lesssim&
    \sum_{\bar{j} \in \mathbb{N}_0^d} 2^{-L|\bar{j}-\bar{k}|_1}
    \| (m_{\bar{j},\bar{k},L}^{-1}e^{c_w|\cdot|/p}) * (|\boldsymbol{\psi}_{\bar{j}}*f w^{1/p}|) \|_{L^p} \nonumber \\
    &\leq&
    \sum_{\bar{j} \in \mathbb{N}_0^d} 2^{-L|\bar{j}-\bar{k}|_1}
    \| m_{\bar{j},\bar{k},L+1}^{-1}e^{c_w|\cdot|/p} \|_{L^1}
    \|\boldsymbol{\psi}_{\bar{j}}*f w^{1/p} \|_{L^p} \nonumber \\
    &\lesssim&
    \sum_{\bar{j} \in \mathbb{N}_0^d} 2^{-L|\bar{j}-\bar{k}|_1}
    \|\boldsymbol{\psi}_{\bar{j}}*f \|_{L^p_w}
\end{eqnarray}
In the third inequality, the special cases of young inequality $\|g*h\|_p \leq \|g\|_1 \|h\|_p$ ($g \in L^1$, $h \in L^p$) is used. In the final inequality, the boundedness of $\| m_{\bar{j},\bar{k},L}^{-1} e^{c_w|\cdot|_1/p} \|_{L^1}$ derived from following inequality is used.
\begin{eqnarray}
\label{mbound}
\| m_{\bar{j},\bar{k},L+1}^{-1} (\cdot) e^{c_w|\cdot|_1/p} \|_{L^1}
&=&
\int_{\mathbb{R}^d} \frac{e^{c_w|x|_1/p}}{m_{\bar{j},\bar{k},L+1}(x)} dx
\leq
\int_{\mathbb{R}^d} \frac{e^{c_w|2^{\min(\bar{j},\bar{k})}x|_1/p}}{m_{\bar{0},\bar{0},L+1}(x)} dx \nonumber \\
&\leq&
\int_{\mathbb{R}^d} \frac{e^{c_w  |x|_1/p}}{m_{\bar{0},\bar{0},L+1}(x)} dx \nonumber \\
&<& \infty
\end{eqnarray}
where $\min(\bar{j},\bar{k}) = (\min(j_1,k_1),\min(j_2,k_2),\cdots ,\min(j_d,k_d))$ and condition $c_w/p < b'<b$ is used.
Choose $\kappa>0$ so that $\delta(\bar{j}) <  (L-\kappa)|\bar{j}|_1$ for all $\bar{j} \in \mathbb{N}_0^d$.
\begin{eqnarray}
\sum_{\bar{k} \in \mathbb{N}_0^d} 2^{q \delta(\bar{k})}
\|\boldsymbol{\varphi}_{\bar{k}}*f(x)\|_{L^p_w}^q
&\lesssim&
\sum_{\bar{k} \in \mathbb{N}_0^d}
2^{q \delta( \bar{k} )}
\left(\sum_{\bar{j} \in \mathbb{N}_0^d} 2^{-L|(\bar{j}-\bar{k})|_1}
    \|\boldsymbol{\psi}_{\bar{j}}*f \|_{L^p_w} \right)^q \nonumber \\
&\lesssim&
\sum_{\bar{k} \in \mathbb{N}_0^d}
2^{q\delta(\bar{k})}
\sum_{\bar{j} \in \mathbb{N}_0^d} 2^{-q(L-\kappa)|\bar{j}-\bar{k}|_1}
    \|\boldsymbol{\psi}_{\bar{j}}*f \|_{L^p_w}^q \nonumber \\
&\leq&
\sum_{\bar{j} \in \mathbb{N}_0^d} 2^{q \delta(\bar{j})} \|\boldsymbol{\psi}_{\bar{j}}*f \|_{L^p_w}^q 
\sum_{\bar{k} \in \mathbb{N}_0^d}
2^{-q(L-\kappa)|\bar{j}-\bar{k}|_1} 2^{q\delta(\bar{k}-\bar{j})} \nonumber \\
&\lesssim&
\sum_{\bar{j} \in \mathbb{N}_0^d} 2^{q\delta( \bar{j})} \|\boldsymbol{\psi}_{\bar{j}}*f \|_{L^p_w}^q
\end{eqnarray}
\end{proof}

The next result is regarding the maximal operator defined by (\ref{supoperator}). An operator of this type is called the Peetre maximal operator and is highly useful in the analysis of the Besov space. Here, (\ref{supoperator}) is a weighted version of the Peetre maximal operator proposed in \cite{Rychkov01}.
\begin{theorem}
\label{supequi}
$f \in VB_{p,q}^{\delta,w}$. 
Let $0<b'<b$ such that $c_w/p <b'< b$ and  $\varphi_0^{i} \in X_b^{L+1}(\mathbb{R}^d)$ and $\varphi^{i}(\cdot) := \varphi_0^{i}(\cdot)-2^{-1}\varphi_0^{i}(2^{-1}\cdot)$ such that $\int_{\mathbb{R}} \varphi_0^{i} * \varphi_0^{i} (x) dx = 1$ and $(D^s \widehat{\varphi})(0)=0$ for $s=0,1,...,L$ where $L$ is a natural number such that $\delta(\bar{j}) < L|\bar{j}|_1$ for $\bar{j} \in \mathbb{N}_0^d$. Define
$\boldsymbol{\varphi}_{\bar{j}}$ by the following:
\begin{equation}
    \boldsymbol{\varphi}_{\bar{j}}(x)
    := \prod_{i=1}^{d} \varphi_{j_i}^{i}(x_i)
    \quad
    \text{and}
    \quad \varphi_{j_i}^{i}(x_i) := 2^{j_i} \varphi^{i}(2^{j_i} x_i) 
\end{equation}

\begin{equation}
\label{supoperator}
    \boldsymbol{\varphi}_{\bar{j},b',L+1}^{*}f(x)
    :=
    \sup_{y \in \mathbb{R}} 
    \frac{|\boldsymbol{\varphi}_{\bar{j}}*f(x-y)|}
    {\prod_{i=1}^d e^{2^{j_i} b' |y_i|_1}(1+|2^{j_i}y_i|)^{L+1}}
\end{equation}
Then,
\begin{equation}
\label{equi2}
\left(\sum_{\bar{j}\in \mathbb{N}^d}2^{q \delta( \bar{j})}
||\boldsymbol{\varphi}_{\bar{j},b',L+1}^{*}f||_{L_p^{w}}^q\right)^{1/q}
\approx
\left(\sum_{\bar{j}\in \mathbb{N}^d}2^{q \delta( \bar{j})}
||\boldsymbol{\varphi}_{\bar{j}}*f||_{L_p^{w}}^q\right)^{1/q}
\end{equation}
\end{theorem}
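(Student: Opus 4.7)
The direction $\bigl\|\boldsymbol{\varphi}_{\bar{j}}*f\bigr\|_{L^p_w}\leq \bigl\|\boldsymbol{\varphi}_{\bar{j},b',L+1}^{*}f\bigr\|_{L^p_w}$ is immediate by setting $y=0$ inside the supremum defining the Peetre maximal operator, so after taking the weighted $\ell^q$ sum one direction of (\ref{equi2}) is free. The whole content of the theorem therefore lies in the reverse inequality, and my plan is to reuse the argument of Theorem \ref{equivalence} evaluated at a shifted point and then absorb the shift through a coordinatewise supremum estimate.

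Concretely, the same reproducing formula from Lemma \ref{kernel} together with Lemma \ref{tech} that underlies (\ref{norm1}) gives, for \emph{every} $x,y\in\mathbb{R}^d$,
\begin{equation*}
\bigl|\boldsymbol{\varphi}_{\bar{k}}*f(x-y)\bigr|
\lesssim
\sum_{\bar{j}\in\mathbb{N}_0^d} 2^{-L|\bar{j}-\bar{k}|_1}
\int_{\mathbb{R}^d}\frac{|\boldsymbol{\varphi}_{\bar{j}}*f(x-y-z)|}{m_{\bar{j},\bar{k},L}(z)}\,dz.
\end{equation*}
The substitution $u=y+z$ rewrites the inner integral as $\int |\boldsymbol{\varphi}_{\bar{j}}*f(x-u)|/m_{\bar{j},\bar{k},L}(u-y)\,du$. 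Dividing both sides by the Peetre weight $W_{\bar{k}}(y):=\prod_{i=1}^d e^{2^{k_i}b'|y_i|}(1+|2^{k_i}y_i|)^{L+1}$ and taking $\sup_{y}$ then reduces the whole problem to the pointwise bound
\begin{equation*}
\sup_{y\in\mathbb{R}^d}\,
\frac{1}{m_{\bar{j},\bar{k},L}(u-y)\,W_{\bar{k}}(y)}
\;\lesssim\;
\frac{1}{m_{\bar{j},\bar{k},L}(u)}.
\end{equation*}
I would prove this coordinatewise: for each $i$, the factor of $m_{\bar{j},\bar{k},L}(u-y)$ contributes $(1+|2^{\min(j_i,k_i)}(u_i-y_i)|)^L e^{2^{\min(j_i,k_i)}b'|u_i-y_i|}$, while $W_{\bar{k}}(y)$ contributes $(1+|2^{k_i}y_i|)^{L+1}e^{2^{k_i}b'|y_i|}$. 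Using sub-multiplicativity $(1+|a+b|)\leq(1+|a|)(1+|b|)$, the triangle inequality for exponentials, and the monotonicity $(1+|2^{k_i}y_i|)\geq(1+|2^{\min(j_i,k_i)}y_i|)$ (which handles the case $k_i>j_i$), one checks that the product is bounded below by $(1+|2^{\min(j_i,k_i)}u_i|)^L e^{2^{\min(j_i,k_i)}b'|u_i|}$ for every $y_i$, leaving a spare factor $(1+|2^{k_i}y_i|)\geq 1$ that is simply discarded.

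Once the sup estimate is in place, the theorem inherits the remainder of the proof of Theorem \ref{equivalence} unchanged: one is reduced to
\begin{equation*}
\boldsymbol{\varphi}_{\bar{k},b',L+1}^{*}f(x)\;\lesssim\;
\sum_{\bar{j}\in\mathbb{N}_0^d} 2^{-L|\bar{j}-\bar{k}|_1}\,
\bigl(m_{\bar{j},\bar{k},L}^{-1}*|\boldsymbol{\varphi}_{\bar{j}}*f|\bigr)(x),
\end{equation*}
apply $L^p_w$-Young's inequality via the weight condition (\ref{weight_assumption}), use the uniform bound $\|m_{\bar{j},\bar{k},L+1}^{-1}e^{c_w|\cdot|_1/p}\|_{L^1}\lesssim 1$ established in (\ref{mbound}) (which requires $c_w/p<b'$), and finally sum over $\bar{k}$ with weights $2^{q\delta(\bar{k})}$ by exploiting the slack $\kappa>0$ satisfying $\delta(\bar{j})\leq(L-\kappa)|\bar{j}|_1$ together with the triangle inequality $\delta(\bar{k})\leq\delta(\bar{j})+\delta(\bar{k}-\bar{j})$.

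The one genuinely new step is the coordinatewise sup-over-$y$ bound, and the main obstacle is precisely the bookkeeping there: the exponent $L+1$ (rather than $L$) in the polynomial part of the Peetre weight is exactly what makes the comparison go through, since $L$ of those powers are consumed to reproduce $(1+|2^{\min(j_i,k_i)}u_i|)^L$ via sub-multiplicativity and the last one is the slack that removes the $y$-dependence. Everything else is structurally a copy of the proof of Theorem \ref{equivalence}.
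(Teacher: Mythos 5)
Your proposal is correct and follows essentially the same route as the paper: the sup-over-$y$ estimate you isolate as the "genuinely new step" is precisely the paper's inequality (\ref{hoge}), proved there by the same coordinatewise submultiplicativity $(1+2^{l}|x-y-z|)(1+2^{l}|y|)\geq 1+2^{l}|x-z|$ and the exponential triangle inequality, with the scale $2^{\min(j_i,k_i)}$ handled exactly as you do. After that reduction, the paper likewise finishes by repeating the argument of Theorem \ref{equivalence} from (\ref{norm1}) onward, so the two proofs coincide in substance.
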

\begin{proof}
We only prove
\begin{equation}
    \left(\sum_{\bar{j}\in \mathbb{N}^d}2^{q \delta( \bar{j})}
    ||\boldsymbol{\varphi}_{\bar{j},b',L+1}^{*}f||_{L_p^{w}}^q\right)^{1/q}
    \lesssim
    \left(\sum_{\bar{j}\in \mathbb{N}^d}2^{q \delta( \bar{j})}
    ||\boldsymbol{\varphi}_{\bar{j}}*f||_{L_p^{w}}^q\right)^{1/q}
\end{equation}
because inverse inequality can be easily derived from $|\boldsymbol{\varphi}_{\bar{j}}*f(x)| \leq \boldsymbol{\varphi}_{\bar{j},b',L+1}^{*}f(x)$.
Because of Lemma.\ref{kernel}, $\boldsymbol{\zeta}_{\bar{j}} \in X_{b}^{\bar{L}+\bar{1}}(\mathbb{R}^d)$ exists such that
\begin{equation}
    f = \sum_{\bar{j} \in \mathbb{N}_0^d} \boldsymbol{\zeta}_{\bar{j}}*\boldsymbol{\varphi}_{\bar{j}}*f \qquad \text{in} \quad X_{b}^{\bar{L}}(\mathbb{R}^d)'
\end{equation}
Then,

\begin{equation}
    \frac{\boldsymbol{\varphi}_{\bar{k}}*f(x-y)}
    {\prod_{i=1}^d e^{2^{k_i} b' |y_i|_1}(1+|2^{k_i}y_i|)^{L+1}}
    = 
    \sum_{\bar{j} \in \mathbb{N}_0^d}
    \int_{\mathbb{R}^d} dz
    \boldsymbol{\varphi}_{\bar{k}}*\boldsymbol{\zeta}_{\bar{j}}(z)
    \frac{\boldsymbol{\varphi}_{\bar{j}}*f(x-y-z)}
    {
    \prod_{i=1}^d e^{2^{k_i} b' |y_i|}(1+|2^{k_i}y_i|)^{L+1}
    }
\end{equation}
Let $I(\bar{j},\bar{k})$ be an index set defined by $I(\bar{j},\bar{k}):=\{ i :\: k_i < j_i \}$. Let $m_{\bar{j},\bar{k},L}(y)$ by
\begin{equation}
    m_{\bar{j},\bar{k},L}(y) 
    = 
    \prod_{i \in I(\bar{j},\bar{k})} 2^{-k_i}
    (1+|2^{k_i} y_i|)^{L} e^{2^{k_i}b'|y_i|}
    \prod_{i \in I(\bar{j},\bar{k})^c} 2^{-j_i}
    (1+|2^{j_i} y_i|)^{L} e^{2^{j_i}b'|y_i|}
\end{equation}
Here,
\begin{eqnarray}
\label{hoge}
    && m_{\bar{j},\bar{k},L+1}(x-y-z) \prod_{i=1}^d e^{2^{k_i} b' |y_i|}(1+|2^{k_i}y_i|)^{L+1} \nonumber  \\
    &=&
    \prod_{i \in I(\bar{j},\bar{k})} 2^{-k_i}
    (1+|2^{k_i} (x_i-y_i-z_i)|)^{L+1} e^{2^{k_i}b'|x_i-y_i-z_i|} \nonumber \\
    && \qquad \qquad \times
    \prod_{i \in I(\bar{j},\bar{k})^c} 2^{-j_i}
    (1+|2^{j_i} (x_i-y_i-z_i)|)^{L+1} e^{2^{j_i}b'|x_i-y_i-z_i|}
    \prod_{i=1}^d e^{2^{k_i} b' |y_i|}(1+|2^{k_i}y_i|)^{L+1}
    \nonumber \\
    &=&
    \prod_{i \in I(\bar{j},\bar{k})} 2^{-k_i}
    (1+|2^{k_i} (x_i-y_i-z_i)|)^{L+1}  e^{2^{k_i}b'|x_i-y_i-z_i|}
    e^{2^{k_i} b' |y_i|}(1+|2^{k_i}y_i|)^{L+1}
    \nonumber \\
    && \qquad \qquad \times
    \prod_{i \in I(\bar{j},\bar{k})^c} 2^{-j_i}
    (1+|2^{j_i} (x_i-y_i-z_i)|)^{L+1} e^{2^{j_i}b'|x_i-y_i-z_i|}
    e^{2^{j_i} b' |y_i|}(1+|2^{j_i}y_i|)^{L+1}
    \nonumber \\
    &\geq&
    \prod_{i \in I(\bar{j},\bar{k})} 2^{-k_i}
    (1+|2^{k_i} (x_i-z_i)|)^{L+1}  e^{2^{k_i}b'|x_i-z_i|}
    \nonumber \\
    && \qquad \qquad \times
    \prod_{i \in I(\bar{j},\bar{k})^c} 2^{-j_i}
    (1+|2^{j_i} (x_i-z_i)|)^{L+1} e^{2^{j_i}b'|x_i-z_i|}
    \nonumber \\
    &=&
    m_{\bar{j},\bar{k},L+1}(x-z)
\end{eqnarray}
where we use the following relation in the third inequality.
\begin{eqnarray}
    (1+2^{l}|x-y-z|)(1+2^{l}|y|)
    &=&
    1+2^{l}|x-y-z|+2^{l}|y|+2^{2l}|x-y-z||y| \nonumber \\
    &\geq&
    1+2^{l}|x-z|
\end{eqnarray}
By Lemma \ref{tech} in Appendix \ref{appendix_3} and (\ref{hoge}),
\begin{eqnarray}
    \frac{|\boldsymbol{\varphi}_{\bar{k}}*f(x-y)|}
    {\prod_{i=1}^d e^{2^{k_i} b' |y_i|}(1+|2^{k_i}y_i|)^{L+1}}
    &\leq&
    \sum_{\bar{j} \in \mathbb{N}_0^d}
    \int_{\mathbb{R}^d} dz
    |\boldsymbol{\varphi}_{\bar{k}}*\boldsymbol{\zeta}_{\bar{j}}(x-y-z)|
    \frac{|\boldsymbol{\varphi}_{\bar{j}}*f(z)|}
    {
    \prod_{i=1}^d e^{2^{k_i} b' |y_i|}(1+|2^{k_i}y_i|)^{L+1}
    } \nonumber \\
    &\lesssim&
    \sum_{\bar{j} \in \mathbb{N}_0^d}
    \int_{\mathbb{R}^d} dz
    \frac{2^{-L|\bar{j}-\bar{k}|_1}}{m_{\bar{j},\bar{k},L}(x-y-z)}
    \frac{|\boldsymbol{\varphi}_{\bar{j}}*f(z)|}
    {
    \prod_{i=1}^d e^{2^{k_i} b' |y_i|}(1+|2^{k_i}y_i|)^{L+1}
    } \nonumber \\
    &\leq&
    \sum_{\bar{j} \in \mathbb{N}_0^d}
    2^{-L|\bar{j}-\bar{k}|_1}
    \int_{\mathbb{R}^d} dz
    \frac{|\boldsymbol{\varphi}_{\bar{j}}*f(z)|}
    {
    m_{\bar{j},\bar{k},L+1}(x-z)
    } \nonumber \\
    &=&
    \sum_{\bar{j} \in \mathbb{N}_0^d} 2^{-L|\bar{j}-\bar{k}|_1}
    m_{\bar{j},\bar{k},L+1}^{-1} * (|\boldsymbol{\varphi}_{\bar{j}}*f|)(x)
\end{eqnarray}
Thus, we have the following equation:
\begin{equation}
    \boldsymbol{\varphi}_{\bar{j},b',L+1}^{*}f(x)
    \lesssim
    \sum_{\bar{j} \in \mathbb{N}_0^d} 2^{-L|\bar{j}-\bar{k}|_1}
    m_{\bar{j},\bar{k},L+1}^{-1} * (|\boldsymbol{\varphi}_{\bar{j}}*f|)(x)
\end{equation}
The remainder of this proof is similar to that of Theorem.\ref{equivalence} from (\ref{norm1}).
\end{proof}
\begin{remark}
\label{rm}
Polynomial terms appeared in the definitions of $m_{\bar{j} ,\bar{k},L+1}$ in (\ref{m}) and $\boldsymbol{\varphi}_{\bar{j},b',L+1}^{*}$ in (\ref{supoperator}) are not necessary for the proof of theorem.\ref{equivalence} and theorem.\ref{equivalence}. Condition $c_w/p < b$ is essential here. This fact is used in the proof of corollary.\ref{nece}.
\end{remark}

Next corollary is useful to characterize the Besov space by wavelet in Theorem \ref{wav} (ii-b).
\begin{corollary}
\label{nece}
Let $f \in VB_{p,q}^{\delta,w}$, $b,b' > 0$ such that $c_w/p < b' < b$, $\psi_0^{i},\psi^{i}, \varphi_0^{i} \in X_b^{L+1}(\mathbb{R}^d)$ and $\varphi^{i}(\cdot) := \varphi_0^{i}(\cdot)-2^{-1}\varphi_0^{i}(2^{-1}\cdot)$ such that $\int_{\mathbb{R}} \varphi_0^{i} * \varphi_0^{i} (x) dx = 1$, $(D^s \widehat{\psi^i})(0)=0$ and $(D^s \widehat{\varphi^i})(0)=0$ for $s=0,1,...,L$ where $L$ is a natural number such that $\delta(\bar{j}) <  L|\bar{j}|_1$ for all $\bar{j} \in \mathbb{N}_0^d$. Define $\boldsymbol{\psi}_{\bar{j}}$, $\boldsymbol{\varphi}_{\bar{j}}$ by
\begin{equation}
    \boldsymbol{\psi}_{\bar{j}}(x)
    := \prod_{i=1}^{d} \psi_{j_i}^{i}(x_i) 
    \qquad
    \boldsymbol{\varphi}_{\bar{j}}(x)
    := \prod_{i=1}^{d} \varphi_{j_i}^{i}(x_i) 
\end{equation}
\begin{equation}
\label{supoperator}
    \boldsymbol{\psi}_{\bar{j},b'}^{*}f(x)
    :=
    \sup_{y \in \mathbb{R}} 
    \frac{|\boldsymbol{\psi}_{\bar{j}}*f(x-y)|}
    {\prod_{i=1}^d e^{2^{j_i} b' |y_i|}}
\end{equation}

Then, we have the following relation:
\begin{equation}
\label{large2}
\left(\sum_{\bar{j}\in \mathbb{N}^d}2^{q \delta( \bar{j})}
||\boldsymbol{\psi}_{\bar{j},b',L+1}^{*}f||_{L_p^{w}}^q\right)^{1/q}
\lesssim
    \left(\sum_{\bar{j}\in \mathbb{N}^d}2^{q \delta( \bar{j})}
||\boldsymbol{\varphi}_{\bar{j}}*f||_{L_p^{w}}^q\right)^{1/q}
\end{equation}
\end{corollary}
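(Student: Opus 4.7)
The plan is to chain Theorems \ref{equivalence} and \ref{supequi} together and then strengthen the result by removing the polynomial factors from the maximal operator. Since $\psi$ and $\varphi$ both satisfy the hypotheses of Theorem \ref{equivalence} (membership in $X_b^{L+1}$, the integral-one condition after self-convolution, and the vanishing Taylor coefficients at the origin up to order $L$), applying that theorem gives
\begin{equation*}
\left(\sum_{\bar{j}\in \mathbb{N}_0^d} 2^{q\delta(\bar{j})} \|\boldsymbol{\varphi}_{\bar{j}}*f\|_{L_p^w}^q\right)^{1/q} \approx \left(\sum_{\bar{j}\in \mathbb{N}_0^d} 2^{q\delta(\bar{j})} \|\boldsymbol{\psi}_{\bar{j}}*f\|_{L_p^w}^q\right)^{1/q}.
\end{equation*}
Thus it suffices to dominate the polynomial-free Peetre-type maximal $\boldsymbol{\psi}_{\bar{j},b'}^{*}f$ by the $\psi$-convolution norm on the right.

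For this I would re-run the proof of Theorem \ref{supequi} with $\psi$ playing the role of $\varphi$, and invoke Remark \ref{rm} to drop the polynomial weights throughout. Concretely, I would replace the kernel in (\ref{m}) by the pure-exponential kernel
\begin{equation*}
m_{\bar{j},\bar{k}}(y) = \prod_{i \in I(\bar{j},\bar{k})} 2^{-k_i} e^{2^{k_i} b'|y_i|} \prod_{i \in I(\bar{j},\bar{k})^c} 2^{-j_i} e^{2^{j_i} b'|y_i|},
\end{equation*}
apply the polynomial-free analogue of Lemma \ref{tech} to control $|\boldsymbol{\psi}_{\bar{k}}*\boldsymbol{\zeta}_{\bar{j}}|$ by $2^{-L|\bar{j}-\bar{k}|_1} m_{\bar{j},\bar{k}}^{-1}$, and replace the product inequality $(1+2^{l}|x-y-z|)(1+2^{l}|y|) \geq 1+2^{l}|x-z|$ used in (\ref{hoge}) by the exponential triangle inequality $e^{2^{l} b'|x-y-z|} e^{2^{l} b'|y|} \geq e^{2^{l} b'|x-z|}$. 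From there the computation after (\ref{norm1}) carries over unchanged.

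The principal obstacle — and the only delicate point — is the $L^1$ estimate $\|m_{\bar{0},\bar{0}}^{-1} e^{c_w|\cdot|_1/p}\|_{L^1(\mathbb{R}^d)} < \infty$, which now lacks the polynomial cushion that appeared in (\ref{mbound}). This reduces to the integrability of $\prod_i e^{(c_w/p - b')|x_i|}$, which is guaranteed exactly by the strict margin $c_w/p < b'$ assumed in the corollary. This is precisely what Remark \ref{rm} identifies as the essential hypothesis, and it is the only place where the argument differs in spirit from the proof of Theorem \ref{supequi}. Once this integrability is secured, the geometric-series-in-$\bar{k}$ step at the end of the proof of Theorem \ref{equivalence}, together with the choice of $\kappa>0$ with $\delta(\bar{j})<(L-\kappa)|\bar{j}|_1$, closes the argument and delivers (\ref{large2}).
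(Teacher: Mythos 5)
Your overall architecture has a genuine gap, and it sits exactly at the point the corollary is designed to handle. You claim that ``$\psi$ and $\varphi$ both satisfy the hypotheses of Theorem \ref{equivalence}, \ldots the integral-one condition after self-convolution,'' but the corollary imposes $\int_{\mathbb{R}}\varphi_0^i*\varphi_0^i\,dx=1$ only on $\varphi_0^i$; for $\psi$ it assumes nothing beyond $\psi_0^i,\psi^i\in X_b^{L+1}$ and the vanishing moments $(D^s\widehat{\psi^i})(0)=0$, and it does not even assume the dyadic-difference structure $\psi^i(\cdot)=\psi_0^i(\cdot)-2^{-1}\psi_0^i(2^{-1}\cdot)$. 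Both of your steps need more than that: passing from the $\varphi$-norm to the $\psi$-norm via Theorem \ref{equivalence}, and then re-running Theorem \ref{supequi} ``with $\psi$ playing the role of $\varphi$,'' each require Lemma \ref{kernel}'s reproducing formula $f=\sum_{\bar{j}}\boldsymbol{\zeta}_{\bar{j}}*\boldsymbol{\psi}_{\bar{j}}*f$ built from $\psi$, and that construction is exactly what the missing normalization and telescoping structure would be needed for. The asymmetry in the hypotheses is not cosmetic: the corollary is applied in the proof of Theorem \ref{wav}(ii-a) with $\boldsymbol{\psi}_{\bar{k}}$ essentially the reflected wavelet, which has vanishing moments and membership in $X_b^{L+1}$ but does not satisfy a Calder\'on-type normalization, so your route would not cover the intended application.

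The paper's proof sidesteps this by keeping the reproducing formula on the $\varphi$ side, $f=\sum_{\bar{j}}\boldsymbol{\zeta}_{\bar{j}}*\boldsymbol{\varphi}_{\bar{j}}*f$ (which uses only the hypotheses placed on $\varphi$), and then estimating the mixed convolution $\boldsymbol{\psi}_{\bar{k}}*\boldsymbol{\zeta}_{\bar{j}}$ by Lemma \ref{tech}, which requires only the vanishing moments and decay of $\psi$. This gives in one pass $\boldsymbol{\psi}^{*}_{\bar{k},b'}f(x)\lesssim\sum_{\bar{j}}2^{-L|\bar{j}-\bar{k}|_1}\,m_{\bar{j},\bar{k}}^{-1}*\bigl(|\boldsymbol{\varphi}_{\bar{j}}*f|\bigr)(x)$ with the purely exponential kernel, i.e.\ the $\psi$-maximal function is dominated directly by $\varphi$-convolutions, with no intermediate $\psi$-norm equivalence. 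The remainder of your argument is sound and matches the paper: the exponential triangle inequality in place of the polynomial product inequality, the $L^1$ bound of $m_{\bar{0},\bar{0}}^{-1}e^{c_w|\cdot|_1/p}$ resting on the strict margin $c_w/p<b'$ (Remark \ref{rm}), and the final summation with $\kappa>0$ chosen so that $\delta(\bar{j})<(L-\kappa)|\bar{j}|_1$. The fix is therefore structural rather than computational: drop the appeal to Theorem \ref{equivalence} for $\psi$ and run the maximal estimate against the $\varphi$-based decomposition from the start.
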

\begin{proof}
Because of Lemma \ref{kernel}, $\boldsymbol{\zeta}_{\bar{j}} \in X_{b}^{\bar{L}+\bar{1}}(\mathbb{R}^d)$ exists such that we have the following equation:
\begin{equation}
    f = \sum_{\bar{j} \in \mathbb{N}_0^d} \boldsymbol{\zeta}_{\bar{j}}*\boldsymbol{\varphi}_{\bar{j}}*f \qquad \text{in} \quad X_{b}^{\bar{L}}(\mathbb{R})'
\end{equation}

By repeating a similar procedure in the proof of Theorem \ref{supequi}, we obtain the following equation:
\begin{equation}
    \frac{|\boldsymbol{\psi}_{\bar{k}}*f(x-y)|}
    {\prod_{i=1}^d e^{2^{k_i} b' |y_i|}}
    \lesssim
    \sum_{\bar{j} \in \mathbb{N}_0^d} 2^{-L|\bar{j}-\bar{k}|_1}
    m_{\bar{j},\bar{k}}^{-1} * (|\boldsymbol{\varphi}_{\bar{j}}*f|)(x)
\end{equation}
where $m_{\bar{j},\bar{k}}(y)$ is defined by
\begin{equation}
    m_{\bar{j},\bar{k}}(z) 
    = 
    \prod_{i \in I(\bar{j},\bar{k})} 2^{-k_i} e^{2^{k_i}b'|z_i|}
    \prod_{i \in I(\bar{j},\bar{k})^c} 2^{-j_i} e^{2^{j_i}b'|z_i|}
\end{equation}
with $I(\bar{j},\bar{k})$.
Thus, we have the following expression:
\begin{equation}
    \boldsymbol{\psi}_{\bar{j},b'}^{*}f(x)
    \lesssim
    \sum_{\bar{j} \in \mathbb{N}_0^d} 2^{-L|\bar{j}-\bar{k}|_1}
    m_{\bar{j},\bar{k}}^{-1} * (|\boldsymbol{\varphi}_{\bar{j}}*f|)(x)
\end{equation}
Finally, considering Remark \ref{rm} and repeating a similar procedure in the proof of Theorem \ref{equivalence}, we obtain the desired inequality.
\end{proof}

\section{Wavelet characterization}
\label{characwave}
$\quad \; \; \;$ The purpose of this section is to characterize the exponentially weighted Besov space with dominating mixed smoothness. In Theorem \ref{wav}, we obtain the characterizations of $\| f | VB_{p,q}^{\delta,w}(\mathbb{R}^d)\|$ using the coefficients of wavelet expansion $f$. We assume wavelet $\{ \boldsymbol{\psi}_{\bar{j},\bar{m}}\}_{\bar{j} \in \mathbb{N}_0^d, \bar{m} \in \mathbb{Z}^d}$ used here satisfies the following properties.
\begin{assumption}
\label{wav_assumption}
$\boldsymbol{\psi}_{\bar{k},\bar{m}}(x)$ is defined by
    \begin{equation}
        \boldsymbol{\psi}_{\bar{k},\bar{m}}(x)
        := 
        \prod_{i=1}^d
        \psi_{k_i,m_i}^{i}(x_i)
    \end{equation}
    where each $\psi_{k_i,m_i}^{i}(x_i)$ is  $C^{L_i}$ wavelet, that is,
\begin{eqnarray}
    \psi_{0,m_i}^{i}(x_i) &=& \phi(x-m_i) \\
    \psi_{k_i,m_i}^{i}(x_i) &=& 2^{k_i/2}\psi(2^{k_i}x-m_i)
\end{eqnarray}
where $\phi$ is a scaling function and $\psi$ is a wavelet function. Furthermore, $\boldsymbol{\psi}_{\bar{k},\bar{m}}(x)$ satisfy the following three properties:
\begin{enumerate}
    \item $|\partial^{s_i}
    \psi_{k_i,m_i}^{i}(x_i)|
    \leq
    c_{\psi^{i}}
    2^{s_i k_i} 2^{k_i/2}
    \qquad
    0
    \leq
    s_i
    \leq
    L_i$
    \item $\text{supp} \: \psi_{k_i,m_i} \subset \gamma Q_{k_i,m_i}$ where $Q_{k_i,m_i} = [2^{-k_i} m_i,2^{-k_i} (m_i+1))$ and $\gamma$ is some positive constant. 
    \item $\{\boldsymbol{\psi}_{\bar{k},\bar{m}}\}_{\bar{k} \in \mathbb{N}_0^d, \bar{m} \in \mathbb{Z}^d}$ is orthogonal basis in $L^2(\mathbb{R}^d)$
\end{enumerate}
\end{assumption}
Here, the d-times product of Daubechies wavelet satisfies the aforementioned assumption. The Daubechies wavelet is compactly supported and the desired smoothness can be obtained\cite{Daubechies92}. Such d-times product of the Daubechies wavelet is included in $X_{b}^{\bar{L}}(\mathbb{R}^d)$ with an arbitrary finite $\bar{L}$ and $b$.

\begin{theorem}
\label{wav}
Let $\bar{L} \in \mathbb{N}_0^d$, $\bar{s} \in \mathbb{R}^d_{\geq 0}$ such that $\delta(\bar{j}) <  L |\bar{j}|_1$ for all $\bar{j} \in \mathbb{N}_0^d$ and $b \in \mathbb{R}_{>0}$ such that $\max(c_w/(p-1),c_w/p)<b$. $\boldsymbol{\psi}_{\bar{k},\bar{m}} \in X_{b}^{(L,\cdots,L)+\bar{1}}(\mathbb{R}^{d})$ satisfies assumption.\ref{wav_assumption}.
\begin{itemize}
    \item [(i)] If a sequence $\{\lambda \}_{\bar{k},\bar{m}}$ satisfies
    \begin{equation}
    \label{sequence}
        \left(\sum_{\bar{k} \in  \mathbb{N}_0^d} 2^{ q \delta(\bar{k})} \left\| \sum_{\bar{m}\in \mathbb{Z}^d} |\lambda_{\bar{k},\bar{m}}|
            2^{|\bar{k}|_1/2}\boldsymbol{\chi}_{\bar{k},\bar{m}}
            \right \|_{L^p_w}^{q} \right)^{1/q} < \infty
    \end{equation}
    then,
        \begin{itemize}
            \item [(i-a)] The series
            \begin{equation}
            \label{series}
                \sum_{\bar{k}\in \mathbb{N}^d_0} \sum_{\bar{m}\in \mathbb{Z}^d} \lambda_{\bar{k},\bar{m}} \boldsymbol{\psi}_{\bar{k},\bar{m}}
            \end{equation}
            converges in $X_{b}^{\bar{L}+\bar{2}}(\mathbb{R}^d)'$ to some distribution $f$.
            \item [(i-b)]
            $f$ belongs to $VB^{\delta,w}_{p,q}(\mathbb{R}^d)$. Furthermore, we have the following expression:
            \begin{equation}
            \label{large}
            \|f|VB_{p,q}^{\delta,w}(\mathbb{R}^d)\|
            \lesssim 
            \left(\sum_{\bar{k} \in \mathbb{N}_0^d} 2^{q\delta(\bar{k})} \left\| \sum_{\bar{m}\in \mathbb{Z}^d} |\lambda_{\bar{k},\bar{m}}|
            2^{|\bar{k}|_1/2} \boldsymbol{\chi}_{\bar{k},\bar{m}}
            \right \|_{L^p_w}^{q} \right)^{1/q}
            \end{equation}
        \end{itemize}
    \item [(ii)] Let $f \in VB_{p,q}^{\delta,w}(\mathbb{R}^d)$ and $\{ \lambda_{\bar{k},\bar{m}} \}_{\bar{k} \in \mathbb{N}_0^d,\bar{m} \in \mathbb{Z}^d}$ be a sequence defined by the following expression:
\begin{equation}
    \lambda_{\bar{k},\bar{m}} = 
    \langle f,\boldsymbol{\psi}_{\bar{k},\bar{m}} \rangle
\end{equation}
    \begin{itemize}
    \item [(ii-a)] Then, we have the following expression:
            \begin{equation}
                \left(\sum_{\bar{k}\in \mathbb{N}_0^d} 2^{q \delta(\bar{k})} \left\| \sum_{\bar{m}\in \mathbb{Z}^d} |\lambda_{\bar{k},\bar{m}}|
            2^{|\bar{k}|_1/2} \boldsymbol{\chi}_{\bar{k},\bar{m}}
            \right \|_{L^p_w}^{q} \right)^{1/q}
            \lesssim
            ||f|VB_{p,q}^{\delta,w}(\mathbb{R}^d)||
            \end{equation}
    \item [(ii-b)]
            The series (\ref{series}) converges to $f$ in $X_b^{\bar{L}+\bar{2}}(\mathbb{R}^d)'$.
        \end{itemize}
\end{itemize}
\end{theorem}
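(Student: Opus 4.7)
The plan is to treat the four parts of the theorem as two standard wavelet-characterization arguments — a synthesis estimate (i) and an analysis estimate (ii) — tied together by the Peetre maximal operator results of Theorem \ref{supequi} and Corollary \ref{nece}, with kernel-decay estimates of the same type used in the proof of Theorem \ref{equivalence}. Throughout, I will use $\boldsymbol{\varphi}_{\bar{j}}$ from Definition \ref{defVB} as the resolution-of-unity convolver and compare it against the tensor-product wavelet $\boldsymbol{\psi}_{\bar{k},\bar{m}}$ via the almost-diagonal decay
\[
|\boldsymbol{\varphi}_{\bar{j}} * \boldsymbol{\psi}_{\bar{k},\bar{m}}(x)| \lesssim 2^{-L|\bar{j}-\bar{k}|_1}\, 2^{|\bar{k}|_1/2}\, m_{\bar{j},\bar{k},L+1}^{-1}(x - 2^{-\bar{k}}\bar{m}),
\]
which follows from the smoothness and moment conditions of $\boldsymbol{\psi}_{\bar{k},\bar{m}}$, its compact support (Assumption \ref{wav_assumption}), and Lemma \ref{tech}/Lemma \ref{kernel} invoked in the previous section.

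For (i-a), I would test the partial sums of \eqref{series} against an arbitrary $\varphi \in X_b^{\bar{L}+\bar{2}}(\mathbb{R}^d)$ and bound each pairing $|\langle \boldsymbol{\psi}_{\bar{k},\bar{m}}, \varphi \rangle|$ using integration by parts against the vanishing moments of $\boldsymbol{\psi}_{\bar{k},\bar{m}}$ and the exponential decay encoded in the seminorms of $X_b^{\bar{L}+\bar{2}}$; this produces a factor $2^{-(L+1)|\bar{k}|_1} 2^{|\bar{k}|_1/2}$ times the exponential weight, which together with \eqref{sequence} and Hölder is summable. For (i-b) I plug the almost-diagonal kernel estimate above into $\boldsymbol{\varphi}_{\bar{j}}*f$, dominate the resulting sum over $\bar{m}$ by the characteristic-function expression $\sum_{\bar{m}} |\lambda_{\bar{k},\bar{m}}|\, 2^{|\bar{k}|_1/2}\boldsymbol{\chi}_{\bar{k},\bar{m}}$ convolved with a suitable exponentially decaying kernel, then take $\|\cdot\|_{L^p_w}^q$-norms and sum in $\bar{j}$ exactly as in the last display of the proof of Theorem \ref{equivalence}, using $\delta(\bar{j}) < (L-\kappa)|\bar{j}|_1$ to absorb the shift in indices.

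For (ii-a), the central point is to control $|\lambda_{\bar{k},\bar{m}}| = |\langle f, \boldsymbol{\psi}_{\bar{k},\bar{m}}\rangle|$ by the Peetre maximal function $\boldsymbol{\psi}_{\bar{k},b',L+1}^{*}f$ from Corollary \ref{nece}. Using the reproducing identity $f = \sum_{\bar{j}} \boldsymbol{\zeta}_{\bar{j}} * \boldsymbol{\varphi}_{\bar{j}} * f$ in $X_b^{\bar{L}}(\mathbb{R}^d)'$ and the localization of $\boldsymbol{\psi}_{\bar{k},\bar{m}}$ on $\gamma Q_{\bar{k},\bar{m}}$, one gets
\[
|\lambda_{\bar{k},\bar{m}}| \lesssim 2^{-|\bar{k}|_1/2} \inf_{x \in Q_{\bar{k},\bar{m}}} \boldsymbol{\psi}_{\bar{k},b',L+1}^{*}f(x),
\]
so that $\sum_{\bar{m}} |\lambda_{\bar{k},\bar{m}}| 2^{|\bar{k}|_1/2}\boldsymbol{\chi}_{\bar{k},\bar{m}}(x) \lesssim \boldsymbol{\psi}_{\bar{k},b',L+1}^{*}f(x)$; taking $L^p_w$-norms and $\ell^q$ in $\bar{k}$ and applying Corollary \ref{nece} produces the desired bound by $\|f | VB_{p,q}^{\delta,w}\|$. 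For (ii-b) I would combine $L^2$-orthogonality of $\{\boldsymbol{\psi}_{\bar{k},\bar{m}}\}$ with the synthesis estimate (i-a) and a density argument in $X_b^{\bar{L}+\bar{2}}(\mathbb{R}^d)$ to extend the Parseval-type reconstruction from $L^2$ to the distributional level.

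The main obstacle will be the coefficient-to-maximal-function bound in (ii-a): one must carry the exponential weight through the pairing $\langle f, \boldsymbol{\psi}_{\bar{k},\bar{m}} \rangle$, and this is precisely where the condition $c_w/(p-1) < b$ enters via Hölder duality when controlling $w^{1/p}$ and $w^{-1/p}$ on the support of the wavelet, a step that has no unweighted analogue and is the technical reason the assumption on $b$ in Theorem \ref{wav} is strictly stronger than in Theorem \ref{equivalence}. Once this pointwise-to-$L^p_w$ passage is in place, summation over $\bar{k}$ is essentially a repetition of the almost-diagonal summation already performed in the proofs of Theorem \ref{equivalence} and Corollary \ref{nece}.
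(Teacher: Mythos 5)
Your proposal follows essentially the paper's route: (i-a) by pairing partial sums of \eqref{series} against $\phi\in X_b^{\bar{L}+\bar{2}}(\mathbb{R}^d)$ and using H\"older together with \eqref{sequence}; (i-b) by an almost-diagonal kernel estimate followed by the same index-shift summation as in the last display of the proof of Theorem \ref{equivalence}; (ii-a) by writing $\lambda_{\bar{k},\bar{m}}=\widetilde{\boldsymbol{\psi}}_{\bar{k}}*f(2^{-\bar{k}}\bar{m})$, dominating $\sum_{\bar{m}}|\lambda_{\bar{k},\bar{m}}|2^{|\bar{k}|_1/2}\boldsymbol{\chi}_{\bar{k},\bar{m}}$ pointwise by a Peetre-type maximal function and invoking Corollary \ref{nece} and Theorem \ref{equivalence}; (ii-b) by orthonormality combined with (i-a)/(ii-a). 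The one structural difference is in (i-b): the paper does not convolve $\boldsymbol{\varphi}_{\bar{j}}$ directly with the wavelets, but first replaces the quasinorm (via Theorem \ref{equivalence}) by one built from the compactly supported kernels $\boldsymbol{\kappa}^{L+1}_{\bar{j}}$ of Lemma \ref{delta}/Lemma \ref{kappabound}; there the two-sided decay $2^{-(L+1)|\bar{j}-\bar{k}|_1}$ in the direction $j_i<k_i$ comes from the wavelet's vanishing moments through Lemma \ref{support}, and the resulting local averages are handled by the $L^p_w$-boundedness of the local maximal operator. Your variant with the kernel $m_{\bar{j},\bar{k},L+1}^{-1}$ can be made to work, but the displayed almost-diagonal bound is not literally Lemma \ref{tech} and must be established separately in both directions (moments of the wavelet when $j_i<k_i$, moments of $\varphi^i$ and smoothness of the wavelet when $j_i\ge k_i$), which is precisely what Lemma \ref{kappabound} supplies.

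One substantive correction: you locate the use of the hypothesis $\max(c_w/(p-1),c_w/p)<b$ in (ii-a), calling it the main obstacle, but that is not where it is consumed. Your own (ii-a) estimate $|\lambda_{\bar{k},\bar{m}}|\lesssim 2^{-|\bar{k}|_1/2}\inf_{x\in Q_{\bar{k},\bar{m}}}\boldsymbol{\psi}^{*}_{\bar{k},b'}f(x)$ is purely pointwise (no weight appears), and the subsequent passage to $\|f|VB^{\delta,w}_{p,q}\|$ via Corollary \ref{nece} and Theorem \ref{equivalence} only needs $c_w/p<b'<b$. The stronger condition on $b$ is used in (i-a): when you apply the weighted H\"older pairing $|\langle g,\phi\rangle|\le\|g\|_{L^p_w}\,\|\phi\|_{L^{p'}_{w^{-p'/p}}}$, the finiteness of the dual-weighted norm of the test function rests on $|\phi(x)|\lesssim e^{-b|x|_1}$ (Lemma \ref{bounded}) beating the exponential growth of the dual weight, and this is exactly where the paper invokes $c_w/(p-1)<b$. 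Your (i-a) sketch mentions H\"older only in passing; it should make this weighted duality explicit, since that is the step with no unweighted analogue. This is a misplacement rather than a gap that breaks the argument, but as written the justification of (i-a) is incomplete while the extra care you plan for (ii-a) is unnecessary.
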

\begin{proof}
\begin{itemize}
    \item [(i-a)]
    Select $\phi \in X^{\bar{L}+\bar{2}}_{b} (\mathbb{R}^d)$.
    Let $K,K' \subset \mathbb{N}_0^d$ be finite s.t. $K \subset K'$
     and $M,M' \subset \mathbb{Z}_0^d$ be finite s.t. $M \subset M'$. Let $\epsilon$ be an arbitrary positive real number. If $\min_{\bar{m} \in M} |\bar{m}|_1$, $\min_{\bar{k} \in K} |\bar{k}|_1$ are sufficiently large
    \begin{eqnarray}
    &&
    \left|
    |\langle
    \sum_{ \bar{k} \in K}
    \sum_{ \bar{m} \in M}
    \lambda_{\bar{k},\bar{m}}
    \boldsymbol{\psi}_{\bar{k},\bar{m}},\phi
    \rangle|
    -
    |\langle
    \sum_{ \bar{k} \in K'}
    \sum_{ \bar{m} \in M'}
    \lambda_{\bar{k},\bar{m}}
    \boldsymbol{\psi}_{\bar{k},\bar{m}},\phi
    \rangle|
    \right| \nonumber \\
    &\leq&
    \sum_{\bar{k} \in K \backslash K'}
    \left\| \sum_{ \bar{m} \in M \backslash M'}
    \lambda_{\bar{k},\bar{m}}
    \boldsymbol{\psi}_{\bar{k},\bar{m}} \right\|_{L^p_w}
    \| \phi \|_{L^{p'}_{w^{-p'/p}}} \nonumber \\
    &\leq&
    \| \phi \|_{L^{p'}_{w^{-{p'}/p}}}
    \left(
    \sum_{\bar{k} \in K \backslash K'}
    2^{q \delta(\bar{k})}
    \left\| \sum_{ \bar{m} \in M \backslash M'}
    \lambda_{\bar{k},\bar{m}}
    \boldsymbol{\psi}_{\bar{k},\bar{m}} \right\|_{L^p_w}^q \right)^{1/q}
    \left(
    \sum_{\bar{k} \in K \backslash K'}
    2^{-q' \delta(\bar{k})}
    \right)^{1/q'} \nonumber \\
    &\lesssim&
    \| \phi \|_{L^{p'}_{w^{-p'/p}}}
    \left(
    \sum_{\bar{k} \in K \backslash K'}
    2^{-q' \delta(\bar{k})}
    \right)^{1/q'}
    \left(
    \sum_{\bar{k} \in K \backslash K'}
    2^{q \delta(\bar{k})}
    \left\| \sum_{ \bar{m} \in M \backslash M'}
    |\lambda_{\bar{k},\bar{m}}|
    2^{|\bar{k}|_1/2}
    \boldsymbol{\chi}_{\bar{k},\bar{m}}(\cdot/\gamma) \right\|_{L^p_w}^q \right)^{1/q} \nonumber \\
    &\lesssim&
    \| \phi \|_{L^{p'}_{w^{-p'/p}}}
    \left(
    \sum_{\bar{k} \in K \backslash K'}
    2^{-q' \delta(\bar{k})}
    \right)^{1/q'}
    \left(
    \sum_{\bar{k} \in K \backslash K'}
    2^{q \delta(\bar{k})}
    \left\| \sum_{ \bar{m} \in M \backslash M'}
    |\lambda_{\bar{k},\bar{m}}|
    2^{|\bar{k}|_1/2}
    \boldsymbol{\chi}_{\bar{k},\bar{m}} \right\|_{L^p_w}^q \right)^{1/q} \nonumber \\
    &\leq&
    \epsilon
    \end{eqnarray}
    where $1/{q'}=1-1/q$. The term $\| \phi \|_{L^{p'}_{w^{-p'/p}}}$ is bounded because $|\phi(x)| \lesssim e^{-b|x|_1}$ by Lemma.\ref{bounded} and $w^{-p/{p'}}(x) \lesssim e^{c_w p/p' |x|}=e^{c_w (p-1) |x|_1}$.
    In the second inequality, the Hölder inequality for the discrete version is used. In the fourth inequality, Lemma 1.4 in \cite{Rychkov01} is used. Thus, because of the completeness of the real number, the convergence of (\ref{series}) in $X^{L+2}_{b}(\mathbb{R}^d)'$ is proved.
    
    \item [(i-b)] Let $\kappa^{L+1,i}_0$ and $\kappa^{L+1,i}$ be functions given in Lemma \ref{kappabound} in Appendix \ref{appendix_4} for each $i=1,\cdots,d$. Define $\boldsymbol{\kappa}^{L+1}_0$ and $\boldsymbol{\kappa}^{L+1}$ by
    \begin{eqnarray}
    \boldsymbol{\kappa}^{L+1}_{\bar{j}}(x) := \prod_{i=1}^d \kappa^{L+1,i}_{j_i}(x_i)
    \end{eqnarray}
    By theorem.\ref{equivalence},
    \begin{equation}
    \label{equi}
        \|f|MB_{p,q}^{\bar{s},w}(\mathbb{R}^d)\|
         \approx
         \left(\sum_{\bar{j}\in \mathbb{N}^d}2^{q \delta(\bar{j})} ||\boldsymbol{\kappa}_{\bar{j}}*f||_{L_p^{w}}^q\right)^{1/q}
    \end{equation}
    The estimate of $||\boldsymbol{\kappa}_{\bar{j}}*f||_{L_p^{w}}$ is given by the following expression:
    \begin{eqnarray}
    ||\boldsymbol{\kappa}_{\bar{j}}*f||_{L_p^{w}}
    &\leq&
    \sum_{\bar{k} \in \mathbb{N}_0^d}
    \left\|\boldsymbol{\kappa}_{\bar{j}}*\left(
    \sum_{\bar{m}\in \mathbb{Z}^d} \lambda_{\bar{k},\bar{m}} \boldsymbol{\psi}_{\bar{k},\bar{m}}
    \right)\right\|_{L_p^{w}}
    \nonumber \\
    &\lesssim&
    \sum_{\bar{k} \in \mathbb{N}_0^d}
    2^{-(L+1)|\bar{j}-\bar{k}|_1}
    \left\| 2^{|\bar{j}|_1} 2^{|\bar{k}|_1/2}
    \int_{c_{L+1}Q_{\bar{j}}} \sum_{\bar{m}\in \mathbb{Z}^d} |\lambda_{\bar{k},\bar{m}}| \chi_{\bar{k},\bar{m}}((\cdot-y)/\gamma)dy
    \right\|_{L_p^{w}} \nonumber \\
    &\lesssim&
    \sum_{\bar{k} \in \mathbb{N}_0^d}
    2^{-(L+1)|\bar{j}-\bar{k}|_1}
    \left\|
    \sum_{\bar{m}\in \mathbb{Z}^d} |\lambda_{\bar{k},\bar{m}}| 
    2^{|\bar{k}|_1/2}
    \boldsymbol{\chi}_{\bar{k},\bar{m}} \right\|_{L_p^{w}}
    \end{eqnarray}
where $\gamma$ is a constant appeared in assumption.\ref{wav_assumption}
In the second inequality Lemma \ref{kappabound} in Appendix \ref{appendix_4} is used. In the third inequality, the boundedness of the local maximal operator in the norm of $L^p_w$ (Lemma 2.11 in \cite{Rychkov01}) is used.
Thus, we have the following equation:
\begin{eqnarray}
\label{larger}
\sum_{\bar{j}\in \mathbb{N}^d}2^{q \delta(\bar{j})} ||\boldsymbol{\kappa}_{\bar{j}}*f||_{L_p^{w}}^q
&\lesssim&
\sum_{\bar{j}\in \mathbb{N}^d}2^{q \delta(\bar{j})} 
\left(
\sum_{\bar{k} \in \mathbb{N}_0^d}
2^{-(L+1)|\bar{j}-\bar{k}|_1}
    \left\|
    \sum_{\bar{m}\in \mathbb{Z}^d} |\lambda_{\bar{k},\bar{m}}| 
    2^{|\bar{k}|_1/2}
    \boldsymbol{\chi}_{\bar{k},\bar{m}} \right\|_{L_p^{w}}  \right)^q \nonumber  \\
&\leq&
\sum_{\bar{j}\in \mathbb{N}^d}2^{q \delta(\bar{j})} 
\sum_{\bar{k} \in \mathbb{N}_0^d}
2^{-q(L+1-\epsilon)|\bar{j}-\bar{k}|_1}
    \left\|
    \sum_{\bar{m}\in \mathbb{Z}^d} |\lambda_{\bar{k},\bar{m}}| 
    2^{|\bar{k}|_1/2}
    \boldsymbol{\chi}_{\bar{k},\bar{m}}\right\|_{L_p^{w}}^q 
    \left(
    \sum_{\bar{k} \in \mathbb{N}_0^d}
    2^{-q' \epsilon |\bar{j}-\bar{k}|_1}
    \right)^{q/q'} \nonumber \\
    &\lesssim&
    \sum_{\bar{k} \in \mathbb{N}_0^d}
    2^{q \delta(\bar{k})} 
    \left\|
    \sum_{\bar{m}\in \mathbb{Z}^d} |\lambda_{\bar{k},\bar{m}}| 
    2^{|\bar{k}|_1/2}
    \boldsymbol{\chi}_{\bar{k},\bar{m}}\right\|_{L_p^{w}}^q
    \sum_{\bar{j} \in \mathbb{N}_0^d}
    2^{-q(L+1-\epsilon)|\bar{j}-\bar{k}|_1 + \delta(\bar{j}-\bar{k})} \nonumber \\
    &\lesssim&
    \sum_{\bar{k} \in \mathbb{N}_0^d}
    2^{q \delta(\bar{k})} 
    \left\|
    \sum_{\bar{m}\in \mathbb{Z}^d} |\lambda_{\bar{k},\bar{m}}| 2^{|\bar{k}|_1/2}\boldsymbol{\chi}_{\bar{k},\bar{m}}\right\|_{L_p^{w}}^q < \infty
\end{eqnarray}
From (\ref{equi}) and (\ref{larger}), $f \in VB^{\delta,w}_{p,q}(\mathbb{R}^d)$ and desired inequality (\ref{large}) are proved.
\item[(ii-a)]
Let $\widetilde{\boldsymbol{\psi}_{\bar{k}}}(x) := \boldsymbol{\psi_{\bar{k}, \bar{0}}}(-x)$. Note that
    \begin{eqnarray}
        \lambda_{\bar{k},\bar{m}} &=&  \int_{\mathbb{R}^d} f(x) 2^{|\bar{k}|_1/2} \psi (2^{\bar{k}}x- \bar{m})dx \nonumber \\
        &=&
         \int_{\mathbb{R}^d} f(x)  \widetilde{\psi}_{\bar{k}} (2^{-\bar{k}} \bar{m}-x)dx \nonumber \\
        &=& f*\widetilde{\psi}_{\bar{k}} (2^{-\bar{k}} \bar{m})
    \end{eqnarray}
    Then,
    \begin{eqnarray}
    &&\sum_{\bar{k}} 2^{q\delta(\bar{k})} \left\| \sum_{\bar{m}\in \mathbb{Z}^d} |\lambda_{\bar{k},\bar{m}}|
            2^{|\bar{k}|_1/2}
            \chi_{\bar{k},\bar{m}}
            \right \|_{L^p_w}^{q} \nonumber \\
    &=&
    \sum_{\bar{k} \in \mathbb{N}_0^d}
    2^{q \delta(\bar{k})}
    \left\|
    \sum_{\bar{m} \in \mathbb{Z}^d}
     |\widetilde{\boldsymbol{\psi}}_{\bar{k}}*f(2^{-\bar{k}}\bar{m})|
    2^{|\bar{k}|_1/2}
    \chi_{\bar{k},\bar{0}}(\cdot - 2^{-\bar{k}}\bar{m}) 
    \right\|_{L^p_w}^q \nonumber \\
    &\lesssim&
    \sum_{\bar{k} \in \mathbb{N}_0^d}
    2^{q \delta(\bar{k})}
    \left\|
    \sum_{\bar{m} \in \mathbb{Z}^d} |\widetilde{\boldsymbol{\psi}}_{\bar{k}}*f(2^{-\bar{k}}\bar{m})|
    2^{ |\bar{k}|_1/2} \chi_{\bar{k},\bar{0}}(\cdot - 2^{-\bar{k}}\bar{m}) e^{-b|2^{\bar{k}} \cdot - \bar{m}|_1}
    \right\|_{L^p_w}^q \nonumber \\
    &\lesssim&
    \sum_{\bar{k} \in \mathbb{N}_0^d}
    2^{q \delta(\bar{k})}
    \left\|
    \sup_{y \in \mathbb{R}^d}
    \left(
    \frac{2^{|\bar{k}|_1/2}|\widetilde{\boldsymbol{\psi}}_{\bar{k}}*f(y)|}{e^{b|2^{\bar{k}}(x-y)|_1}}
    \right)
    \right\|_{L^p_w}^q
    \end{eqnarray}
    
    By Theorem \ref{equivalence} and Collorary \ref{nece}
    \begin{eqnarray}
        \left(\sum_{\bar{k}} 2^{q \delta( \bar{k}) -q|\bar{k}|_1/2} \left\| \sum_{\bar{m}\in \mathbb{Z}^d} |\lambda_{\bar{k},\bar{m}}|
            2^{|\bar{k}|_1/2} \chi_{\bar{k},\bar{m}}
            \right \|_{L^p_w}^{q} \right)^{1/q}
        &\lesssim&
        \left(\sum_{\bar{k} \in \mathbb{N}_0^d}
    2^{q \delta(\bar{k})}
    \left\|
    \boldsymbol{\varphi}_{\bar{k}}*f
    \right\|_{L^p_w}^q\right)^{1/q} \nonumber \\
    &\approx&
    ||f|VB_{p,q}^{\delta,w}(\mathbb{R}^d)||
    \end{eqnarray}
    with some $\varphi_0^{i} \in X_b^{L+1}(\mathbb{R}^d)$ and $\varphi^{i}(\cdot) := \varphi_0^{i}(\cdot)-2^{-1}\varphi_0^{i}(2^{-1}\cdot)$ such that $(D^s \widehat{\varphi})(0)=0$ for $s=0,1,...,L$.
    \item[(ii-b)]
    Let $h$ be a distribution defined by
    \begin{equation}
        h 
        := 
        \sum_{\bar{m} \in \mathbb{N}_0^d}
        \sum_{\bar{k} \in \mathbb{Z}^d}
        \lambda_{\bar{k},\bar{m}}
        \boldsymbol{\psi}_{\bar{k},\bar{m}}
    \end{equation}
    The convergence of $h$ in $X_{b}^{\bar{L}+\bar{2}}(\mathbb{R}^d)$is assured from (i-a) and (ii-a).
    
    Select $\phi \in X_{b}^{\bar{L}+\bar{2}}(\mathbb{R}^d)$ arbitrary, and $\phi$ has the following expansion:
    \begin{equation}
        \phi 
        := 
        \sum_{\bar{k} \in \mathbb{N}_0^d}
        \sum_{\bar{m} \in \mathbb{Z}^d}
        2^{|\bar{k}|_1/2} 
        \langle \phi, \boldsymbol{\psi}_{\bar{k},\bar{m}} \rangle
        \boldsymbol{\psi}_{\bar{k},\bar{m}}
    \end{equation}
    By orthonormality of $\boldsymbol{\psi}_{\bar{k},\bar{m}}$
    \begin{eqnarray}
        \langle
        h,
        \phi
        \rangle 
        &=&
        \sum_{\bar{k} \in \mathbb{N}_0^d}
        \sum_{\bar{m} \in \mathbb{Z}^d}
        \langle \phi, \boldsymbol{\psi}_{\bar{k},\bar{m}} \rangle
        \lambda_{\bar{k},\bar{m}}
        \nonumber \\
        &=&
        \sum_{\bar{k} \in \mathbb{N}_0^d}
        \sum_{\bar{m} \in \mathbb{Z}^d}
        2^{|\bar{k}|_1/2} 
        \langle \phi, \boldsymbol{\psi}_{\bar{k},\bar{m}} \rangle
        \langle f, \boldsymbol{\psi}_{\bar{k},\bar{m}} \rangle
        \nonumber \\
        &=&
        \langle f,
        \sum_{\bar{k} \in \mathbb{N}_0^d}
        \sum_{\bar{m} \in \mathbb{Z}^d}
        2^{|\bar{k}|_1/2} 
        \langle \phi, \boldsymbol{\psi}_{\bar{k},\bar{m}} \rangle
        \boldsymbol{\psi}_{\bar{k},\bar{m}} \rangle \nonumber \\
        &=&
        \langle f, \phi \rangle
    \end{eqnarray}
    Then,
    \begin{equation}
        f = \sum_{\bar{m} \in \mathbb{N}_0^d}
        \sum_{\bar{k} \in \mathbb{Z}^d}
        \lambda_{\bar{m},\bar{k}}
        \boldsymbol{\psi}_{\bar{k},\bar{m}}
        \qquad
        \text{in}
        \quad
        X_{b}^{\bar{L}+\bar{2}}(\mathbb{R}^d)'
    \end{equation}
\end{itemize}
\end{proof}

\section{Interpolation on \texorpdfstring{$VB^{\delta,w}_{p,q}(\mathbb{R}^d)$}{LG}}
\label{interpo}
$\quad \; \; \;$ Here, the generalization of the Besov space with respect to smoothness as $VB^{\delta,w}_{p,q}(\mathbb{R}^d)$ is detailed in this section. First, we evaluate $VB^{\delta,w}_{p,q}(\mathbb{R}^d)$ is equal to $B_{p,q}^{\bar{s},w}(\mathbb{R}^d)$ or $MB_{p,q}^{s,w}(\mathbb{R}^d)$ with some $\delta$. When $\delta(\bar{j}) = s_1 j_1 + \cdots + s_d j_d$, $VB^{\delta,w}_{p,q}(\mathbb{R}^d) \approx MB_{p,q}^{s,w}(\mathbb{R}^d)$ is obvious from definition. However, $\delta$ that leads to $VB^{\delta,w}_{p,q}(\mathbb{R}^d) \approx B_{p,q}^{s,w}(\mathbb{R}^d)$ is not obvious. In the first subsection, we provide a proof of $VB^{sl_{\infty},w}_{p,q}(\mathbb{R}^d) \approx B_{p,q}^{s,w}(\mathbb{R}^d)$. In the next subsection, we prove the interpolation formula for $VB^{\delta, w}_{p,q}(\mathbb{R}^d)$. This interpolation formula allows us to consider the Besov space whose smoothness is between the normal Besov space and Besov space with mixed smoothness.

\subsection{\texorpdfstring{Equivalence of Besov spaces $VB^{sl_{\infty},w}_{p,q}(\mathbb{R}^d) \approx B_{p,q}^{s,w}(\mathbb{R}^d)$}{Lg}
}
\label{infty_norm}

\begin{lemma}
\label{new_old}
If $q \geq 1$, then
\begin{equation}
VB^{s l_{\infty},w}_{p,q}(\mathbb{R}^d)
    \approx
B_{p,q}^{s,w}(\mathbb{R}^d)
\end{equation}
\end{lemma}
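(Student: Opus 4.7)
The strategy is to transport the kernel-decomposition scheme used in the proof of Theorem \ref{equivalence} to the present setting, now comparing the scalar, isotropic system $\{\Psi_n\}_{n \geq 0}$ underlying $B^{s,w}_{p,q}(\mathbb{R}^d)$ with the rectangular product system $\{\boldsymbol{\psi}_{\bar{j}}\}_{\bar{j}\in\mathbb{N}_0^d}$ underlying $VB^{sl_\infty,w}_{p,q}(\mathbb{R}^d)$. The geometric input that pins the smoothness indices together is that $|\xi|_\infty \leq |\xi|_2 \leq \sqrt{d}\,|\xi|_\infty$: the isotropic scale $2^n$ matches the anisotropic product scale whose largest component is $2^{|\bar{j}|_\infty} \approx 2^n$, which is precisely why the weight $2^{s|\bar{j}|_\infty}$ (and not $2^{s|\bar{j}|_1}$ or $2^{\bar{s}\cdot\bar{j}}$) is the correct one.

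First, I would invoke Lemma \ref{kernel} to obtain the rectangular local reproducing formula $f = \sum_{\bar{j} \in \mathbb{N}_0^d} \boldsymbol{\zeta}_{\bar{j}} * \boldsymbol{\psi}_{\bar{j}} * f$ in $X^{\bar{L}}_b(\mathbb{R}^d)'$ together with its one-scale analogue $f = \sum_{n \geq 0} Z_n * \Psi_n * f$, where $\boldsymbol{\zeta}_{\bar{j}}, Z_n \in X^{\bar{L}+\bar{1}}_b(\mathbb{R}^d)$. For the direction $\|f|B^{s,w}_{p,q}(\mathbb{R}^d)\| \lesssim \|f|VB^{sl_\infty,w}_{p,q}(\mathbb{R}^d)\|$, I would substitute the rectangular formula into $\Psi_n*f$ to write
\begin{equation*}
\Psi_n * f \;=\; \sum_{\bar{j}\in\mathbb{N}_0^d} (\Psi_n * \boldsymbol{\zeta}_{\bar{j}}) * (\boldsymbol{\psi}_{\bar{j}} * f),
\end{equation*}
and then bound the kernel, in the spirit of Lemma \ref{tech}, by
\begin{equation*}
|\Psi_n * \boldsymbol{\zeta}_{\bar{j}}(y)| \;\lesssim\; 2^{-L\,|n - |\bar{j}|_\infty|}\, m_{n,\bar{j},L+1}(y)^{-1},
\end{equation*}
where $m_{n,\bar{j},L+1}$ is a Peetre-type product weight localized at the coarser of the two scales in each coordinate. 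Carrying this into the $L^p_w$ estimate, via the weighted Young inequality exactly as in (\ref{norm1}), yields $\|\Psi_n*f\|_{L^p_w} \lesssim \sum_{\bar{j}} 2^{-L\,|n-|\bar{j}|_\infty|}\,\|\boldsymbol{\psi}_{\bar{j}}*f\|_{L^p_w}$.

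To finish, I would raise to the $q$-th power (using $q\geq 1$ so Minkowski/Jensen applies), multiply by $2^{snq}$, sum over $n$, pick $\kappa>0$ with $s<L-\kappa$, and apply the same discrete-convolution estimate used in the last display of the proof of Theorem \ref{equivalence}, namely $\sum_n 2^{-q(L-\kappa)|n-|\bar{j}|_\infty|} 2^{snq} \lesssim 2^{s|\bar{j}|_\infty q}$. The reverse inequality is obtained symmetrically from $\boldsymbol{\psi}_{\bar{j}}*f = \sum_n \boldsymbol{\psi}_{\bar{j}}*Z_n*\Psi_n*f$ and an analogous kernel bound, now grouping the sum over $n$ by whether $n \gtreqless |\bar{j}|_\infty$. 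The main obstacle is the kernel estimate for $\Psi_n * \boldsymbol{\zeta}_{\bar{j}}$, because the two factors carry different geometries: when $n \geq |\bar{j}|_\infty$ one exploits the full isotropic moment conditions on $\Psi_n$ against the smoothness of $\boldsymbol{\zeta}_{\bar{j}}$ at its coarsest scale $2^{-|\bar{j}|_\infty}$, while for $n < |\bar{j}|_\infty$ one must use the one-dimensional moment conditions of $\boldsymbol{\zeta}_{\bar{j}}$ along any direction $i$ with $j_i > n$ together with the smoothness of $\Psi_n$, and track the product decay in the remaining directions carefully so that the resulting exponent is $|n-|\bar{j}|_\infty|$ rather than some larger quantity such as $|n-|\bar{j}|_1|$ that would fail to yield a convergent sum.
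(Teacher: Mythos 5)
Your toolkit (Lemma \ref{kernel}, Lemma \ref{tech}-type kernel bounds, weighted Young, then a discrete-convolution summation as in Theorem \ref{equivalence}) is the same as the paper's, but the way you set up the comparison misses the key idea and contains a gap that the machinery cannot repair. The paper never convolves a genuinely isotropic kernel against the rectangular system: it \emph{defines} $\Psi_k:=\sum_{|\bar j|_\infty=k}\boldsymbol{\psi}_{\bar j}$ and uses the telescoping identity (\ref{minus}) to recognize this as the admissible dilation family $2^{dk}\Psi_0(2^k\cdot)-2^{d(k-1)}\Psi_0(2^{k-1}\cdot)$ built from the tensor kernel $\Psi_0=\prod_i\psi_0^i$. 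The whole comparison is then between two rectangular product systems, the general index $\bar j$ and the diagonal index $\bar k_0=(k,\dots,k)$, so the one-dimensional Lemma \ref{tech} applies coordinatewise and the cross kernels decay like $2^{-L|\bar j-\bar k_0|_1}$, an $\ell^1$-distance to the diagonal whose product structure makes $\sum_{\bar j}2^{-c|\bar j-\bar k_0|_1}$ bounded uniformly in $k$.

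Your central claim, $|\Psi_n*\boldsymbol{\zeta}_{\bar j}(y)|\lesssim 2^{-L|n-|\bar j|_\infty|}m_{n,\bar j,L+1}(y)^{-1}$, gives decay only in the difference of the $\ell^\infty$ levels, and this does not separate the roughly $m^{d-1}$ indices $\bar j$ on a shell $\{|\bar j|_\infty=m\}$. That is fatal at the summation step: after H\"older in $n$ and swapping sums you need $\sum_{\bar j}2^{qs|\bar j|_\infty}2^{-q(L-\kappa)|n-|\bar j|_\infty|}\lesssim 2^{qsn}$, but grouping by shells this sum is $\approx\sum_m m^{d-1}2^{qsm-q(L-\kappa)|n-m|}\approx n^{d-1}2^{qsn}$, a polynomial loss that cannot be absorbed into the exponential weights (test your chain of inequalities with $\|\Psi_n*f\|_{L^p_w}$ concentrated at a single scale $N$: it outputs $N^{d-1}2^{qsN}$ where only $2^{qsN}$ is permitted); the symmetric direction loses the same factor. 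Nor can the kernel estimate be sharpened to fix this: for $\bar j$ on the shell with some small coordinates, e.g.\ $\bar j=(n,0,\dots,0)$, the Fourier transforms of an isotropic $\Psi_n$ and of $\boldsymbol{\zeta}_{\bar j}$ genuinely overlap on $\{|\xi_1|\approx2^n,\ |\xi_i|\lesssim1\ (i\geq2)\}$, so $\Psi_n*\boldsymbol{\zeta}_{\bar j}$ is of size $O(1)$ and no intra-shell decay is available from an isotropic-against-rectangular convolution. Your closing remark is therefore backwards: $|n-|\bar j|_\infty|$ is not the ``correct'' exponent but an insufficient one; what is needed is decay in a quantity distinguishing points of the same shell, namely $|\bar j-\bar k_0|_1$, and the paper obtains it precisely by the structural trick (\ref{minus}) (shell sums telescoping to a tensor dilation family with coordinatewise moment conditions), not by estimating $\Psi_n*\boldsymbol{\zeta}_{\bar j}$ directly.
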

\begin{proof}
We have following $\Psi_{k}$ and relation (\ref{minus}). The remaining part of the proof is highly similar to theorem.\ref{equivalence}. 
Define $\Psi_{k}$ as follows:
\begin{equation*}
    \Psi_{k}(x) := \sum_{|\bar{j}|_{\infty}=k} \boldsymbol{\psi}_{\bar{j}}(x)
\end{equation*}
where $\boldsymbol{\psi}_{\bar{j}}$ is given in (\ref{bolpsi}). The following equality holds if $k\geq 1$.
\begin{eqnarray}
\label{minus}
   \Psi_{k}(x) &=& 
   \prod_{i=1}^{d} \left\{ \sum_{j_i=0}^k \psi_{j_i}^{i}(x_i) \right\}
   -
   \prod_{i=1}^{d} \left\{ \sum_{j_i=0}^{k-1} \psi_{j_i}^{i}(x_i) \right\} \nonumber \\
   &=&
   \prod_{i=1}^{d} 2^{k} \psi_0^{i}(2^{k}x_i)
   -
   \prod_{i=1}^{d} 2^{k-1} \psi_0^{i}(2^{k-1}x_i) \nonumber \\
   &=&
   2^{dk} \Psi_{0}(2^k x) - 2^{d(k-1)} \Psi_{0}(2^{k-1} x)
\end{eqnarray}
As for the moment condition, $D^{\bar{s}} \widehat{\Psi}_{k}(0) = 0$ if $k \geq 1$ and $|\bar{s}|_{\infty} \leq L$. To prove $VB^{sl_{\infty}}_{p,q} \approx B^{s,w}_{p,q}$, we only show
\begin{equation}
    \left(\sum_{\bar{j} \in \mathbb{N}_0^d}2^{qsl_{\infty}(\bar{j})}
    \| \psi_{\bar{j}}*f \|_{L^p_w}^q \right)^{1/q}
    \approx
    \left( \sum_{k =0}^{\infty} 2^{qsk} \|  \Psi_{k}*f \|_{L^p_w}^q \right)^{1/q}
\end{equation}
First, we prove the following expression:
\begin{equation}
    \left(\sum_{\bar{j} \in \mathbb{N}_0^d}2^{qsl_{\infty}(\bar{j})}
    \| \psi_{\bar{j}}*f \|_{L^p_w}^q \right)^{1/q}
    \lesssim
    \left( \sum_{k =0}^{\infty} 2^{qsk} \|  \Psi_{k}*f \|_{L^p_w}^q \right)^{1/q}
\end{equation}
By lemma.\ref{kernel}, $\{\Phi_k \}_{k=0}^{\infty}$ exists such that:
\begin{equation}
    f = \sum_{k=0}^{\infty} \Phi_k * \Psi_k*f
\end{equation}
Let $\bar{k}_0 = (k,k,\cdots,k) \in \mathbb{N}_0^d$. Repeat the same discussion in the proof of theorem.\ref{equivalence} to obtain
\begin{equation}
    |\psi_{\bar{j}}*f(x)|
    \lesssim
    \sum_{k =0}^{\infty} 2^{-L|\bar{j}-\bar{k}_0|_1} m_{\bar{j},\bar{k}_0,L+1}^{-1}*(|\Psi_k * f|)(x)
\end{equation}
The remainder of this proof is the same as that of theorem.\ref{equivalence} wherein we put $\delta = s l_{\infty}$. We obtain
\begin{equation}
    \sum_{\bar{j} \in \mathbb{N}_0^d}2^{qsl_{\infty}(\bar{j})}
    \| \psi_{\bar{j}}*f \|_{L^p_w}^q
    \lesssim
    \sum_{k =0}^{\infty} 2^{qsk} \|  \Psi_{k}*f \|_{L^p_w}^q
\end{equation}
This is the desired inequality.

On the other hand, the following inverse inequality is proved in the same procedure.
\begin{equation}
    \left( \sum_{k =0}^{\infty} 2^{qsk} \|  \Psi_{k}*f \|_{L^p_w}^q \right)^{1/q}
    \lesssim
    \left(\sum_{\bar{j} \in \mathbb{N}_0^d}2^{qsl_{\infty}(\bar{j})}
    \| \psi_{\bar{j}}*f \|_{L^p_w}^q \right)^{1/q}
\end{equation}
\end{proof}

\subsection{Real interpolation}
$\quad \; \; \;$ To obtain the intermediate space between normal Besov space and mixed Besov space, real interpolation is used. Here, $K$-method typically are used to obtain simple expressions of that intermediate space. We begin with the definition of $K$-method and some remarks necessary for the proof of Lemma \ref{interpolation}.

\begin{define} \cite{Bergh76}
    \label{realInterpolation}
Let $(X_0,X_1)$ be a compatible couple of quasi-Banach spaces, and let $\theta$,$p$ satisfy $0<\theta <1$, $0<p\leq \infty$
\begin{enumerate}
\item
For $x=x_0+x_1\in X_0+X_1$

\begin{equation*}
    \|x\|_{(X_0,X_1)_{\theta,p}}:=\left(\int_0^\infty
    (t^{-\theta}K(t,x,X_1,X_2))^p\frac{dt}{t})\right)^{\frac{1}{p}}
\end{equation*}
where

\begin{equation*}
    K(t,x,X_1,X_2):=\inf\{\|x_0\|_{X_0}+t\|x_1\|_{X_1}:x_0\in X_0,x_1\in X_1, x=x_1+x_0\}
\end{equation*}
Inferior takes all combinations of $x=x_1+x_0$($x_0\in X_0$,$x_1\in X_1$)
\item
The real interpolation quasi-Banach spaces $(X_0,X_1)_{\theta,p}$ is the subspace of $X_0+X_1$ defined by the following expression:
\begin{equation}
(X_0,X_1)_{\theta,p}:=\{x\in X_0+X_1:\|x\|_{(X_0,X_1)_{\theta,p}}<\infty\}
\end{equation}
\end{enumerate}
\end{define}

\begin{remark}
\label{mono}
\begin{enumerate}
\item[1.]
Function $K(t,x,X_1,X_2)$ is a monotone increasing function with respect to $t$. Assume $t_0 < t$. By the definition of $\inf$, for arbitrary $\epsilon>0$, $x_1 ^{(t)} \in X_1$, $x_2^{(t)} \in X_2$ exists such that $x = x_1^{(t)} + x_2^{(t)}$ and 
\begin{equation}
    \| x_1  \|_{X_1} + t \| x_2  \|_{X_2}
    \leq
    K(t,x,X_1,X_2) + \epsilon
\end{equation}
Then, we have the following expression:
\begin{eqnarray}
    K(t_0,x,X_1,X_2) 
    &\leq& \| x_1^{(t)} \|_{X_1}  +t_0 \| x_2^{(t)} \|_{X_2}\nonumber \\
    &\leq& \| x_1^{(t)} \|_{X_1}  +t \| x_2^{(t)} \|_{X_2}  \nonumber \\
    &\leq&
     K(t,x,X_1,X_2) + \epsilon
\end{eqnarray}
Because $\epsilon>0$ is arbitrary
\begin{equation}
    K(t_0,x,X_1,X_2) \leq K(t,x,X_1,X_2)
\end{equation}
\item[2.] If $0 < K(t,f,X_1,X_2)$, for $epsilon>0$ such that $\epsilon < K(t,f,X_1,X_2)$, $x_1^{(t)} \in X_1$ and $x_2^{(t)} \in X_2$ exist such that $x = x_1^{(t)} + x_2^{(t)}$ and
\begin{eqnarray}
    \| x_1^{(t)} \|_{X_1}  +t \| x_2^{(t)} \|_{X_2} 
    &\leq& K(t,x,X_1,X_2) + \epsilon \nonumber \\
    &\leq& 2K(t,x,X_1,X_2)
\end{eqnarray}
\end{enumerate}
\end{remark}

The following lemma is the interpolation result between $VB^{\delta_1,w}_{p,q}(\mathbb{R}^d)$ and $VB^{\delta_2,w}_{p,q}(\mathbb{R}^d)$($\delta_1 < \delta_2$). 
\begin{lemma}
\label{interpolation}
Let $\delta_1$ and $\delta_2$ be norms such that $\delta_1 \geq \delta_2$ For $0<\theta<1$, we have the following expression:
\begin{equation}
    \left( VB_{p,q}^{\delta_1}(\mathbb{R}^d), VB_{p,q}^{\delta_2}(\mathbb{R}^d)\right)_{q,\theta} \approx VB_{p,q}^{\delta}(\mathbb{R}^d)
\end{equation}
where $\delta = (1-\theta) \delta_1+ \theta\delta_2$.
\end{lemma}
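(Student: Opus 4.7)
My plan is to reduce the assertion to interpolation of weighted vector-valued $\ell^q$-sequence spaces via a retraction--coretraction pair built from the reproducing formula of Lemma~\ref{kernel}. For any norm $\sigma$ on $\mathbb{N}_0^d$, let $F_\sigma$ denote the space of $L^p_w(\mathbb{R}^d)$-valued sequences $\{g_{\bar j}\}_{\bar j\in\mathbb{N}_0^d}$ with quasinorm $\bigl(\sum_{\bar j} 2^{q\sigma(\bar j)} \|g_{\bar j}\|_{L^p_w}^q\bigr)^{1/q}$. By Definition~\ref{defVB}, the map $T: f \mapsto \{\boldsymbol{\psi}_{\bar j}*f\}_{\bar j}$ is an isometric embedding of $VB_{p,q}^{\sigma,w}(\mathbb{R}^d)$ into $F_\sigma$ simultaneously for $\sigma\in\{\delta_1,\delta_2,\delta\}$. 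Using Lemma~\ref{kernel}, I would pick a reproducing system $\{\boldsymbol{\zeta}_{\bar j}\}\subset X_b^{\bar L+\bar 1}(\mathbb{R}^d)$ with $f=\sum_{\bar j}\boldsymbol{\zeta}_{\bar j}*\boldsymbol{\psi}_{\bar j}*f$, and set $S(\{g_{\bar j}\}):=\sum_{\bar j}\boldsymbol{\zeta}_{\bar j}*g_{\bar j}$; then $S\circ T=\mathrm{id}$.

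Next I would verify that $S:F_\sigma\to VB_{p,q}^{\sigma,w}(\mathbb{R}^d)$ is bounded for $\sigma\in\{\delta_1,\delta_2\}$. This amounts to reusing the pointwise decay estimate on $\boldsymbol{\psi}_{\bar k}*\boldsymbol{\zeta}_{\bar j}$ and the weighted Young-type convolution inequality already established inside the proof of Theorem~\ref{equivalence}, followed by the same discrete $\ell^q$-convolution summation carried out at the end of that proof. Together with the retraction principle for real interpolation \cite{Bergh76}, this reduces the lemma to the identity
\[
(F_{\delta_1},F_{\delta_2})_{\theta,q} \approx F_\delta, \qquad \delta=(1-\theta)\delta_1+\theta\delta_2,
\]
which is the classical interpolation formula for weighted $\ell^q$ spaces with values in a fixed Banach space. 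That identity rests on the elementary $K$-functional estimate $\int_0^\infty t^{-\theta q}\min\bigl(2^{q\delta_1(\bar j)},\, t^q 2^{q\delta_2(\bar j)}\bigr)\,dt/t \approx 2^{q\delta(\bar j)}$, applied index by index and summed.

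The main technical obstacle will be ensuring that a single reproducing system $\{\boldsymbol{\zeta}_{\bar j}\}$ yields a bounded retraction at both endpoints $F_{\delta_1}$ and $F_{\delta_2}$ simultaneously. This requires choosing the parameter $L$ in Lemma~\ref{kernel} large enough that $L|\bar j|_1>\delta_1(\bar j)$ for all $\bar j\in\mathbb{N}_0^d$, and the hypothesis $\delta_1\geq\delta_2$ then automatically takes care of the $\delta_2$-endpoint. With this choice the geometric factor $2^{-L|\bar j-\bar k|_1}$ dominates both endpoint weights, so the convolution estimate of Theorem~\ref{equivalence} applies verbatim at each endpoint and the retraction bound goes through uniformly in $\sigma$.
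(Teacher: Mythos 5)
Your retract argument is correct in outline, but it is a genuinely different route from the paper. The paper never passes through sequence-space interpolation: it proves both inclusions by hand, using the wavelet characterization of Theorem \ref{wav} to expand $f$ and splitting the expansion at each dyadic level $t=2^j$ according to whether $\delta_1(\bar k)-\delta_2(\bar k)\leq j$ or $>j$, which gives the upper bound for $\int_1^\infty t^{-\theta q}K(t,f)^q\,dt/t$ (the piece over $[0,1]$ being handled by $\|f|VB^{\delta_2,w}_{p,q}\|\leq\|f|VB^{\delta,w}_{p,q}\|$), and for the converse it picks near-optimal decompositions $f=f_1^{(2^j)}+f_2^{(2^j)}$ via Remark \ref{mono} and estimates the $VB^{\delta,w}_{p,q}$-norm directly through the convolution definition, grouping indices $\bar k$ with $j\leq\delta_1(\bar k)-\delta_2(\bar k)\leq j+1$. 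Your approach instead makes $VB^{\sigma,w}_{p,q}$ a retract of $\ell^q_{2^{\sigma(\cdot)}}(L^p_w)$ via $T f=\{\boldsymbol{\psi}_{\bar j}*f\}$ and $S\{g_{\bar j}\}=\sum_{\bar j}\boldsymbol{\zeta}_{\bar j}*g_{\bar j}$, with boundedness of $S$ coming from Lemma \ref{tech} and the discrete convolution step exactly as in Theorem \ref{equivalence}, and then quotes the classical identity $(\ell^q_{w_0}(A),\ell^q_{w_1}(A))_{\theta,q}=\ell^q_{w_0^{1-\theta}w_1^\theta}(A)$; your choice of a single $L$ with $\delta_1(\bar j)<L|\bar j|_1$ (hence valid for $\delta_2\leq\delta_1$ and $\delta\leq\delta_1$) is the right fix for using one coretraction at both endpoints. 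What your route buys is modularity (no wavelets needed, only the reproducing formula and Theorem \ref{equivalence}'s kernel estimates) and a cleaner reduction to a standard K-functional computation done index by index; what it costs is two points you should not leave implicit: (a) the retraction principle itself, i.e.\ the two-sided comparison $K(t,f;VB^{\delta_1,w}_{p,q},VB^{\delta_2,w}_{p,q})\approx K(t,Tf;F_{\delta_1},F_{\delta_2})$, and (b) well-definedness of $S$ on arbitrary sequences $\{g_{\bar j}\}\in F_{\delta_2}$, i.e.\ convergence of $\sum_{\bar j}\boldsymbol{\zeta}_{\bar j}*g_{\bar j}$ in $X^{\bar s}_{b}(\mathbb{R}^d)'$, since the $g_{\bar j}$ are no longer of the form $\boldsymbol{\psi}_{\bar j}*f$; this needs a pairing estimate against test functions of the same kind as in part (i-a) of Theorem \ref{wav}, which is available but must be stated. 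With those two steps written out, your proof is complete and is essentially the standard retract proof of Besov-type interpolation, whereas the paper's is a direct adaptation of Theorem 4.25 of \cite{Sawano18} through its own wavelet theorem.
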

\begin{proof}
Proof of this lemma is an extension of Theorem 4.25 in \cite{Sawano18}. \\
\noindent First, we prove $VB_{p,q}^{\delta,w}(\mathbb{R}^d) \subset \left( VB_{p,q}^{\delta_1,w}(\mathbb{R}^d), VB_{p,q}^{\delta_2,w}(\mathbb{R}^d)\right)_{q,\theta}$. Split the domain of integral of $K-functional$ into $[0,1]$ and $\mathbb{R} \backslash [0,1]$.
Because $\| f | VB^{\delta_2,w}_{p,q} \| \leq \| f | VB^{\delta,w}_{p,q} \|$,
\begin{eqnarray}
\label{two}
    \left( \int_{0}^1 (t^{-\theta} K(t,f,VB^{\delta_1,w}_{p,q},VB^{\delta_2,w}_{p,q}) )^{q}\frac{dt}{t} \right)^{1/q}
    &\leq&
    \left( \int_{0}^1 (t^{-\theta} t \|f |VB^{\delta_2,w}_{p,q}\| )^{q}\frac{dt}{t} \right)^{1/q} \nonumber \\
    &\lesssim&
    \|f |VB^{\delta_2,w}_{p,q}\| \nonumber \\
    &\leq&
    \|f |VB^{\delta,w}_{p,q}\|
\end{eqnarray}
By Theorem \ref{wav}, $f$ has the following wavelet expansion:
\begin{equation}
    f = \sum_{\bar{k} \in \mathbb{N}_0^d} \sum_{\bar{m} \in \mathbb{Z}_0^d} \lambda_{\bar{k},\bar{m}} \boldsymbol{\psi}_{\bar{k},\bar{m}}
\end{equation}
and decompose $f$ into $f^{(j)}_1$ and $f^{(j)}_2$, where
\begin{equation}
    f^{(j)}_1
    :=
    \sum_{
    \substack{\bar{k} \in \mathbb{N}_0^d
    \\
    \delta_1(\bar{k}) - \delta_2(\bar{k}) \leq j}
    } 
    \sum_{\bar{m} \in \mathbb{Z}_0^d} \lambda_{\bar{k},\bar{m}} \boldsymbol{\psi}_{\bar{k},\bar{m}}
    \qquad
    f^{(j)}_2
    :=
    \sum_{
    \substack{\bar{k} \in \mathbb{N}_0^d
    \\
    \delta_1(\bar{k}) - \delta_2(\bar{k}) > j}
    } 
    \sum_{\bar{m} \in \mathbb{Z}_0^d} \lambda_{\bar{k},\bar{m}} \boldsymbol{\psi}_{\bar{k},\bar{m}}
\end{equation}
We then have the following expression:
\begin{eqnarray}
    &&\left( \int_1^{\infty} (t^{-\theta}  K(t,f,VB^{\delta_1,w}_{p,q},VB^{\delta_2,w}_{p,q}))^q \frac{dt}{t} \right)^{1/q} \nonumber \\
    &=&
    \left(
    \sum_{j=0}^{\infty}
    \int_{2^j}^{2^{j+1}} (t^{-\theta} K(t,f,VB^{\delta_1,w}_{p,q},VB^{\delta_2,w}_{p,q}))^q \frac{dt}{t} \right)^{1/q} \nonumber \\
    &\lesssim&
    \left(
    \sum_{j=1}^{\infty}
    2^{-\theta j q} K(2^{j},f,VB^{\delta_1,w}_{p,q},VB^{\delta_2,w}_{p,q})^q \right)^{1/q} \nonumber \\
    &\lesssim&
     \left(
    \sum_{j=1}^{\infty}
    2^{-\theta j q}
    (\|f^{(j)}_1 | VB^{\delta_1,w}_{p,q} \|
    +
    2^{j}
    \|f^{(j)}_2 | VB^{\delta_2,w}_{p,q} \|)^q \right)^{1/q} \nonumber \\
    &\lesssim&
    \left (
    \sum_{j=1}^{\infty}
    2^{-\theta j q}
    \|f^{(j)}_1 | VB^{\delta_1,w}_{p,q}\|^q
    \right) ^{1/q}
    +
    \left (
    \sum_{j=1}^{\infty}
    2^{(1-\theta) j q}
    \|f^{(j)}_2 | VB^{\delta_2,w}_{p,q} \|^q
    \right)^{1/q}
\end{eqnarray}
In the third inequality, the monotonicity of function $K$ with respect to $t$ is used. For more details, please refer to Remark \ref{mono}-1. 
By (i-b) of Theorem \ref{wav},
\begin{eqnarray}
    &&\left( \int_1^{\infty} (t^{-\theta}  K(t,f,VB^{\delta_1,w}_{p,q},VB^{\delta_2,w}_{p,q}))^q \frac{dt}{t} \right)^{1/q} \nonumber \\
    &\lesssim&
    \left (
    \sum_{j=1}^{\infty}
    2^{-\theta j q}
    \sum_{
    \substack{\bar{k} \in \mathbb{N}_0^d
    \\
    \delta_1(\bar{k}) - \delta_2(\bar{k}) \leq j}
    }
    2^{\delta_1(\bar{k})q} \left\|\sum_{\bar{m} \in \mathbb{Z}^d}  \lambda_{\bar{k},\bar{m}} 
    2^{ |\bar{k}|_1/2}
    \boldsymbol{\chi}_{\bar{k},\bar{m}} \right\|_{L^p_w}^q
    \right)^{1/q} \nonumber \\
    && \qquad \qquad  \qquad \qquad \qquad \qquad +
    \left (
    \sum_{j=1}^{\infty}
    2^{-(1-\theta) j q}
    \sum_{
    \substack{\bar{k} \in \mathbb{N}_0^d
    \\
    \delta_1(\bar{k}) - \delta_2(\bar{k}) > j}
    }
    2^{\delta_2(\bar{k})q} \left\|\sum_{\bar{m} \in \mathbb{Z}^d}  \lambda_{\bar{k},\bar{m}} 
    2^{ |\bar{k}|_1/2}
    \boldsymbol{\chi}_{\bar{k},\bar{m}} \right\|_{L^p_w}^q
    \right)^{1/q} \nonumber \\
    &=&
    \left (
    \sum_{\bar{k} \in \mathbb{N}_0^d}
    2^{\delta(\bar{k})q} \left\|\sum_{\bar{m} \in \mathbb{Z}^d}  \lambda_{\bar{k},\bar{m}} 
    2^{ |\bar{k}|_1/2}
    \boldsymbol{\chi}_{\bar{k},\bar{m}} \right\|_{L^p_w}^q
    \sum_{\substack{j \in \mathbb{N}_0
    \\
    \delta_1(\bar{k}) - \delta_2(\bar{k}) \leq j}}
    2^{-\theta j q - \delta(\bar{k})q + \delta_1(\bar{k})q}
    \right)^{1/q} \nonumber \\
    && \qquad \qquad \qquad +
    \left (
    \sum_{\bar{k} \in \mathbb{N}_0^d}
    2^{\delta(\bar{k})q} \left\|\sum_{\bar{m} \in \mathbb{Z}^d}  \lambda_{\bar{k},\bar{m}}
    2^{ |\bar{k}|_1/2}
    \boldsymbol{\chi}_{\bar{k},\bar{m}} \right\|_{L^p_w}^q
    \sum_{\substack{j \in \mathbb{N}_0
    \\
    \delta_1(\bar{k}) - \delta_2(\bar{k}) > j}}
    2^{-(1-\theta) j q - \delta(\bar{k})q + \delta_2(\bar{k})q}
    \right)^{1/q} \nonumber \\
    &=&
    \left (
    \sum_{\bar{k} \in \mathbb{N}_0^d}
    2^{\delta(\bar{k})q} \left\|\sum_{\bar{m} \in \mathbb{Z}^d}  \lambda_{\bar{k},\bar{m}} 
    2^{ |\bar{k}|_1/2}
    \boldsymbol{\chi}_{\bar{k},\bar{m}} \right\|_{L^p_w}^q
    \sum_{\substack{j \in \mathbb{N}_0
    \\
    \delta_1(\bar{k}) - \delta_2(\bar{k}) \leq j}}
    2^{-\theta (j - \delta_1(\bar{k}) + \delta_2(\bar{k}) )q}
    \right)^{1/q} \nonumber \\
    && \qquad \qquad \qquad +
    \left (
    \sum_{\bar{k} \in \mathbb{N}_0^d}
    2^{\delta(\bar{k})q} \left\|\sum_{\bar{m} \in \mathbb{Z}^d}  \lambda_{\bar{k},\bar{m}}
    2^{ |\bar{k}|_1/2}
    \boldsymbol{\chi}_{\bar{k},\bar{m}} \right\|_{L^p_w}^q
    \sum_{\substack{j \in \mathbb{N}_0
    \\
    \delta_1(\bar{k}) - \delta_2(\bar{k}) > j}}
    2^{-(1-\theta) (j + \delta_1(\bar{k}) - \delta_2(\bar{k}))q}
    \right)^{1/q} \nonumber \\
    &&\approx
    \left (
    \sum_{\bar{k} \in \mathbb{N}_0^d}
    2^{\delta(\bar{k})q} \left\|\sum_{\bar{m} \in \mathbb{Z}^d}  \lambda_{\bar{k},\bar{m}}
    2^{ |\bar{k}|_1/2}
    \boldsymbol{\chi}_{\bar{k},\bar{m}} \right\|_{L^p_w}^q \right)^{1/q}
\end{eqnarray}
By (ii-a) of Theorem \ref{wav},
\begin{equation}
\label{one}
    \left( \int_1^{\infty} (t^{-\theta}  K(t,f,VB^{\delta_1,w}_{p,q},VB^{\delta_2,w}_{p,q}))^q \frac{dt}{t} \right)^{1/q}
    \lesssim
    \|f |VB^{\delta,w}_{p,q}\|
\end{equation}
From (\ref{two}) and (\ref{one}), we have the following expression:
\begin{equation}
    \|f |\left( VB_{p,q}^{\delta_1,w}, VB_{p,q}^{\delta_2,w}\right)_{q,\theta}\|
    \lesssim
    \|f |VB^{\delta,w}_{p,q}\|
\end{equation}

Second, we prove $\left( VB_{p,q}^{\delta_1,w}(\mathbb{R}^d), VB_{p,q}^{\delta_2,w}(\mathbb{R}^d)\right)_{q,\theta} \subset VB_{p,q}^{\delta,w}(\mathbb{R}^d)$. By Remark \ref{mono}-2, a decomposition $f=f_1^{(t)} + f_2^{(t)}$ exists for each $t>0$ such that
\begin{equation}
    \|f_1^{(t)} | VB^{\delta_1,w}_{p,q}\|
    +
    t \|f_2^{(t)} | VB^{\delta_2,w}_{p,q}\|
    \leq
    2 K(t,f,VB^{\delta_1,w}_{p,q},VB^{\delta_2,w}_{p,q})
\end{equation}
Then, by triangle inequality and the definition of norm $\| \cdot | VB^{\delta_1,w}_{p,q}\|$ and $\| \cdot | VB^{\delta_2,w}_{p,q}\|$,
\begin{eqnarray}
    &&\|f | VB_{p,q}^{\delta,w} \|
    \leq
    \|f_1^{(2^j)} | VB_{p,q}^{\delta,w} \|
    +
    \|f_2^{(2^j)}  | VB_{p,q}^{\delta,w} \|
    \nonumber \\
    &=&
    \left(
    \sum_{\bar{k} \in \mathbb{N}_0^d}
    2^{\delta(\bar{k})q} \| \boldsymbol{\varphi}_{\bar{k}} * f_1^{(2^j)} \|^q_{L^p_w}
    \right)^{1/q}
    +
    \left(
    \sum_{\bar{k} \in \mathbb{N}_0^d}
    2^{\delta(\bar{k})q} \| \boldsymbol{\varphi}_{\bar{k}} * f_2^{(2^j)} \|^q_{L^p_w}
    \right)^{1/q} \nonumber \\
    &\lesssim&
    \left(
    \sum_{\bar{k} \in \mathbb{N}_0^d}
    2^{\delta(\bar{k})q} \| \boldsymbol{\varphi}_{\bar{k}} * f_1^{(2^j)} \|^q_{L^p_w}
    +
    \sum_{\bar{k} \in \mathbb{N}_0^d}
    2^{\delta(\bar{k})q} \| \boldsymbol{\varphi}_{\bar{k}} * f_2^{(2^j)} \|^q_{L^p_w}
    \right)^{1/q} \nonumber \\
    &=&
    \left(
    \sum_{\bar{k} \in \mathbb{N}_0^d}
    2^{\delta_1(\bar{k})q} 2^{- \theta q (\delta_1(\bar{k}) - \delta_2(\bar{k}))} \| \boldsymbol{\varphi}_{\bar{k}} * f_1^{(2^j)} \|^q_{L^p_w}
    +
    \sum_{\bar{k} \in \mathbb{N}_0^d}
    2^{\delta_2(\bar{k})q} 2^{(1-\theta) q (\delta_1(\bar{k}) - \delta_2(\bar{k}))}
    \| \boldsymbol{\varphi}_{\bar{k}} * f_2^{(2^j)} \|^q_{L^p_w}
    \right)^{1/q}
\end{eqnarray}
For all $\bar{k} \in \mathbb{N}_0^d$, $j \in \mathbb{N}_0$ exist such that $j \leq \delta_1(\bar{k})- \delta_2(\bar{k}) \leq j+1$.
\begin{eqnarray}
    &&\|f | VB_{p,q}^{\delta,w} \|
    \nonumber \\
    &\lesssim&
    \left(
    \sum_{j=0}^{\infty}
    2^{-\theta j q}
    \left\{
    \sum_{\substack{\bar{k} \in \mathbb{N}_0^d \\
    j \leq \delta_1(\bar{k}) - \delta_2(\bar{k}) \leq j+1}}
    2^{q \delta_1(\bar{k})} \| \boldsymbol{\varphi}_{\bar{k}} * f_1^{(2^j)} \|^q_{L^p_w}
    +
    2^{qj}
    \sum_{\substack{\bar{k} \in \mathbb{N}_0^d \\ j \leq \delta_1(\bar{k}) - \delta_2(\bar{k}) \leq j+1}}
    2^{q \delta_2(\bar{k})} \|\boldsymbol{\varphi}_{\bar{k}} * f_2^{(2^j)} \|^q_{L^p_w}
    \right\}
    \right)^{1/q} \nonumber \\
    &\leq&
    \left(
    \sum_{j=0}^{\infty}
    2^{-\theta j q}
    \left\{
    \|f_1^{(2^j)} |VB^{\delta_1,w}_{p,q}\|^q
    +
    2^{jq}
    \|f_2^{(2^j)} |VB^{\delta_1,w}_{p,q}\|^q
    \right\}
    \right)^{1/q} \nonumber \\
    &\lesssim&
    \left(
    \sum_{j=0}^{\infty}
    2^{-\theta j q}
    \left\{
    \|f_1^{(2^j)} |VB^{\delta_1,w}_{p,q}\|
    +
    2^{j}
    \|f_2^{(2^j)} |VB^{\delta_1,w}_{p,q}\|
    \right\}^q
    \right)^{1/q} \nonumber \\
    &\lesssim&
    \left(
    \sum_{j=0}^{\infty}
    2^{-\theta j q}
    K(2^j,f,VB^{\delta_1,w}_{p,q},VB^{\delta_2,w}_{p,q})^q
    \right)^{1/q} \nonumber \\
    &\lesssim&
    \left(
    \sum_{j=0}^{\infty} \int_{2^j}^{2^{}j+1} 
    t^{-\theta q} K(t,f,VB^{\delta_1,w}_{p,q},VB^{\delta_2,w}_{p,q})^q
    \frac{dt}{t}
    \right)^{1/q} \nonumber \\
    &\leq&
    \| f |  \left( VB_{p,q}^{\delta_1}, VB_{p,q}^{\delta_2} \right)_{q,\theta} \|
\end{eqnarray}
The fourth inequality comes from definition of $K$-functional and monotonicity of $K$-functional in Remark \ref{mono}-2 is used in the fifth inequality.
Thus,
\begin{equation}
    \|f | VB_{p,q}^{\delta,w} \|
    \lesssim
    \| f |  \left( VB_{p,q}^{\delta_1}, VB_{p,q}^{\delta_2} \right)_{q,\theta} \|
\end{equation}

\end{proof}

\section{Sparse Grids on \texorpdfstring{$VB^{\delta,w}_{p,q}(\mathbb{R}^d)$}{Lg}
}
\label{section8}
$\quad \; \; \;$ Based on the results from previous section, we detail a formula of sparse grids on $VB^{\delta,w}_{p,q}(\mathbb{R}^d)$. As explained in the introduction, sparse grids are obtained by considering the approximation error. Let $\tilde{f}$ be an approximation of $f$ as follows:
\begin{equation}
    \tilde{f} = \sum_{\bar{j} \in \mathbb{N}_0^d} \sum_{\bar{m} \in G_{\bar{j}}}
    \lambda_{\bar{j},\bar{m}} \psi_{\bar{j},\bar{m}}
\end{equation}
where $G_{\bar{j}}$ is a finite subset of $\mathbb{Z}^d$  and $G_{\bar{j}} \neq \varnothing$ with finite $\bar{j}$. Next, an error estimate becomes the following:
\begin{eqnarray*}
&&
\|f -\tilde{f} | VB_{p,q}^{\delta_1} \|
=
\|\sum_{\bar{j} \in \mathbb{N}_0^d} \sum_{\bar{m} \in \mathbb{Z}^d \backslash G_{\bar{j}}} \lambda_{\bar{j},\bar{m}} \psi_{\bar{j},\bar{m}} | VB_{p,q}^{\delta_1} \| \nonumber \\
&\approx&
\left( \sum_{\bar{j}\in \mathbb{N}_0^d} 2^{\delta_1(\bar{j})q}
    \left\|\sum_{\bar{m} \in \mathbb{Z}^d \backslash G_{\bar{j}}} \lambda_{\bar{j},\bar{m}} \boldsymbol{\chi}_{\bar{j},\bar{m}} \right\|_p^q \right)^{1/q}
=
\left( \sum_{\bar{j}\in \mathbb{N}_0^d} 2^{\delta_1(\bar{j})q}
    \left( \int_{\mathbb{R}^d} |\sum_{\bar{m} \in \mathbb{Z}^d \backslash G_{\bar{j}}} \lambda_{\bar{j},\bar{m}}
    2^{ |\bar{j}|_1/2}
    \boldsymbol{\chi}_{\bar{j},\bar{m}} |^p dx \right)^{q/p} \right)^{1/q} \nonumber \\
&=&
\left( \sum_{\bar{j}\in \mathbb{N}_0^d} 2^{\delta_1(\bar{j})q}
    \left(\sum_{\bar{n} \in \mathbb{Z}^d} \int_{Q_{\bar{j},\bar{n}}} |\sum_{\bar{m} \in \mathbb{Z}^d \backslash G_{\bar{j}}} \lambda_{\bar{j},\bar{m}}
    2^{ |\bar{j}|_1/2}
    \boldsymbol{\chi}_{\bar{j},\bar{m}} |^p dx \right)^{q/p} \right)^{1/q} \nonumber \\
&=&
\left( \sum_{\bar{j}\in \mathbb{N}_0^d} 2^{\delta_1(\bar{j})q}
    \left(\sum_{\bar{n} \in \mathbb{Z}^d}
    |\lambda_{\bar{j},\bar{n}}|^p |Q_{\bar{j},\bar{n}}| \right)^{q/p} \right)^{1/q}
=
\left( \sum_{\bar{j}\in \mathbb{N}_0^d} 2^{\delta_1(\bar{j})q}
    \left(\sum_{\bar{m} \in \mathbb{Z}^d}
    |\lambda_{\bar{j},\bar{m}}|^p  
    w(Q_{\bar{j},\bar{m}})
    \frac{|Q_{\bar{j},\bar{m}}|}{w(Q_{\bar{j},\bar{m}})}
    \right)^{q/p}
    \right)^{1/q} \nonumber \\
&\leq&
\left( \sum_{\bar{j}\in \mathbb{N}_0^d} 2^{\delta_1(\bar{j})q}
\max_{\bar{m} \in \mathbb{Z}^d \backslash G_{\bar{j}}}
\left(\frac{|Q_{\bar{j},\bar{m}}|}{w(Q_{\bar{j},\bar{m}})}\right)^{q/p}
\left(\sum_{\bar{m} \in \mathbb{Z}^d}
    |\lambda_{\bar{j},\bar{m}}|^p w(Q_{\bar{j},\bar{m}}) \right)^{q/p} \right)^{1/q} \nonumber \\
&\leq&
\max_{\bar{j} \in \mathbb{N}_0^d} \left(2^{-(\delta_2(\bar{j})-\delta_1(\bar{j}))} 
\max_{\bar{m} \in \mathbb{Z}^d \backslash G_{\bar{j}}}
\left(\frac{|Q_{\bar{j},\bar{m}}|}{w(Q_{\bar{j},\bar{m}})}\right)^{1/p}
\right)
\left( \sum_{\bar{j}\in \mathbb{N}_0^d} 2^{\delta_2(\bar{j})q}
\left(\frac{|Q_{\bar{j},\bar{m}}|}{w(Q_{\bar{j},\bar{m}})}\right)^{q/p}
\left(\sum_{\bar{m} \in \mathbb{Z}^d}
    |\lambda_{\bar{j},\bar{m}}|^p w(Q_{\bar{j},\bar{m}}) \right)^{q/p} \right)^{1/q} \nonumber \\
&=&
\max_{\bar{j} \in \mathbb{N}_0^d} \left(2^{-(\delta_2(\bar{j})-\delta_1(\bar{j}))} 
\max_{\bar{m} \in \mathbb{Z}^d \backslash G_{\bar{j}}}
\left(\frac{|Q_{\bar{j},\bar{m}}|}{w(Q_{\bar{j},\bar{m}})}\right)^{1/p}
\right)
\left( \sum_{\bar{j}\in \mathbb{N}_0^d} 2^{\delta_1(\bar{j})q} \left\|\sum_{\bar{m} \in \mathbb{Z}^d \backslash G_{\bar{j}}} \lambda_{\bar{j},\bar{m}} 
2^{ |\bar{j}|_1/2}
\boldsymbol{\chi}_{\bar{j},\bar{m}} \right\|_{L^p_w}^q \right)^{1/q} \nonumber \\
&\approx&
\max_{\bar{j} \in \mathbb{N}_0^d} \left(2^{-(\delta_2(\bar{j})-\delta_1(\bar{j}))} 
\max_{\bar{m} \in \mathbb{Z}^d \backslash G_{\bar{j}}}
\left(\frac{|Q_{\bar{j},\bar{m}}|}{w(Q_{\bar{j},\bar{m}})}\right)^{1/p}
\right)
\|f| VB_{p,q}^{\delta_1} \|
\end{eqnarray*}

In case $w(x) = e^{b|x|_1}$,
\begin{equation}
    \|f - \sum_{\bar{j}\in \mathbb{N}_0^d} \sum_{\bar{m} \in G_{\bar{j}}} \lambda_{\bar{j},\bar{m}} \boldsymbol{\psi}_{\bar{j},\bar{m}} | VB_{p,q}^{\delta_1} \|
    \lesssim
    \max_{\bar{j}\in \mathbb{N}_0^d } \left(2^{-(\delta_2(\bar{j})-\delta_1(\bar{j}))}
    \max_{\bar{m} \in \mathbb{Z}^d \backslash G_{\bar{j}}} 
    \exp\left(-b\left|\frac{\bar{m}}{2^{\bar{j}}}\right| \right)
    \right)
    \|f|VB_{p,p}^{\delta,w}(\mathbb{R}^d)\|
\end{equation}
Minimum $G_{\bar{j}}$ is computed as follows:
\begin{equation}
    2^{-(\delta_2(\bar{j})-\delta_1(\bar{j}))}
    \max_{\bar{m} \in \mathbb{Z}^d \backslash G_{\bar{j}}} 
    \exp \left(-b\left|\frac{\bar{m}}{2^{\bar{j}}}\right| \right)
    <
    \epsilon
\end{equation}
for all $\bar{j} \in \mathbb{N}_0^d$ when a positive real number $\epsilon$ is given. Here, $G_{\bar{j}}$ is the null set when
$2^{-(\delta_2(\bar{j})-\delta_1(\bar{j}))} \epsilon^{-1} < 1$. In case $2^{-(\delta_2(\bar{j})-\delta_1(\bar{j}))} \epsilon^{-1} \geq 1$, $\bar{m}$ is obtained by the following expression:
\begin{equation}
    \log(2^{-(\delta_2(\bar{j})-\delta_1(\bar{j}))} \epsilon^{-1} ) <  \min_{\bar{m} \in \mathbb{Z}^d \backslash G_{\bar{j}}}
    b\left|\frac{\bar{m}}{2^{\bar{j}}}\right|_1
\end{equation}
Here, $G_{\bar{j}}$ is obtained by calculating all $\bar{m}$ such that:
\begin{equation}
    b\left|\frac{\bar{m}}{2^{\bar{j}}}\right|_1
    \leq
    \log(2^{-(\delta_2(\bar{j})-\delta_1(\bar{j}))} \epsilon^{-1} )
\end{equation}
for each $\bar{j}$ such that $2^{-(\delta_2(\bar{j})-\delta_1(\bar{j}))} \epsilon^{-1} \geq 1$. 

Figure \ref{fig:weight_figures} and Figure \ref{fig:smooth_fig} detail the centers of the wavelet when dimension is $2$ by x marks. Figure \ref{fig:weight_figures} contains figures when weight parameter $b$ changes. Figure \ref{fig:smooth_fig} contains figures when norm $\delta$, which controls smoothness changes.

\begin{figure}[htbp]
 \begin{minipage}[b]{0.49\linewidth}
  \centering
  \includegraphics[keepaspectratio, scale=0.49,bb= 0 0 400 180]
  {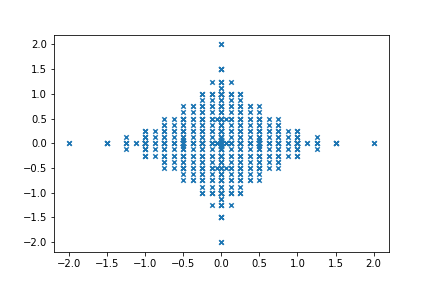}
 \end{minipage}
 \begin{minipage}[b]{0.49\linewidth}
  \centering
  \includegraphics[keepaspectratio, scale=0.49,bb= 0 0 400 180]
  {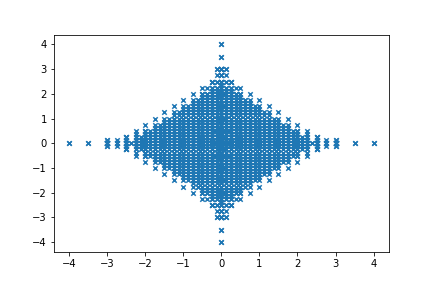}
 \end{minipage}
 \begin{minipage}[b]{0.49\linewidth}
  \centering
  \includegraphics[keepaspectratio, scale=0.49,bb= 0 0 400 300]
  {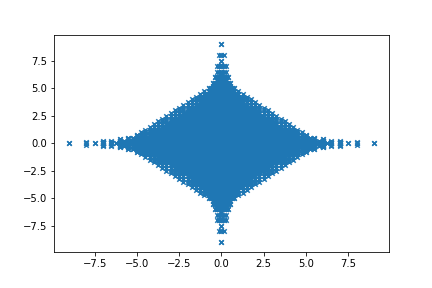}
 \end{minipage}
 \begin{minipage}[b]{0.49\linewidth}
  \centering
  \includegraphics[keepaspectratio, scale=0.49,bb= 0 0 400 300]
  {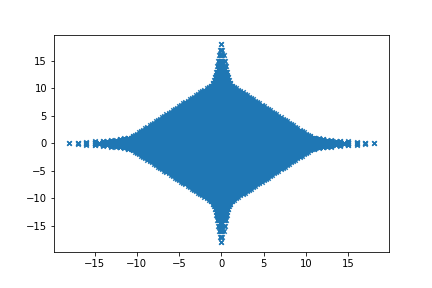}
 \end{minipage}
 \caption{Centers of the wavelet in case $b=4$(upper left), $b=2$(upper left), $b=1$(lower right), $b=0.5$(lower right) when $\delta_2(\cdot)= 0.25 |\cdot|_1 + 0.75|\cdot|_{\infty}$, $\delta_1(\cdot)=0$ and $\epsilon=0.03$.
 }\label{fig:weight_figures}
\end{figure}

\begin{figure}[htbp]
\label{smooth_fig}
 \begin{minipage}[b]{0.49\linewidth}
  \centering
  \includegraphics[keepaspectratio, scale=0.49,bb= 0 0 400 180]
  {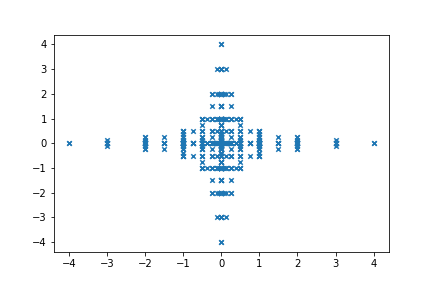}
 \end{minipage}
 \begin{minipage}[b]{0.49\linewidth}
  \centering
  \includegraphics[keepaspectratio, scale=0.49,bb= 0 0 400 180]
  {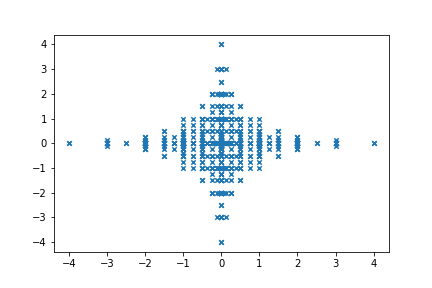}
 \end{minipage}
 \begin{minipage}[b]{0.49\linewidth}
  \centering
  \includegraphics[keepaspectratio, scale=0.49,bb= 0 0 400 300]
  {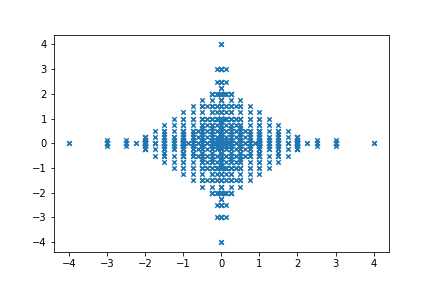}
 \end{minipage}
 \begin{minipage}[b]{0.49\linewidth}
  \centering
  \includegraphics[keepaspectratio, scale=0.49,bb= 0 0 400 300]
  {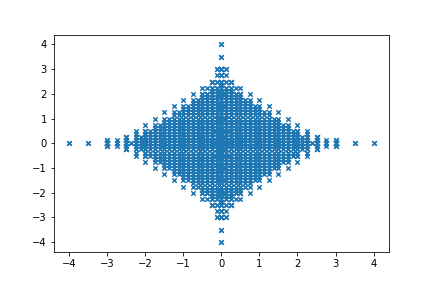}
 \end{minipage}
 \caption{Centers of the wavelet in the case $\delta_2(\cdot)= (1-\theta)|\cdot|_1 + \theta|\cdot|_{\infty}$, $\delta_1(\cdot)=0$ and  $\theta=0$(upper left), $\theta=0.25$(upper right), $\theta=0.5$(lower left), $\theta=0.75$(lower right) when $b=2$ and $\epsilon=0.03$.}\label{fig:smooth_fig}
\end{figure}

\newpage

\appendix
\section{Appendix}
$\quad \; \; \;$ Technical results used in this paper are detailed in this appendix. 
\subsection{Some results on \texorpdfstring{$X_b^{\bar{s}}(\mathbb{R}^d)$}{Lg}}
$\quad \; \; \;$ Every lemma in this subsection is intended to prove reproducing formula on $X_b^{\bar{s}}(\mathbb{R}^d)$ in the next subsection. Only Lemma \ref{repro} is directly dedicated to the proof of the reproducing formula on $X_b^{\bar{s}}(\mathbb{R}^d)$ in the next subsection. Following lemmas from Lemma \ref{bounded} to Lemma \ref{fril} are preparations for Lemma \ref{repro}.
\begin{lemma}

\label{bounded}
Let $\bar{s} \geq \bar{2}$ and $\varphi \in X_{b}^{\bar{s}}(\mathbb{R}^d)$, then we have the following expression:
\begin{equation}
    |e^{b'|x|_1} \varphi(x)| \leq C 
\end{equation}
for $b' < b$
where $C$ is a some positive constant.
\end{lemma}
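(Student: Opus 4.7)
The plan is to show that $g(x):=e^{b'|x|_1}\varphi(x)$ belongs to the mixed Sobolev space $H^{\bar{1}}_{\mathrm{mix}}(\mathbb{R}^d)$ and then apply the embedding $H^{\bar{1}}_{\mathrm{mix}}(\mathbb{R}^d)\hookrightarrow L^\infty(\mathbb{R}^d)$, which will deliver the desired pointwise bound.

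To verify $g\in H^{\bar{1}}_{\mathrm{mix}}$, I would first note that $\partial_i e^{b'|x_i|}=b'\,\mathrm{sgn}(x_i)\,e^{b'|x_i|}$ in the weak sense, so $|D^{\bar{\gamma}}(e^{b'|x|_1})|\leq (b')^{|\bar{\gamma}|_1}e^{b'|x|_1}$ for any $\bar{\gamma}\leq\bar{1}$. By the Leibniz rule,
\begin{equation*}
D^{\bar{\alpha}}g=\sum_{\bar{\gamma}\leq\bar{\alpha}}\binom{\bar{\alpha}}{\bar{\gamma}}\,D^{\bar{\gamma}}(e^{b'|x|_1})\,D^{\bar{\alpha}-\bar{\gamma}}\varphi,\qquad \bar{\alpha}\leq\bar{1}.
\end{equation*}
Since $b'<b$, we have $e^{b'|x|_1}=e^{-(b-b')|x|_1}e^{b|x|_1}\leq e^{b|x|_1}$, so each summand is controlled in $L^2$ by
\begin{equation*}
\|e^{b'|x|_1}D^{\bar{\alpha}-\bar{\gamma}}\varphi\|_2\leq\|e^{b|x|_1}D^{\bar{\alpha}-\bar{\gamma}}\varphi\|_2<\infty,
\end{equation*}
where finiteness is one summand (with $\bar{\beta}=\bar{0}$) of the defining norm of $X_b^{\bar{s}}$, using $\bar{\alpha}-\bar{\gamma}\leq\bar{1}\leq\bar{2}\leq\bar{s}$. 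Hence $g\in H^{\bar{1}}_{\mathrm{mix}}(\mathbb{R}^d)$.

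For the embedding step, assuming momentarily that $g$ decays in each coordinate at infinity, the iterated fundamental theorem of calculus yields
\begin{equation*}
g(x)^2=(-1)^d\int_{x_1}^{\sigma_1\infty}\!\!\!\cdots\!\int_{x_d}^{\sigma_d\infty}\partial_1\cdots\partial_d(g^2)(y)\,dy,
\end{equation*}
with $\sigma_i\in\{+1,-1\}$ chosen so that $g$ vanishes at infinity along each half-line. Expanding $\partial_1\cdots\partial_d(g^2)$ via Leibniz gives a linear combination of products $D^{\bar{\alpha}}g\cdot D^{\bar{1}-\bar{\alpha}}g$ with $\bar{\alpha}\leq\bar{1}$, and Cauchy--Schwarz produces
\begin{equation*}
|g(x)|^2\lesssim\sum_{\bar{\alpha}\leq\bar{1}}\|D^{\bar{\alpha}}g\|_2\,\|D^{\bar{1}-\bar{\alpha}}g\|_2\lesssim\|g\|_{H^{\bar{1}}_{\mathrm{mix}}}^2,
\end{equation*}
uniformly in $x$. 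This is precisely the bound $|e^{b'|x|_1}\varphi(x)|\leq C$.

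The main obstacle is justifying Leibniz and the iterated FTC in spite of two subtleties: the non-smoothness of $|x_i|$ at $x_i=0$, and the a priori absence of pointwise decay for an arbitrary element of $H^{\bar{1}}_{\mathrm{mix}}$. The first is harmless because only first-order derivatives in each coordinate arise, for which the weak derivative of $e^{b'|x_i|}$ is an $L^\infty_{\mathrm{loc}}$ function with no singular part, so no distributional delta contributions appear. The second can be handled by approximating $\varphi$ by Schwartz functions in the $X_b^{\bar{s}}$-norm (which is $L^2$-based and permits such density), proving the pointwise inequality for the smooth approximants, and passing to the limit using $L^2$-convergence of all relevant derivatives of $g$.
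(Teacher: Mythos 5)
Your argument is correct in substance, but it follows a genuinely different route from the paper. The paper works on the Fourier side: restricting to an orthant, it bounds $|e^{b'|x|_1}\varphi(x)|$ by $(2\pi)^{-d/2}\|\mathcal{F}[e^{b'\sum_j x_j}\varphi]\|_{L^1}$, inserts the factor $\prod_{j}\{b^2+(\xi_j+ib')^2\}^{\pm1}$, applies the Cauchy--Schwarz inequality, and identifies the two $L^2$ factors (via $\mathcal{F}[e^{-b|\cdot|}](\xi)=2b/(b^2+\xi^2)$ and Parseval) as $\|e^{-b|\cdot|_1}e^{b'\sum_j(\cdot_j)}\|_{L^2}$, finite because $b'<b$, and $\|e^{b\sum_j(\cdot_j)}\prod_j(b^2+D_j^2)\varphi\|_{L^2}$, finite because $\bar{s}\geq\bar{2}$; so the paper genuinely spends two derivatives per coordinate. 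You instead work in physical space: Leibniz for the locally Lipschitz weight $e^{b'|x|_1}$ plus the defining weighted $L^2$ bounds show $g=e^{b'|\cdot|_1}\varphi\in H^{\bar{1}}_{\mathrm{mix}}$, and the iterated fundamental theorem of calculus with Cauchy--Schwarz gives the tensorized embedding $H^{\bar{1}}_{\mathrm{mix}}\hookrightarrow L^{\infty}$. This is more elementary (no complex-shifted Fourier transforms) and in fact only needs $\bar{s}\geq\bar{1}$, a slightly sharper statement, whereas the paper's computation is shorter given its other Fourier-analytic appendix lemmas. One caveat: your limiting step invokes density of Schwartz functions in the $X_b^{\bar{s}}$-norm, which is neither proved in the paper nor obvious (the paper's Lemma~\ref{fril} only gives $C^{\infty}$ approximation in $X_{b'}^{\bar{s}}$ for $b'<b$); this is easily repaired, since all you need is to approximate $g$ itself in the unweighted $H^{\bar{1}}_{\mathrm{mix}}$ norm by smooth compactly supported functions (mollification and cutoff), prove the pointwise bound for the approximants, and pass to the limit, identifying $\varphi$ with its continuous representative exactly as the paper implicitly does when it applies pointwise Fourier inversion.
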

\begin{proof}
Let $\bar{b'} = (b',b',\cdots,b')$. 
We give a proof when $\bar{x} \geq \bar{0}$ because proof can be done in the similar manner in other cases.
\begin{eqnarray}
    |e^{b' |x|_1} \varphi(x)| &\leq& (2 \pi)^{-d/2}\|\mathcal{F}[e^{b' \sum_{j=1}^d (\cdot_j)}\varphi] \|_{1} \nonumber \\
    &=& (2 \pi)^{-d/2}\|\prod_{j=1}^{d} \{ b^2+(\cdot_j +ib')^2 \}^{-1} \prod_{j=1}^{d} \{ b^2 +(\cdot_j +ib')^2 \} \mathcal{F}[\varphi](\cdot + i \bar{b'}) \|_{L^1} \nonumber \\
    &\leq&
    (2 \pi)^{-d/2}
    \|\prod_{j=1}^{d} \{ b^2+(\cdot_j +ib')^{2} \}^{-1}\|_{L^2}
    \|\prod_{j=1}^{d} \{ b^2+(\cdot_j +ib')^2 \}
    \mathcal{F}[\varphi](\cdot + i\bar{b'}) \|_{L^2}
    \nonumber \\
    &=&
    (2 \pi)^{-d/2} (2b)^{-d}
    \|e^{-b |\cdot|_1}
    e^{b' \sum_{j=1}^d (\cdot_j)}\|_{L^2}
    \|e^{b \sum_{j=1}^d (\cdot_j)} \prod_{j=1}^d(b^2+D_j^2) \varphi \|_{L^2}
\end{eqnarray}
In the last inequality, the fact $\mathcal{F}[e^{-\alpha|\cdot|}](\xi) = \frac{2 \alpha}{\alpha^2 + \xi^2}$ is used. The term $\|e^{-b |\cdot|_1}
    e^{b' \sum_{j=1}^d (\cdot_j)}\|_{L^2}$ is bounded because ${b'} < {b}$. $\|e^{b \sum_{j=1}^d (\cdot_j)} \prod_{j=1}^d(b+D_j^2) \varphi \|_{L^2}$ is also bounded due to the definition of $X_{b}^{\bar{s}}(\mathbb{R}^d)$ ($\bar{s}\geq \bar{2}$).

\end{proof}

\begin{lemma}
\label{conv}
    Let $s \geq 1$ and $\varphi,\psi \in X_b^s(\mathbb{R})$, then $\varphi*\psi \in X_b^s(\mathbb{R})$.
\end{lemma}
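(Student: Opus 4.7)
The plan is to verify the defining integrability directly: show that $\|e^{b|\cdot|}(\cdot)^{\beta}D^{\alpha}(\varphi*\psi)\|_2<\infty$ for every $0\leq\alpha,\beta\leq s$, which also gives $\varphi*\psi\in MB^{s}_{2,2}(\mathbb{R})=B^{s}_{2,2}(\mathbb{R})$ by specializing $\beta=0$. The starting point is the two classical identities $D^{\alpha}(\varphi*\psi)=(D^{\alpha}\varphi)*\psi$ and the binomial expansion $x^{\beta}=\sum_{k=0}^{\beta}\binom{\beta}{k}(x-y)^k y^{\beta-k}$, which together yield
\begin{equation*}
x^{\beta}D^{\alpha}(\varphi*\psi)(x)=\sum_{k=0}^{\beta}\binom{\beta}{k}\,\bigl((\cdot)^{k}D^{\alpha}\varphi\bigr)*\bigl((\cdot)^{\beta-k}\psi\bigr)(x).
\end{equation*}
Inserting the submultiplicative bound $e^{b|x|}\leq e^{b|x-y|}e^{b|y|}$ distributes the exponential weight across both factors, so that pointwise in $x$,
\begin{equation*}
e^{b|x|}\bigl|x^{\beta}D^{\alpha}(\varphi*\psi)(x)\bigr|\leq\sum_{k=0}^{\beta}\binom{\beta}{k}\bigl(e^{b|\cdot|}|(\cdot)^{k}D^{\alpha}\varphi|\bigr)*\bigl(e^{b|\cdot|}|(\cdot)^{\beta-k}\psi|\bigr)(x).
\end{equation*}

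Next I would apply Young's inequality in the form $\|f*g\|_2\leq\|f\|_1\|g\|_2$, choosing which of the two factors carries the $L^1$-norm based on the value of $k$. For indices with $k\leq s-1$ I put the $L^{1}$-norm on the $\varphi$-side, estimating
\begin{equation*}
\|e^{b|\cdot|}(\cdot)^{k}D^{\alpha}\varphi\|_{1}\leq\|(1+|\cdot|)^{-1}\|_{2}\,\bigl(\|e^{b|\cdot|}(\cdot)^{k}D^{\alpha}\varphi\|_{2}+\|e^{b|\cdot|}(\cdot)^{k+1}D^{\alpha}\varphi\|_{2}\bigr),
\end{equation*}
by Cauchy--Schwarz; both $L^{2}$-norms on the right are finite because $k,k+1\leq s$ and $\alpha\leq s$. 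The companion factor $\|e^{b|\cdot|}(\cdot)^{\beta-k}\psi\|_{2}$ is immediately controlled by the $X_{b}^{s}$-norm of $\psi$. For the edge case $k=\beta=s$, where the above argument would require the forbidden index $s+1$, I would instead swap roles and use $\|f*g\|_{2}\leq\|f\|_{2}\|g\|_{1}$, so the relevant $L^{1}$-factor becomes
\begin{equation*}
\|e^{b|\cdot|}\psi\|_{1}\leq\|(1+|\cdot|)^{-1}\|_{2}\,\bigl(\|e^{b|\cdot|}\psi\|_{2}+\|e^{b|\cdot|}(\cdot)\psi\|_{2}\bigr),
\end{equation*}
which is finite precisely because $s\geq 1$ ensures $\beta=1$ is an admissible weight-power for $\psi$.

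Summing the finitely many contributions from $k=0,\ldots,\beta$ gives the required bound on $\|e^{b|\cdot|}(\cdot)^{\beta}D^{\alpha}(\varphi*\psi)\|_{2}$ for each admissible pair $(\alpha,\beta)$, proving $\varphi*\psi\in X_{b}^{s}(\mathbb{R})$. The only mildly delicate point is the corner $k=\beta=s$: one must recognize that the naive symmetric argument fails there and resolve it by reversing the Young partner, which is exactly what the hypothesis $s\geq 1$ permits through the one extra power of $|x|$ available on the $\psi$-side. Everything else is routine bookkeeping with the binomial expansion, the triangle inequality for $e^{b|\cdot|}$, and the Cauchy--Schwarz trick using $(1+|x|)^{-1}\in L^{2}(\mathbb{R})$.
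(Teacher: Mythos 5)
Your proposal is correct and follows essentially the same route as the paper: distribute the exponential weight via $e^{b|x|}\leq e^{b|x-y|}e^{b|y|}$, apply Young's inequality $\|f*g\|_2\leq\|f\|_1\|g\|_2$, and control the $L^1$ factor by Cauchy--Schwarz against $(1+|x|)^{-1}\in L^2(\mathbb{R})$, with $s\geq 1$ supplying the extra power of $|x|$. The only difference is cosmetic: the paper uses the crude split $|x|^{\alpha}\lesssim|x-y|^{\alpha}+|y|^{\alpha}$ (two terms, no corner case) instead of your exact binomial expansion, which forces you to swap the Young partners at the single term $k=\beta=s$ — a case you correctly identify and resolve.
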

\begin{proof}
For all $0\leq \alpha, \beta \leq s$ proving the following is sufficient:
\begin{equation}
    \| e^{b|\cdot|} (\cdot)^{\alpha} \partial^{\beta} (\varphi*\psi) \|_{L^2} < \infty
\end{equation}
From $(x)^{2\alpha} \lesssim (x-y)^{2\alpha}+(y)^{2\alpha}$ and $2^{b|x|} \leq e^{b|x-y|} 2^{b|y|}$
\begin{eqnarray}
    \| e^{b|\cdot|} (\cdot)^{\alpha} \partial^{\beta} (\varphi*\psi) \|_{L^2} 
    &\leq&
    \left(
    \int_{\mathbb{R}}
    \left(
    e^{b|x|}
    |x|^{\alpha}
    \int_{\mathbb{R}}
    |\varphi(y)| |\partial^{\beta} \psi(x-y)|dy
    \right)^2
    dx
    \right)^{1/2}
    \nonumber\\
    &\lesssim&
    \left(
    \int_{\mathbb{R}}
    \left(
    \int_{\mathbb{R}}
    |x-y|^{\alpha}
    e^{b|y|}
    e^{b|x-y|}
    |\varphi(y)| |\partial^{\beta} \psi(x-y)|dy
    \right)^2
    dx
    \right)^{1/2} \nonumber \\
    && \qquad \qquad \qquad \qquad
    +
    \left(
    \int_{\mathbb{R}}
    \left(
    \int_{\mathbb{R}}
    |y|^{\alpha}
    e^{b|y|}
    e^{b|x-y|}
    |\varphi(y)| |\partial^{\beta} \psi(x-y)|dy
    \right)^2
    dx
    \right)^{1/2}
    \nonumber\\
    &\lesssim&
    \| (|e^{b|\cdot|}  \varphi|)*(|e^{b|\cdot|} (\cdot)^{\alpha} \partial^{\beta} \psi|) \|_{L^2}
    +
    \| (|e^{b|\cdot|} (\cdot)^{\alpha}  \varphi|)*(|e^{b|\cdot|}  \partial^{\beta} \psi|) \|_{L^2}
    \nonumber \\
    &\leq&
     \|e^{b|\cdot|} \varphi \|_{L^1}
     \|e^{b|\cdot|}(\cdot)^{\alpha} \partial^{\beta} \psi \|_{L^2} 
     +
     \|e^{b|\cdot|}  \partial^{\beta} \psi \|_{L^1}
    \|e^{b|\cdot|} (\cdot)^{\alpha} \varphi \|_{L^2}
     \nonumber \\
     &\leq&
     \|(1 + |\cdot|)^{-1} \|_{L^2}
     \|e^{b|\cdot|} (1 + |\cdot|)\varphi \|_{L^2}
     \|e^{b|\cdot|}(\cdot)^{\alpha} \partial^{\beta} \psi \|_{L^2} \nonumber \\
     && \qquad +
     \|(1 + |\cdot|)^{-1} \|_{L^2}
     \|e^{b|\cdot|} (1 + |\cdot|) \partial^{\beta}\psi \|_{L^2}
    \|e^{b|\cdot|} (\cdot)^{\alpha} \partial^{\beta} \varphi\|_{L^2} \nonumber \\
     &<& \infty
\end{eqnarray}
In the second inequality, young inequality is used and in the fourth inequality, the Hölder inequality is used.
\end{proof}

\begin{lemma}
\label{fril}
Let $\varphi \in X_{b}^{\bar{s}+{1}}(\mathbb{R}^d)$ and
$
    G_{\epsilon}(x) := \frac{1}{(\sqrt{2 \pi \epsilon})^d} \exp(-\frac{|x|^2}{2 \epsilon})
$. Then,
\begin{enumerate}[(i)]
    \item$G_{\epsilon} * (\partial^{\beta} \varphi) \xrightarrow{\epsilon \rightarrow 0} (\partial^{\beta} \varphi)$ uniformly if $0 \leq \beta \leq s$
    \item $G_{\epsilon} * \varphi \xrightarrow{\epsilon \rightarrow 0} \varphi
    \qquad
    \text{in}
    \quad
    X_{b'}^{\bar{s}}(\mathbb{R}^d)$ for all $0<b'<b$
    \item
    When $0 \leq \beta \leq s$
    \begin{enumerate}
        \item $\sup_{x \in \mathbb{R}} |e^{b|x|} \partial^{\beta} (G_{\epsilon} * \varphi)(x)| \leq C_1 $
        \item $\sup_{x \in \mathbb{R}} |e^{b|x|} \partial^{\beta+1} (G_{\epsilon} * \varphi)(x)| \leq C_2 \epsilon^{-3/4}$
    \end{enumerate}
    where $C_1$ and $C_2$ are positive real numbers that do not depend on $\epsilon$.
\end{enumerate}
Furthermore, $G_{\epsilon} * \varphi$ is a $C^{\infty}$-function.
\end{lemma}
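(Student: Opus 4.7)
My plan is to handle the four claims of Lemma \ref{fril} in sequence, exploiting the pointwise exponential decay supplied by Lemma \ref{bounded} together with explicit Gaussian computations for $G_{\epsilon}$.

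For (i), the key input is that $\partial^{\beta}\varphi$ is continuous and admits the pointwise bound $|\partial^{\beta}\varphi(x)| \lesssim e^{-b'|x|_1}$ for some $b' < b$; this comes from Lemma \ref{bounded} applied with the appropriate shift of indices, since $\varphi \in X_{b}^{\bar{s}+1}(\mathbb{R}^d)$ still leaves enough weighted derivatives available after $\beta$ of them have been taken. Continuity together with exponential decay implies uniform continuity on $\mathbb{R}^d$. Then the standard approximate-identity split
\[
    G_{\epsilon} * \partial^{\beta}\varphi(x) - \partial^{\beta}\varphi(x) = \int_{\mathbb{R}^d} G_{\epsilon}(y)\,[\partial^{\beta}\varphi(x-y) - \partial^{\beta}\varphi(x)]\,dy
\]
into $\{|y| < \eta\}$ and $\{|y| \geq \eta\}$ yields uniform convergence to $0$: the first region is controlled by the modulus of continuity (uniform in $x$), and the second by the Gaussian tail $\int_{|y| \geq \eta} G_{\epsilon}(y)\,dy \to 0$ together with the $L^{\infty}$-bound on $\partial^{\beta}\varphi$.

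For (ii), $D^{\bar{\alpha}}$ commutes with convolution by $G_{\epsilon}$, so it suffices to show that
\[
    \bigl\|e^{b'|x|_1}\,x^{\bar{\beta}}\bigl(G_{\epsilon} * D^{\bar{\alpha}}\varphi - D^{\bar{\alpha}}\varphi\bigr)\bigr\|_{L^2} \to 0
\]
for each admissible $\bar{\alpha}, \bar{\beta}$. I would apply dominated convergence: pointwise vanishing follows from (i) applied to $D^{\bar{\alpha}}\varphi$, while domination is built from the elementary estimates $e^{b'|x|_1} \leq e^{b'|y|_1}\,e^{b'|x-y|_1}$ and $|x|^{\bar{\beta}} \lesssim |y|^{\bar{\beta}} + |x-y|^{\bar{\beta}}$, combined with Lemma \ref{bounded} to transfer the weight and polynomial factors onto $\varphi$. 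The resulting $y$-integral is bounded by $\int G_{\epsilon}(y)\,e^{b'|y|_1}\,(1+|y|^{|\bar{\beta}|})\,dy$, which a direct Gaussian calculation (substitute $y = \sqrt{\epsilon}z$) shows is $O(1)$ uniformly in $\epsilon$; the exponential gap $b' < b$ ensures that the pointwise majorant lies in $L^2$.

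For (iii-a), write $\partial^{\beta}(G_{\epsilon} * \varphi) = G_{\epsilon} * \partial^{\beta}\varphi$ and estimate
\[
    \bigl|e^{b|x|_1}(G_{\epsilon} * \partial^{\beta}\varphi)(x)\bigr| \leq \|e^{b|\cdot|_1}\partial^{\beta}\varphi\|_{\infty}\,\int_{\mathbb{R}^d} G_{\epsilon}(y)\,e^{b|y|_1}\,dy,
\]
with the first factor finite by Lemma \ref{bounded} (absorbing a harmless loss in $b$ into the constant) and the second factor $O(1)$ by completion of the square. For (iii-b), which is where the precise exponent $3/4$ emerges, the idea is to distribute one derivative onto the mollifier: $\partial^{\beta+1}(G_{\epsilon} * \varphi) = (\partial G_{\epsilon}) * \partial^{\beta}\varphi$. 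Applying the weight split $e^{b|x|_1} \leq e^{b|y|_1} e^{b|x-y|_1}$ and Cauchy--Schwarz gives
\[
    \bigl|e^{b|x|_1}(\partial G_{\epsilon} * \partial^{\beta}\varphi)(x)\bigr| \leq \bigl\|e^{b|\cdot|_1}\,\partial G_{\epsilon}\bigr\|_{L^2}\,\bigl\|e^{b|\cdot|_1}\,\partial^{\beta}\varphi\bigr\|_{L^2};
\]
the second factor is finite by the definition of $X_{b}^{\bar{s}+1}$. Using $\partial_{i} G_{\epsilon}(y) = -(y_{i}/\epsilon)\,G_{\epsilon}(y)$, the substitution $y = \sqrt{\epsilon}z$ and completion of the square produce $\int |\partial G_{\epsilon}(y)|^{2}\,e^{2b|y|_1}\,dy \asymp \epsilon^{-3/2}$, whose square root is exactly the claimed $\epsilon^{-3/4}$. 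The $C^{\infty}$ regularity of $G_{\epsilon} * \varphi$ is immediate from the Schwartz decay of $G_{\epsilon}$ and its derivatives, which also legitimates interchanging differentiation and convolution throughout. The main obstacle I expect is step (ii): carrying the polynomial and exponential weights through the convolution and producing a uniform $L^{2}$-majorant requires the strict gap $b' < b$ to absorb the Gaussian moments, and each term of the binomial expansion of $x^{\bar{\beta}}$ must be handled individually.
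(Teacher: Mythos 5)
Most of your outline is sound, and your treatment of (iii-b) is essentially the paper's own computation (Cauchy--Schwarz after splitting the weight, then $y=\sqrt{\epsilon}z$ giving $\|e^{b|\cdot|}\partial G_{\epsilon}\|_{L^2}\asymp\epsilon^{-3/4}$ per variable); your (i) is a more elementary approximate-identity argument than the paper's Fourier-side dominated-convergence proof, and your (ii) majorant construction parallels the paper's. However, there is a genuine gap in your justification of (iii-a). You bound
\begin{equation*}
\bigl|e^{b|x|_1}(G_{\epsilon}*\partial^{\beta}\varphi)(x)\bigr|\;\leq\;\|e^{b|\cdot|_1}\partial^{\beta}\varphi\|_{\infty}\int_{\mathbb{R}^d}G_{\epsilon}(y)\,e^{b|y|_1}\,dy ,
\end{equation*}
and claim the first factor is finite "by Lemma \ref{bounded}, absorbing a harmless loss in $b$." Lemma \ref{bounded} only gives $\sup_x|e^{b'|x|_1}\varphi(x)|<\infty$ for $b'$ \emph{strictly} less than $b$, and the loss is not harmless here: the statement of (iii-a) requires the full weight $e^{b|x|}$, and this exact weight is what is consumed later (in the proof of Lemma \ref{repro}, where $e^{-b|x|}$ factors are cancelled against it), so you cannot push the deficit $b-b'$ into a constant. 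With only $|\partial^{\beta}\varphi(x)|\lesssim e^{-b'|x|_1}$ your estimate produces an $e^{(b-b')|x|_1}$-growing bound, not a uniform one. Note also that the obvious repair by Cauchy--Schwarz as in your (iii-b) fails for (iii-a), since $\|e^{b|\cdot|}G_{\epsilon}\|_{L^2}\asymp\epsilon^{-1/4}$ per variable is not uniform in $\epsilon$. The correct fix is the one the paper uses: exploit the extra derivative available because $\beta\leq s$ and $\varphi\in X_b^{\bar{s}+\bar{1}}$, e.g.
\begin{equation*}
\bigl|e^{\bar{b}\cdot\bar{x}}(G_{\epsilon}*\partial^{\bar{\beta}}\varphi)(x)\bigr|
\lesssim\bigl\|e^{-\epsilon|\cdot+ib|^2/2}\,\mathcal{F}[\partial^{\bar{\beta}}\varphi](\cdot+ib)\bigr\|_{L^1}
\leq e^{db^2}\bigl\|\mathcal{F}[\partial^{\bar{\beta}}\varphi](\cdot+ib)\bigr\|_{L^1},
\end{equation*}
and the last norm is finite by pairing $\prod_i(1+\cdot_i+ib)^{-1}\in L^2$ against $\prod_i(1+\cdot_i+ib)\mathcal{F}[\partial^{\bar{\beta}}\varphi](\cdot+ib)\in L^2$, which uses precisely one more weighted derivative; equivalently, in physical space, $\|e^{b|\cdot|}\partial^{\beta}\varphi\|_{\infty}$ can be obtained from a weighted $H^1$-type embedding using $\partial^{\beta+1}\varphi$, but it does not follow from Lemma \ref{bounded}.

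A smaller instance of the same index bookkeeping occurs in your (i): when $\beta=s$ only one derivative of $\varphi$ remains beyond $\partial^{\beta}\varphi$, whereas Lemma \ref{bounded} as stated assumes $\bar{s}\geq\bar{2}$; you need the one-derivative variant of that pointwise decay bound (obtained exactly as above with a single factor $(1+\cdot_i+ib')^{-1}$), which is also what makes your dominating function in (ii) legitimate. With these repairs your argument goes through and, apart from (i), follows the same strategy as the paper.
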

\begin{proof}
(i)
\begin{eqnarray}
    |G_{\epsilon}*(\partial^{\beta} \varphi)(x)-(\partial^{\beta} \varphi)(x)|
    &\leq&
    (2 \pi)^{-d/2}
    \|\mathcal{F}[G_{\epsilon}*(\partial^{\beta} \varphi)(\cdot)-(\partial^{\beta} \varphi)(\cdot)] \|_{L^1} \nonumber \\
    &=&
    (2 \pi)^{-d/2}
    \|e^{-\epsilon |\cdot|^2/2}\widehat{(\partial^{\beta} \varphi)}(\cdot)-\widehat{(\partial^{\beta} \varphi)}(\cdot) \|_{L^1}
\end{eqnarray}
where we use the fact $\mathcal{F}[e^{-a|\cdot|^2/2}](\xi) = a^{-1/2} e^{-|\xi|^2/2a}$. $|e^{-\epsilon |\xi|^2/2}\widehat{(\partial^{\beta} \varphi)}(\xi)-\widehat{(\partial^{\beta} \varphi)}(\xi)|$ is dominated by $2|\widehat{(\partial^{\beta} \varphi)}(\xi)|$ and $|\widehat{(\partial^{\beta} \varphi)}| \in L^1$ because $0 \leq \beta \leq s$. Thus, by Lebesgue's dominated convergence theorem, we have the following expression:
\begin{equation}
    \lim_{\epsilon \rightarrow 0} |G_{\epsilon}*\varphi(x)-\varphi(x)|
    \leq
    \lim_{\epsilon \rightarrow 0}
    (2 \pi)^{-d/2}
    \|e^{-\epsilon |\cdot|^2/2}\widehat{\partial^{\beta}\varphi}(\cdot)-\widehat{\partial^{\beta}\varphi}(\cdot) \|
    =0
\end{equation}

(ii) We may assume $\epsilon < 1$ because the limit $\epsilon \rightarrow 0$ is considered. Therefore, the following equation is sufficient for $0 \leq \bar{\alpha},\bar{\beta} \leq \bar{s}$.
\begin{equation}
\label{&L2}
    \lim_{\epsilon \rightarrow 0}
    \left(
    \int_{\mathbb{R}} \left| \partial^{\bar{\beta}} (G_{\epsilon} *\varphi(x))- \partial^{\bar{\beta}} \varphi(x)\right|^2 |x^{2 \bar{\alpha}}| e^{2b'|x|} dx
    \right)^{1/2}
    =0
\end{equation}
If the exchange between integral and $\lim_{\epsilon \rightarrow 0}$ is justified, (\ref{&L2}) holds from the following equation: 
\begin{equation}
    \lim_{\epsilon \rightarrow 0}
    \partial^{\bar{\beta}} (G_{\epsilon} *\varphi(x))
    =
    \lim_{\epsilon \rightarrow 0}
     G_{\epsilon} * (\partial^{\bar{\beta}}\varphi(x))
    = \partial^{\bar{\beta}} \varphi(x)
    \qquad
    \text{uniformly}
\end{equation}
which is expressed in (i).
Thus, we obtain a function that dominates $G_{\epsilon} * (\partial^{\beta}\varphi(x))$ and justify (\ref{&L2}) by  Lebesgue's dominated convergence theorem. The function dominating $|G_{\epsilon} * (\partial^{\beta}\varphi(x))||x|^{\alpha} e^{b'|x|}$ is expressed as follows. If $\bar{x} \geq \bar{0}$
\begin{eqnarray}
     |e^{\bar{b} \cdot \bar{x}} G_{\epsilon} * (\partial^{\beta}\varphi)(x)|
    &\leq&
    (2 \pi)^{-d/2}
    \|\mathcal{F}[G_{\epsilon} * (\partial^{\bar{\beta}}\varphi)](\cdot+ib)\|_{L^1} \nonumber \\
    &=&
    \|e^{-\epsilon |\cdot+ib|^2/2} \mathcal{F}[\partial^{\bar{\beta}}\varphi](\cdot+ib) \|_{L^1}
    \nonumber \\
    &\leq&
    e^{db^2}
    \|\mathcal{F}[\partial^{\bar{\beta}} \varphi](\cdot+ib)\|_{L^1}
    \nonumber \\
    &<&
    \infty
\end{eqnarray}
where 
$\bar{b}=(b,b,\cdots,b) \in \mathbb{R}^d$ 
and we use 
$\mathcal{F}[e^{-a|\cdot|^2/2}](\xi) = a^{-1/2} e^{-|\xi|^2/2a}$. 
In the last inequality, 
$\| \mathcal{F}[\partial^{\beta} \varphi](\cdot + i b) \|_{L^1} \leq \| \prod_i^d (1+\cdot_i + i b)^{-1} \|_{L^2} \| \prod_i^d (1+ \cdot_i + i b ) \mathcal{F}[\partial^{\bar{\beta}} \varphi](\cdot + i b)\|_{L^2} < \infty$ 
because 
$\partial^{\bar{\beta}} \varphi \in X_b^{\bar{1}}(\mathbb{R}^d)$ . 
In the other case, similar inequalities hold. Thus, 
$|G_{\epsilon} * (\partial^{\beta}\varphi(x))||x|^{\alpha} e^{b'|x|}$ 
is dominated by 
$C|x|^{\alpha} e^{-(b-b')|x|_1}$ ($0<b'<b$) 
where 
$C$ 
is a positive constant independent of $\epsilon$. Then, by using the Lebesgue's dominated convergence theorem, (\ref{&L2}) is justified.

(iii) We may assume $\epsilon < 1$ because we take the limit $\epsilon \rightarrow 0$. We give a prove of the estimate of $|e^{b|x|_1} \partial^{\bar{\beta}+\bar{1}} (G_{\epsilon} * \varphi)(x)|$. The upper bound of $|e^{b|x|_1} \partial^{\bar{\beta}} (G_{\epsilon} * \varphi)(x)|$ is given in the same procedure.
If $\bar{x} \geq \bar{0}$,
\begin{eqnarray}
\label{sup3}
     |e^{\bar{b} \cdot \bar{x}} \partial^{\bar{\beta}+\bar{1}} (G_{\epsilon} * \varphi)(x)|
    &\leq&
    (2 \pi)^{-d/2}
    \| \mathcal{F}[ (\partial^{\bar{1}} G_{\epsilon})*(\partial^{\bar{\beta}} \varphi)](\cdot + ib) \|_{L^1}
    \nonumber \\
    &=&
    \| \mathcal{F}[ \partial^{\bar{1}} G_{\epsilon}](\cdot + ib) \mathcal{F}[ \partial^{\bar{\beta}} \varphi](\cdot + ib) \|_{L^1} \nonumber \\
    &\leq&
    \| \mathcal{F}[\partial^{\bar{1}} G_{\epsilon}](\cdot + ib) \|_{L^2}
    \| \mathcal{F}[\partial^{\bar{\beta}} \varphi](\cdot + ib)\|_{L^2} \nonumber \\
    &=&
    \| e^{\bar{b} \cdot (\cdot)} \partial^{\bar{1}} G_{\epsilon}\|_{L^2}
    \| e^{\bar{b} \cdot (\cdot)} \partial^{\bar{\beta}} \varphi\|_{L^2} \nonumber \\
    &\leq&
    \| e^{b|\cdot|_1} \partial G_{\epsilon}\|_{L^2}
    \| e^{b|\cdot|_1} \partial^{\bar{\beta}} \varphi\|_{L^2} 
\end{eqnarray}
where 
$\bar{b}=(b,b,\cdots,b) \in \mathbb{R}^d$.
In the third inequality, the Hölder inequality is used. In case other than $\bar{x} \geq \bar{0}$, similar upper bounds are obtained.

Thus, $\| e^{b|\cdot|_1} \partial^{\bar{1}} G_{\epsilon} \|_{L^2}$ can be estimated as follows:
\begin{eqnarray}
\label{comp}
    \| e^{b|\cdot|} \partial^{\bar{1}} G_{\epsilon} \|_{L^2}
    &=&
    \left(
    \int_{\mathbb{R}}
    e^{2b|x|} \left| \frac{x}{\epsilon} G_{\epsilon}(x) \right|^2
    dx
    \right)^{1/2} \nonumber \\ 
    &=&
    \left(
    \int_{\mathbb{R}}
    e^{2b\sqrt{\epsilon} |x|} \left| \frac{\sqrt{\epsilon}^dx}{\epsilon^d} G_{\epsilon}(\sqrt{\epsilon}x) \right|^2
    \sqrt{\epsilon}^d
    dx
    \right)^{1/2}
    \nonumber \\
    &=&
    \epsilon^{-3d/4}
    \left(
    \int_{\mathbb{R}}
    e^{2 \sqrt{\epsilon} b|x|} \left| x G_{1}(x) \right|^2
    dx
    \right)^{1/2} \nonumber \\
    &\leq&
    \epsilon^{-3d/4}
    \left(
    \int_{\mathbb{R}}
    e^{2b|x|} \left| x G_{1}(x) \right|^2
    dx
    \right)^{1/2}
\end{eqnarray}
In the second inequality, the change of variables $x \leftarrow x/\sqrt{\epsilon}$ is used.
From (\ref{sup3}) and (\ref{comp}), we obtain the following:
\begin{equation}
    \sup_{x \in \mathbb{R}} |e^{b|x|} \partial^{\beta+1} G_{\epsilon} * \varphi(x)|
    \leq
    C_2 \epsilon^{-3d/4}
\end{equation}
where $C_2$ is a positive constant defined by the following:
\begin{equation}
    C_2 := \| e^{b|\cdot|} \partial^{\beta} \varphi\|_{L^2} \left(
    \int_{\mathbb{R}}
    e^{2b|x|} \left| x G_{1}(x) \right|^2
    dx
    \right)^{1/2} < \infty
\end{equation}
$C_2$ does not depend on $\epsilon$.
\end{proof}

\begin{lemma}
\label{repro}
Let $0<b'<b$, and $\psi \in X_b^{\bar{s}+\bar{1}}(\mathbb{R}^d)$ such that $\int_{\mathbb{R}} \psi(x) = 1 $ and $\psi^t(x) = t \psi(tx)$. Then,
\begin{enumerate}[(i)]
    \item For all $\varphi \in X_b^{\bar{s}+\bar{1}}(\mathbb{R}^d)$, $ \psi^t*\varphi  \xrightarrow{t \rightarrow \infty} \varphi $ in $X_{b'}^{\bar{s}}(\mathbb{R}^d)$
    \item For all $f \in X_{b'}^{\bar{s}}(\mathbb{R}^d)'$, $ \psi^t*f \xrightarrow{t \rightarrow \infty} f $ in $X_{b'}^{\bar{s}}(\mathbb{R}^d)'$
\end{enumerate}
\end{lemma}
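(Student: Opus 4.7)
I would prove (i) by a dominated-convergence argument in the weighted $L^{2}$ scale, and then derive (ii) by a duality plus density argument that uses (i) on the denser subspace $X_{b}^{\bar{s}+\bar{1}}(\mathbb{R}^{d})$.

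For (i), since differentiation commutes with convolution, $D^{\bar{\alpha}}(\psi^{t}\ast\varphi) = \psi^{t}\ast D^{\bar{\alpha}}\varphi$, so it suffices to show
$\|e^{b'|x|_{1}} x^{\bar{\beta}}(\psi^{t}\ast u - u)\|_{L^{2}} \to 0$
for $u = D^{\bar{\alpha}}\varphi$ and all admissible $\bar{\alpha},\bar{\beta}\le\bar{s}$; note $u \in X_{b}^{\bar{1}}(\mathbb{R}^{d})$ and in particular $u$ is continuous by Lemma~\ref{bounded}. The first step is a uniform-in-$t$ domination: using $|x|_{1} \le |y|_{1} + |x-y|_{1}$ and $|x^{\bar{\beta}}| \lesssim |y|^{\bar{\beta}} + |x-y|^{\bar{\beta}}$, one bounds $|e^{b'|x|_{1}} x^{\bar{\beta}}(\psi^{t}\ast u)(x)|$ by a sum of two convolutions of the form $(|\psi^{t}|\, e^{b'|\cdot|_{1}}(1+|\cdot|^{\bar{\beta}})) \ast (e^{b'|\cdot|_{1}}(1+|\cdot|^{\bar{\beta}}) |u|)$. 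Young's inequality and the change of variables $z=ty$ then give an $L^{2}$-bound uniform for $t\ge 1$: after rescaling, the tail weight $e^{b'|z|_{1}/t}$ is dominated by $e^{b|z|_{1}}$, which is integrable against $\psi$ by assumption since $b'<b$. The second step is pointwise convergence: because $\psi^{t}$ is (up to normalization) an approximate identity and $u$ is continuous, $\psi^{t}\ast u(x) \to u(x)$ pointwise, hence $e^{b'|x|_{1}} x^{\bar{\beta}}(\psi^{t}\ast u - u)(x) \to 0$ pointwise. Lebesgue's dominated convergence theorem then closes (i).

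For (ii), I would use the duality definition $\langle \psi^{t}\ast f, \phi\rangle := \langle f, \widetilde{\psi}^{\,t}\ast \phi\rangle$ with $\widetilde{\psi}(x) = \psi(-x)$, and reduce the claim to: $\widetilde{\psi}^{\,t}\ast\phi \to \phi$ in $X_{b'}^{\bar{s}}(\mathbb{R}^{d})$ for every $\phi \in X_{b'}^{\bar{s}}(\mathbb{R}^{d})$. Part (i) already supplies this for $\phi \in X_{b}^{\bar{s}+\bar{1}}(\mathbb{R}^{d})$. A standard $\varepsilon/3$-argument extends the conclusion to all of $X_{b'}^{\bar{s}}(\mathbb{R}^{d})$, using (a) density of $X_{b}^{\bar{s}+\bar{1}}(\mathbb{R}^{d})$ in $X_{b'}^{\bar{s}}(\mathbb{R}^{d})$, which follows from Lemma~\ref{fril} applied to the Gaussian mollifier $G_{\epsilon}$ (since $G_{\epsilon}\ast\phi$ is $C^{\infty}$ and inherits the required exponential decay), and (b) uniform boundedness of the operators $\phi\mapsto \widetilde{\psi}^{\,t}\ast\phi$ on $X_{b'}^{\bar{s}}$, which follows from the very same convolution/rescaling estimates used in (i).

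\textbf{Main obstacle.} The genuinely technical step is the uniform-in-$t$ $L^{2}$ domination in (i), which must reconcile the growth of $e^{b'|x|_{1}} x^{\bar{\beta}}$ with a convolution against an increasingly concentrated kernel. The strict inequality $b' < b$ is essential, as it provides the margin that lets the exponential weights on $u$ be absorbed into those on $\psi$ after the change of variables $z = ty$; the same margin is exactly what makes the uniform-boundedness step in (ii) work, so the crux of the lemma is bookkeeping these weighted convolution estimates cleanly.
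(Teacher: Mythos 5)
Your outline for (i) has a genuine gap at the step "Lebesgue's dominated convergence theorem then closes (i)". What your weighted Young estimate produces is a bound on $\|e^{b'|\cdot|_1}(\cdot)^{\bar{\beta}}(\psi^t*u)\|_{L^2}$ that is uniform in $t\geq 1$, i.e.\ a uniform norm bound; but a uniform $L^2$ bound together with pointwise convergence does not give $L^2$ convergence (translating bumps are the standard counterexample), and DCT requires a single $t$-independent integrable majorant of $|e^{b'|x|_1}x^{\bar{\beta}}(\psi^t*u-u)(x)|^2$. The convolution bound you write down, $(|\psi^t|e^{b'|\cdot|_1}(1+|\cdot|^{\bar{\beta}}))*(e^{b'|\cdot|_1}(1+|\cdot|^{\bar{\beta}})|u|)$, is still $t$-dependent, so it is not itself a dominating function. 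The argument can be repaired, but only by injecting a pointwise decay estimate $|u(x)|\lesssim e^{-b''|x|_1}$ with $b'<b''<b$, which lets you absorb the weights and dominate the whole family by $Ce^{-\kappa|x|_1}$ uniformly in $t\geq 1$; note that for the top-order case $u=D^{\bar{s}}\varphi$ one only has $u\in X_b^{\bar{1}}$, so Lemma \ref{bounded} as stated (which needs index $\geq\bar{2}$) does not directly supply this decay and a separate (one-dimensional Sobolev-type) argument is needed. This missing domination is exactly where the real work lies, and it is what the paper's proof avoids by a different route: it mollifies $\varphi$ by Gaussians (Lemma \ref{fril}), splits $\|\psi^t*\varphi-\varphi\|$ into three terms, controls the mollified term by the mean value theorem with the quantitative blow-up $\epsilon^{-3d/4}$ of the extra derivative and a split of the $y$-integral at $|y|_2=t^{1/4}$, and finally couples $t=\epsilon^{-2}$.

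In (ii) your duality-plus-density scheme is sound in principle (and in fact more careful than the paper, which only tests against $\varphi\in X_b^{\bar{s}+\bar{1}}$), but the density step is not obtained the way you claim: Gaussian mollification improves smoothness, not decay, so $G_\epsilon*\phi$ for $\phi\in X_{b'}^{\bar{s}}$ lies only in spaces with decay exponent at most $b'$, never in $X_b^{\bar{s}+\bar{1}}$ with the strictly larger $b$. To get density of $X_b^{\bar{s}+\bar{1}}$ in $X_{b'}^{\bar{s}}$ you must first truncate by smooth cutoffs (which converges in the fixed-weight $X_{b'}^{\bar{s}}$ norm) and then mollify, so that the approximants are compactly supported and hence lie in $X_b^{\bar{s}+\bar{1}}$ for every $b$. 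With that correction, and with the uniform boundedness of $\phi\mapsto\widetilde{\psi}^{\,t}*\phi$ on $X_{b'}^{\bar{s}}$ that your weighted Young estimate does give, part (ii) goes through.
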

\begin{proof}
(i) From Lemma \ref{fril}, $\varphi^{(\epsilon)} \in C^{\infty}$ such that $\|\varphi^{(\epsilon)} - \varphi | X_{b'}^s(\mathbb{R}^d)\| \xrightarrow{\epsilon \rightarrow 0}0$ exists. By triangle inequality, we have the following expression:
\begin{eqnarray}
\label{triangle}
&&\|\psi^t*\varphi - \varphi | X_{b'}^{\bar{s}}(\mathbb{R})\| \nonumber \\
&\leq&
\|\psi^t*\varphi - \psi^t*\varphi^{(\epsilon)}| X_{b'}^{\bar{s}}(\mathbb{R}^d)\|
+
\|\varphi - \varphi^{(\epsilon)} | X_{b'}^{\bar{s}}(\mathbb{R}^d)\|
+
\|\psi^t*\varphi^{(\epsilon)} - \varphi^{(\epsilon)} | X_{b'}^{\bar{s}}(\mathbb{R}^d)\|
\end{eqnarray}
We estimate three terms in (\ref{triangle}) one by one.

(a) The first term $\|\psi^t*\varphi - \psi^t*\varphi^{(\epsilon)}| X_{b'}^{\bar{s}}(\mathbb{R}^d)\|$ in (\ref{triangle})
\begin{eqnarray}
\label{estimation}
    &&\|\psi^t*\varphi - \psi^t*\varphi^{(\epsilon)}| X_{b'}^s(\mathbb{R}^d)\| \nonumber \\
    &=&
    \sum_{0 \leq \bar{\alpha}, \bar{\beta} \leq \bar{s}}
    \left( \int_{\mathbb{R}^d} |x|^{2 \bar{\alpha}} |\partial^{\beta} (\psi^t*\varphi) - \partial^{\beta}(\psi^t*\varphi^{(\epsilon)})|^2 e^{2b'|x|_1} dx\right)^{1/2} \nonumber \\
    &=&
    \sum_{0 \leq \bar{\alpha}, \bar{\beta} \leq \bar{s}}
    \left( \int_{\mathbb{R}^d} |x|^{2 \alpha} | \psi^t*(\partial^{\bar{\beta}}\varphi) - \psi^t*(\partial^{\bar{\beta}}\varphi^{(\epsilon)})|^2 e^{2b'|x|_1} dx\right)^{1/2} \nonumber \\
    &\lesssim&
    \sum_{0 \leq \bar{\alpha}, \bar{\beta} \leq \bar{s}}
    \left( \int_{\mathbb{R}^d}   \left\{ \left| \psi^t e^{b'|\cdot|_1}\right|*\left|(\cdot)^{\bar{\alpha}}(\partial^{\bar{\beta}}\varphi - \partial^{\bar{\beta}}\varphi^{(\epsilon)}) e^{b'|\cdot|_1} \right| \right\}^2  dx\right)^{1/2} \nonumber \\
    &\leq&
    \sum_{0 \leq \alpha, \beta \leq s}
    \left\|\psi^t e^{b'|\cdot|_1}\right\|_{L^1}
    \left\|(\cdot)^{\bar{\alpha}}(\partial^{\bar{\beta}}\varphi - \partial^{\bar{\beta}}\varphi^{(\epsilon)}) e^{b'|\cdot|_1} \right\|_{L^2} \nonumber \\
    &\leq&
    \left\|\prod_{i=1}^d (1+|\cdot_i|)^{-1}\right\|_{L^2}
     \left\|\prod_{i=1}^d (1+|\cdot_i|) \psi^t e^{b'|\cdot|}\right\|_{L^2}
    \|\varphi^{(\epsilon)} - \varphi | X_{b'}^s(\mathbb{R}^d)\|
\end{eqnarray}
In the third inequality, $|x|^{2\alpha} \lesssim |x-y|^{2\alpha} +|y|^{2\alpha}$ and $e^{b'|x|} \leq e^{b'|x-y|}e^{b'|y|}$ are used. In the fourth inequality, young inequality for convolution is used. In the last inequality, the Hölder inequality is used. The last term in (\ref{estimation}) goes to $0$ as $\epsilon \rightarrow 0$ due to $\|\varphi^{(\epsilon)} - \varphi | X_{b'}^s(\mathbb{R}^d)\|$.

(b) The second term $\|\varphi - \varphi^{(\epsilon)} | X_{b'}^s(\mathbb{R}^d)\|$ in (\ref{triangle}) becomes arbitrary small as $\epsilon$ goes to $0$ due to the choice of $\varphi^{(\epsilon)}$.

(c) The third term $\|\psi^t*\varphi^{(\epsilon)} - \varphi^{(\epsilon)} | X_{b'}^s(\mathbb{R})\|$ in (\ref{triangle})
We obtain a similar procedure as in Proposition 2.14. given in \cite{Schott98}. Start from the estimation of $|\psi^t*(\partial^{\beta}\varphi^{(\epsilon)})(x) - (\partial^{\beta}\varphi^{(\epsilon)})(x)|$. Remember that $\int_{\mathbb{R}} \psi(x) = 1 $, then we have the following expression:
\begin{eqnarray}
\label{over}
|\psi^t*(\partial^{\beta}\varphi^{(\epsilon)})(x) - (\partial^{\beta}\varphi^{(\epsilon)})(x)|
\leq
\int_{\mathbb{R}^d} 
\left|
(\partial^{\beta}\varphi^{(\epsilon)})(x) -
(\partial^{\beta}\varphi^{(\epsilon)})(x-\frac{y}{t})\right|
\left| \psi(y) \right| dy
\end{eqnarray}
We split the integral in (\ref{over}) over $|y|_{2} \leq t^{1/4}$ and over $|y|_{2} > t^{1/4}$. On $|y|_{2} \leq t^{1/4}$, by the mean value theorem
\begin{eqnarray}
    \left|(\partial^{\bar{\beta}}\varphi^{(\epsilon)})(x) - (\partial^{\bar{\beta}}\varphi^{(\epsilon)})(x-\frac{y}{t})\right|
    &\leq&
    \frac{|y|}{t^d} \sup_{\bar{0} \leq \bar{\tau} \leq \bar{1}} \left|\partial^{\bar{\beta}+\bar{1}}\varphi^{(\epsilon)}(x-\frac{\bar{\tau} y}{t})\right|
    \nonumber \\
    &\leq&
    \frac{|y|}{t^d}
    \sup_{\bar{0} \leq \bar{\tau} \leq \bar{1}} \left|e^{-b|x-\frac{\bar{\tau} y}{t}|_1}  e^{b|x-\frac{\bar{\tau} y}{t}|}  \partial^{\bar{\beta}+\bar{1}}\varphi^{(\epsilon)}(x-\frac{\bar{\tau} y}{t})\right|
    \nonumber \\
    &\leq&
    \frac{|y|}{t^d}
    e^{-b|x|_1} 
    e^{b|\frac{ y}{t}|_1}
    \sup_{\bar{0} \leq \bar{\tau} \leq \bar{1}} \left|e^{b|x-\frac{\bar{\tau} y}{t}|}  \partial^{\beta+1}\varphi^{(\epsilon)}(x-\frac{\bar{\tau} y}{t})\right|
\end{eqnarray}
The upper bound of $ \sup_{\bar{0} \leq \bar{\tau} \leq \bar{1}} \left|e^{b|x-\frac{y}{t}|_1}  \partial^{\bar{\beta}+\bar{1}}\varphi^{(\epsilon)}(x-\frac{\bar{\tau} y}{t})\right| \leq C_2 \epsilon^{-3d/4}$ is given in lemma.\ref{fril} of (ii-b). Thus, the integral of (\ref{over}) over $|y|_{2} \leq t^{1/4}$ bounded by the following expression:
\begin{eqnarray}
\label{first}
    \int_{|y|_2 \leq t^{1/4}} \left| (\partial^{\beta}\varphi^{(\epsilon)})(x) - (\partial^{\beta}\varphi^{(\epsilon)})(x-\frac{y}{t})\right| \left| \psi(y) \right| dy 
    &\leq&
    C_2 e^{-b|x|_1}  \epsilon^{-3d/4} \int_{|y|_2 \leq t^{1/4}} \frac{|y|}{t^d} e^{b|\frac{y}{t}|_1} dy
    \nonumber \\
    &\leq&
    C_2' e^{-b|x|} \epsilon^{-3d/4} \times
    t^{d/4} \frac{t^{d/4}}{t^d}
    e^{b|\frac{t^{1/4}}{t}|_1}
    \nonumber \\
    &\leq&
    C_2' e^{b} e^{-b|x|_1} t^{-d/2} \epsilon^{-3d/4}
\end{eqnarray}
We assume $t > 2$ to claim $e^{b|t^{1/4}/t|} \leq e^{b}$ because we consider the limit $t \rightarrow \infty$.
Furthermore, let $\kappa$ be a real number such that $0 < \kappa < 1$. In this case, $1/t < 1 - \kappa/2$.
On $|y|_{2} > t^{1/4}$
\begin{eqnarray}
&& \int_{|y|_{2} > t^{1/4}} 
\left|
(\partial^{\bar{\beta}}\varphi^{(\epsilon)})(x) -
(\partial^{\bar{\beta}}\varphi^{(\epsilon)})(x-\frac{y}{t})\right|
\left| \psi(y) \right| dy \nonumber \\
&=&
\int_{|y|_{2} > t^{1/4}} 
\left| e^{-b|x|_1} e^{b|x|_1}
(\partial^{\bar{\beta}}\varphi^{(\epsilon)})(x) -
e^{-b|x-\frac{y}{t}|} e^{b|x-\frac{y}{t}|}
(\partial^{\bar{\beta}}\varphi^{(\epsilon)})(x-\frac{y}{t})\right|
\left| \psi(y) \right| dy \nonumber \\
&\leq&
e^{-b|x|_1} \cdot 2 \sup_{x \in \mathbb{R}} \left| e^{b|x|_1}
(\partial^{\bar{\beta}}\varphi^{(\epsilon)})(x) \right|
\int_{|y|_{2} > t^{1/4}} \left| e^{b|y|_1/t} \psi(y) \right| dy \nonumber \\
\label{second}
&\leq&
e^{-b|x|_1} \cdot 2 \sup_{x \in \mathbb{R}^d} \left| e^{b|x|_1}
(\partial^{\bar{\beta}}\varphi^{(\epsilon)})(x) \right|
\int_{|y|_{2} > t^{1/4}} \left| 2^{b|y|(1-\kappa/2)} \psi(y) \right| dy \nonumber \\
&\leq&
e^{-b|x|_1}
\cdot 2
\sup_{x \in \mathbb{R}^d} \left| e^{b|x|_1}
(\partial^{\bar{\beta}}\varphi^{(\epsilon)})(x) \right|
\left(
\int_{|y|_{2} > t^{1/4}} \left| \psi(y) \right|^2 e^{2b|y|_1} dy \right)^{1/2}
\left(
\int_{|y|_{2} > t^{1/4}} e^{-b\kappa|y|} dy \right)^{1/2} \nonumber \\
&\leq&
e^{-b|x|_1}
\cdot 2\left\| \psi | X_b^{\bar{0}}(\mathbb{R}^d) \right\|
\sup_{x \in \mathbb{R}^d} \left| e^{b|x|_1}
(\partial^{\bar{\beta}}\varphi^{(\epsilon)})(x) \right|
\left(
\int_{|y|_{2} > t^{1/4}} e^{-b\kappa|y|_1} dy \right)^{1/2}
\end{eqnarray}
In the last inequality, the Hölder inequality is used. $\sup_{x \in \mathbb{R}} \left| e^{b|x|_1}
(\partial^{\bar{\beta}}\varphi^{(\epsilon)})(x) \right|$ is bounded by some positive constant that does not depend on $\epsilon$ by lemma.\ref{fril} of (ii-a) when $0\leq \beta \leq s$. Thus, from (\ref{first}) and (\ref{second}), the following expression is obtained:
\begin{eqnarray}
    &&\int_{\mathbb{R}}
    |x|^{2 \alpha} |\psi^t*(\partial^{\beta}\varphi^{(\epsilon)})(x) - (\partial^{\beta}\varphi^{(\epsilon)})(x)|^2 e^{2b'|x|_1} dx \nonumber \\
    & \lesssim&
    \max
    \left\{t^{-d/2} \epsilon^{-3d/4}, \left(\int_{|y|_{2} > t^{d/4}} e^{-\kappa|y|_1} dy \right)^{1/2} \right\}
    \int_{\mathbb{R}} |x|^{2 \alpha} e^{-2(b-b')|x|} dx
\end{eqnarray}

Considering (a),(b), and (c) and let $t= \epsilon^{-2}$, we make $\|\psi^t*\varphi - \varphi | X_{b'}^s(\mathbb{R})\|$ arbitrary small for some $t$.

(ii) Let $\varphi \in X_b^{\bar{s}+\bar{1}}(\mathbb{R}^d)$, $f \in X_{b'}^{\bar{s}}(\mathbb{R}^d)'$ and $\tilde{\psi^t}(x) = \psi^t(-x)$
\begin{eqnarray}
|\langle f-\psi^t*f, \varphi \rangle|
&=&
|\langle f, \varphi - \tilde{\psi^t}*\varphi \rangle| \nonumber \\
&\leq&
\| f | X_{b'}^{\bar{s}}(\mathbb{R}^d)' \| \|\varphi - \tilde{\psi^t}*\varphi |X_{b'}^{\bar{s}} (\mathbb{R}^d) \| \nonumber \\
&\xrightarrow{t \rightarrow \infty}& 0
\end{eqnarray}
\end{proof}

\subsection{Reproducing formula on \texorpdfstring{$X_b^{\bar{s}}(\mathbb{R}^d)$}{Lg}}

\label{appendix_2}
$\quad \; \; \;$ Following result is a modification of Theorem 1.6 in \cite{Rychkov01} so that reproducing formula fits to our setting. 
\begin{lemma}
\label{kernel}
\begin{enumerate}[(1)]
    \item Let $s \geq 1$, $\varphi_0 \in X_b^{s+1}(\mathbb{R})$ and $\varphi(x) := \varphi_0(x)- 2^{-1} \varphi_0(2^{-1}x)$. Then, there exist $\psi_0, \psi \in X_b^{s+1}(\mathbb{R})$
satisfying
\begin{enumerate}
    \item $(D^{\alpha} \widehat{\psi})(0)=0$ for $\alpha = 0,\cdots,L$ where $L$ is an arbitrary natural number.
    \item
    $\left(\sum_{j=0}^{N} \varphi_j * \psi_j \right)*f \xrightarrow{N \rightarrow \infty} f$ in $X_{b'}^{s}(\mathbb{R})'$ for all $f \in X_{b'}^s(\mathbb{R})'$
\end{enumerate}
for all $b \in \mathbb{R}$ such that $0<b'<b$.
\item Let $s \geq 1$, $\varphi_0^i \in X_b^{s+1}(\mathbb{R})$ and $\varphi^i(x_i) := \varphi_0^i(x_i)- 2^{-1} \varphi_0^i(2^{-1}x_i)$ and $\psi_0^i, \psi^i \in X_b^{s+1}(\mathbb{R})$ are functions constructed from $\varphi_0^i$ and $\varphi^i$ in \textit{(1)}. Let $\boldsymbol{\varphi}_{\bar{j}}$ and $\boldsymbol{\psi}_{\bar{j}}$ be the following:
\begin{equation}
    \boldsymbol{\varphi}_{\bar{j}}(x)
    := \prod_{i=1}^{d} \varphi_{j_i}^{i}(x_i)
    \qquad
    \boldsymbol{\psi}_{\bar{j}}(x)
    := \prod_{i=1}^{d} \psi_{j_i}^{i}(x_i)
\end{equation}
The following expression is then obtained:
\begin{equation}
\label{dtimes}
    \left(\sum_{|\bar{j}|_{\infty} \leq N, \bar{j} \in \mathbb{N}_0^d} \boldsymbol{\varphi}_{\bar{j}} * \boldsymbol{\psi}_{\bar{j}} \right)*f \xrightarrow{N \rightarrow \infty} f \quad \text{in} \quad X_{b'}^{\bar{s}}(\mathbb{R}^d)' \quad \text{for all} \quad f \in X_{b'}^{\bar{s}}(\mathbb{R}^d)'
\end{equation}

\end{enumerate}
\end{lemma}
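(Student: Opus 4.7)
The strategy is to realise the partial sums $\sum_{j=0}^{N}\varphi_j*\psi_j$ as dyadic dilates of a single integrable kernel $\Theta$ and then invoke Lemma~\ref{repro}(ii). For Part~(1) I would set $A(\xi):=\widehat{\varphi_0}(\xi)\widehat{\psi_0}(\xi)$ and observe that \emph{any} choice of $\psi$ satisfying
\begin{equation*}
\widehat{\varphi}(\xi)\widehat{\psi}(\xi)=A(\xi)-A(2\xi)
\end{equation*}
yields, by induction on $N$, the telescoping identity
\begin{equation*}
\sum_{j=0}^{N}\widehat{\varphi_j}(\xi)\widehat{\psi_j}(\xi)=A(2^{-N}\xi),
\end{equation*}
whose inverse Fourier transform is, up to the $(2\pi)^{1/2}$ factors of the paper's convention, a dyadic dilate of $\Theta:=\varphi_0*\psi_0$. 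Once $\psi_0$ is normalised so that $\int\Theta=1$, Lemma~\ref{repro}(ii) will then deliver $\sum_{j=0}^{N}\varphi_j*\psi_j*f\to f$ in $X_{b'}^{s}(\mathbb{R})'$ for every $f\in X_{b'}^{s}(\mathbb{R})'$.

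The bulk of the work lies in choosing $\psi_0\in X_b^{s+1}(\mathbb{R})$ so that the quotient
\begin{equation*}
\widehat{\psi}(\xi):=\frac{A(\xi)-A(2\xi)}{\widehat{\varphi}(\xi)}
\end{equation*}
is well defined, lies in $X_b^{s+1}(\mathbb{R})$, and satisfies $(D^{\alpha}\widehat{\psi})(0)=0$ for $\alpha=0,\dots,L$. Because $\widehat{\varphi}(\xi)=\widehat{\varphi_0}(\xi)-\widehat{\varphi_0}(2\xi)$ vanishes to exact order $1$ at the origin, the required smoothness and vanishing of $\widehat{\psi}$ reduce to asking that the Taylor coefficients of $A$ at $\xi=0$ up to order $L+1$ vanish, with the constant term fixed by the normalisation of $\Theta$. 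I would satisfy these $L+2$ scalar conditions by a deformation argument: start from a Gaussian $G\in X_b^{s+1}(\mathbb{R})$, write $\psi_0=cG+\eta$, and note that $(D^k\widehat{\eta})(0)$ is, up to constants, the moment $\int x^k\eta(x)\,dx$. Prescribing finitely many moments is a finite-codimension constraint in the infinite-dimensional space $X_b^{s+1}(\mathbb{R})$, so a suitable $\eta$ exists.

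To verify $\psi\in X_b^{s+1}(\mathbb{R})$ I would use that the exponential decay of $\varphi_0$ and $\psi_0$ extends $\widehat{\varphi_0}$ and $\widehat{\psi_0}$ holomorphically to a strip of width $b$, so that $A(\xi)-A(2\xi)$ is holomorphic there; the cancellation at the origin turns the division by $\widehat{\varphi}$ into a removable singularity. Since $\widehat{\varphi}$ is bounded below on every annulus, $\widehat{\psi}$ is holomorphic in the whole strip and decays at infinity comparably to $\widehat{\psi_0}$; Plancherel combined with Lemma~\ref{bounded} and Lemma~\ref{conv} then supplies the weighted $L^2$-bounds on $e^{b|x|}x^{\beta}D^{\alpha}\psi$ that define $X_b^{s+1}(\mathbb{R})$. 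Part~(2) follows from Part~(1) applied coordinate-wise: the tensor-product structure gives
\begin{equation*}
\sum_{|\bar{j}|_{\infty}\le N}\widehat{\boldsymbol{\varphi}_{\bar{j}}}(\xi)\widehat{\boldsymbol{\psi}_{\bar{j}}}(\xi)=\prod_{i=1}^{d}A_i(2^{-N}\xi_i),
\end{equation*}
which corresponds to convolution with a dyadic dilate of $\boldsymbol{\Theta}=\bigotimes_{i=1}^d(\varphi_0^i*\psi_0^i)$, and a $d$-dimensional version of Lemma~\ref{repro}(ii) (obtained by iterating the 1D statement in each variable or by repeating its proof with a vector dilation parameter) concludes the argument.

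The principal obstacle I anticipate is the simultaneous control of the three requirements on $\psi$: the finitely many Taylor vanishing conditions at $\xi=0$, the removable-singularity analysis that upgrades the algebraic quotient to a holomorphic function on a strip, and the weighted $L^2$-membership needed for $\psi\in X_b^{s+1}(\mathbb{R})$. The first is finite-dimensional linear algebra on moments, the second is complex-analytic, and the third is analytic estimation; reconciling them forces $\psi_0$ to be constrained enough to kill Taylor coefficients while remaining flexible enough to inherit the exponential decay and higher-order moment conditions of $X_b^{s+1}(\mathbb{R})$.
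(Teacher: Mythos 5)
Your overall frame (telescoping partial sums to a single dilated kernel and then invoking Lemma~\ref{repro}(ii)) matches the paper, but the core of your construction has a genuine gap: you define $\widehat{\psi}(\xi)=\bigl(A(\xi)-A(2\xi)\bigr)/\widehat{\varphi}(\xi)$, i.e.\ you divide by $\widehat{\varphi}$. Nothing in the hypotheses gives you any nondegeneracy of $\widehat{\varphi}=\widehat{\varphi_0}(\cdot)-\widehat{\varphi_0}(2\,\cdot)$ beyond $\widehat{\varphi_0}(0)\neq 0$: for a general $\varphi_0\in X_b^{s+1}(\mathbb{R})$ the function $\widehat{\varphi}$ can vanish at real points $\xi\neq 0$ (and at complex points of the strip $|\Im\xi|<b$), so your claim that it is ``bounded below on every annulus'' is unjustified and false in general — this is exactly the Tauberian-type hypothesis that classical Calder\'on-reproducing arguments need and that this lemma deliberately avoids. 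Even if $\widehat{\varphi}$ had no zeros, it tends to $0$ at infinity (Riemann--Lebesgue), while the numerator contains $\widehat{\varphi_0}(2\xi)\widehat{\psi_0}(2\xi)$, which is not divisible by $\widehat{\varphi_0}(\xi)-\widehat{\varphi_0}(2\xi)$ in any controlled way; so the quotient need not be bounded, let alone holomorphic on the strip with the weighted $L^2$ bounds required for $\psi\in X_b^{s+1}(\mathbb{R})$. The moment-prescription (``deformation'') step and the order-of-vanishing count at $\xi=0$ (note also that $\widehat{\varphi}$ need not vanish to \emph{exact} order $1$ there, since $\widehat{\varphi_0}'(0)$ may vanish) are the easy finite-dimensional part; the division is where the argument breaks.

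The paper's proof (following Rychkov's Theorem 1.6) circumvents division entirely. It sets $g_0=\varphi_0*\varphi_0$ (normalized so $\int g_0=1$), $g=g_0-2^{-1}g_0(2^{-1}\cdot)=\varphi*(\varphi_0+2^{-1}\varphi_0(2^{-1}\cdot))$, uses Lemma~\ref{repro} to get $(\sum_{j\le N}g_j)*f\to f$, and then takes the $K$-fold convolution power with $K=L+2$, so that $(\sum_j g_j)^{*K}*f=f$. Regrouping the multinomial expansion gives blocks $G^j=\sum_{m\ge 1}\binom{K}{m}g_j^{*m}*(\sum_{k>j}g_k)^{*(K-m)}$, and since every term contains at least one factor $g_j=\varphi_j*(\dots)$, each block factors as $\varphi_j*\psi_j$ with $\psi_j$ given by an explicit product of known functions; membership in $X_b^{s+1}$ follows from closure under convolution (Lemma~\ref{conv}), and the vanishing moments $D^t\widehat{\psi_j}(0)=0$ for $t\le K-2=L$ follow from the order of vanishing of $\widehat{g_j}$ and $1-\widehat{g_0}(2^{-j}\cdot)$ at the origin. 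If you want to salvage your route you would have to add a nonvanishing assumption on $\widehat{\varphi}$ (plus quantitative lower bounds on the strip), which is strictly stronger than what the lemma assumes; otherwise you should switch to the convolution-power construction. Your reduction of part (2) to part (1) via the tensor/box structure is fine once part (1) is in place.
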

\begin{proof}
(proof of (1))
This proof is almost identical to the proof of theorem 1.6 in \cite{Rychkov01}.
We may assume $\int_{\mathbb{R}} \varphi_0*\varphi_0(x) dx = 1$.
Let
\begin{eqnarray}
    && g_0(x) := \varphi_0 * \varphi_0(x) \\
    \label{g}
    && g(x) := g_0(x) - 2^{-1} g_0(2^{-1} x)
    = \varphi *( \varphi_0(\cdot) + 2^{-1} \varphi_0(2^{-1}\cdot)) \\
    && g_j(x)=2^j g(2^j x) \\
    &&g_j(x) = \varphi_j *( 2^j  \varphi_0(2^j \cdot) + 2^{j-1} \varphi_0(2^{j-1}\cdot))
\end{eqnarray}
Here, $g(x)=\varphi *( \varphi_0(\cdot) + 2^{-1} \varphi_0(2^{-1}\cdot))(x)$ can be calculated as follows:
\begin{eqnarray}
    g(x) 
        &=& g_0(x) - 2^{-1}g(2^{-1}x) \nonumber \\
        &=& \varphi_0*\varphi_0(x) - 2^{-1}\varphi_0*\varphi_0(2^{-1}x) \nonumber \\
        &=&
        \int_{\mathbb{R}} \varphi_0(y)\varphi_0(x-y)dy
        -
        2^{-1}
        \int_{\mathbb{R}} \varphi_0(y)\varphi_0(x/2-y)dy \nonumber \\
        &=&
        \int_{\mathbb{R}} \varphi_0(y)\varphi_0(x-y)dy
        -
        2^{-2}
        \int_{\mathbb{R}} \varphi_0(2^{-1}y)\varphi_0(2^{-1}x-2^{-1}y)dy \nonumber \\
        && \quad
        + 2^{-1} \int_{\mathbb{R}} \varphi_0(2^{-1}y) \varphi_0(x-y)dy
        - 2^{-1} \int_{\mathbb{R}} \varphi_0(2^{-1}y) \varphi_0(x-y)dy \nonumber \\
        &=&
        \int_{\mathbb{R}} (\varphi_0(y)-2^{-1}\varphi_0(2^{-1}y) )\varphi_0(x-y)dy \nonumber \\
        && \quad
        +\int_{\mathbb{R}}
        2^{-1}\varphi_0(2^{-1}y)
        (\varphi_0(x-y)
        -2^{-1}\varphi_0(2^{-1}x-2^{-1}y))dy \nonumber \\
        &=&
        \varphi*\varphi_0 + \varphi*(2^{-1}\varphi_0(2^{-1}\cdot))
        =
        \varphi *( \varphi_0(\cdot) + \varphi_0(2^{-1}\cdot))(x)
\end{eqnarray}

From Lemma \ref{repro} of (ii), we have the following expression:
\begin{equation}
    \left(\sum_{j=0}^{N} g_j(\cdot) \right) * f = 2^N \varphi_0 * \varphi_0(2^N \cdot) *f
    \xrightarrow{N \rightarrow \infty} f \qquad \text{in} \quad X_{b'}^s(\mathbb{R})'
\end{equation}
Convoluting $\sum_{j=0}^{\infty} g_j(\cdot)$ $K:=L+2$ times reproduces $f$.
\begin{equation}
    \left(\sum_{j=0}^{\infty} g_j(\cdot) \right)^{*K} * f = f
\end{equation}
where $\zeta^{*K}$ indicates $K$ times convolution of $\zeta$.

Define $G^j$, $\psi_0$ and $\psi$ by the following expression:
\begin{eqnarray}
    \widehat{G^j} &:=& 
    \sum_{m=1}^K {}_K C_m \widehat{g_j}^{m} (1 - \sum_{k=0}^{j} \widehat{g_k})^{K-m}  \\
    \widehat{\psi_0} &:=& \widehat{\varphi_0}
    \sum_{m=1}^K {}_K C_m
    \widehat{g_0}^{m-1}
    (1 - \widehat{g_0})^{K-m} \\
    \widehat{\psi_j} &:=& (\widehat{\varphi_0}(2^{-j} \cdot)+\widehat{\varphi_0}(2^{1-j} \cdot))
    \sum_{m=1}^K {}_K C_m
    \widehat{g_j}^{m-1}
    (1 - \sum_{k=0}^{j} \widehat{g_k})^{K-m}
\end{eqnarray}
Note that $G^j$, $\psi_0$ and $\psi$ satisfy
\begin{eqnarray}
    \label{G}
    &&
    G^j = \sum_{m=1}^{K} {}_K C_m g_j^{*m} * \left( \sum_{k=j+1}^{\infty} g_k \right)^{*(K-m)}  \\
    \label{psi_0}
    &&
    \psi_0 = \varphi_0 * \sum_{m=1}^{K} {}_K C_m g_0^{*(m-1)} * \left(\sum_{k=1}^{\infty} g_k \right)^{*(K-m)}
     \\
    \label{psi_j}
    &&
    \psi_j = (2^j \varphi_0(2^j \cdot) + 2^{j-1} \varphi_0(2^{j-1}\cdot))* \sum_{m=1}^{K}
    {}_K C_m 
    g_j^{*(m-1)} * (\sum_{k=j+1}^{\infty} g_k)^{*(K-m)}
\end{eqnarray}
Note $G^{0} = \psi_0*\varphi_0$ and $G^{j} = \psi_j*\varphi_j$ by
\begin{eqnarray*}
     \psi_0*\varphi_0 &=& \varphi_0 * \varphi_0 * \sum_{m=1}^{K} {}_K C_m g_0^{*(m-1)} * \left(\sum_{k=1}^{\infty} g_k \right)^{*(K-m)} \nonumber \\
          &=& 
          g_0 * \sum_{m=1}^{K} {}_K C_m g_0^{*(m-1)} * \left(\sum_{k=1}^{\infty} g_k \right)^{*(K-m)} \nonumber \\
          &=&
          \sum_{m=1}^{K} {}_K C_m g_0^{*m} * \left(\sum_{k=1}^{\infty} g_k \right)^{*(K-m)}
          \nonumber \\
          &=&G^0
\end{eqnarray*}
and
\begin{eqnarray*}
    \psi_j*\varphi_j 
    &=&
    \varphi_j*(2^j \varphi_0(2^j \cdot) + 2^{j-1} \varphi_0(2^{j-1}\cdot))* \sum_{m=1}^{K}
    {}_K C_m 
    g_j^{*(m-1)} * (\sum_{k=j+1}^{\infty} g_k)^{*(K-m)} \nonumber \\
    &=&
    \sum_{m=1}^{K}
    {}_K C_m 
    g_j^{*m} * (\sum_{k=j+1}^{\infty} g_k)^{*(K-m)} \nonumber \\
    &=&
    G^j
\end{eqnarray*}
Considering (\ref{g}), (\ref{G}), (\ref{psi_0}) and (\ref{psi_j}) into considerations, we have the following expression:
\begin{equation}
\label{reproducing_}
    f
    =
    \left(\sum_{j=0}^{\infty} g_j(\cdot) \right)^{*K} *f
    =
    \left( \sum_{j=0}^{\infty} G^j \right)*f
    =
    \left(
    \sum_{j=0}^{\infty} \psi_j * \varphi_j
    \right)*f
\end{equation}
Second equality in (\ref{reproducing_}) can be obtained from
\begin{eqnarray*}
    &&
    \left( \sum_{j=0}^{\infty} g_j \right)^{*K} \nonumber \\
    &=&
    \sum_{m=1}^K
    {}_K C_m
    g_0^{*m} *\left(\sum_{k=1}^{\infty} g_k \right)^{*(K-m)}
    +
    \sum_{m=1}^K {}_K C_m g_1^{*m}*\left(\sum_{k=2}^{\infty} g_k \right)^{*(K-m)}
    \nonumber \\
    && \qquad \qquad \qquad \qquad \qquad \qquad \qquad \qquad
    +
    \cdots
    +
    \sum_{m=1}^K {}_K C_m g_j^{*m}*\left(\sum_{k=j+1}^{\infty} g_k \right)^{*(K-m)}
    +
    \cdots \nonumber \\
    &=&
    G^{0} 
    +
    G^{1}   
    +
    \cdots
    +
    G^{j}
    +\cdots \nonumber \\
    &=&
    \sum_{j=0}^{\infty} G^j
\end{eqnarray*}
The first equality is calculated so that $j$-th term contains $\{ g_k \}_{k \geq j}$ ($g_k$s, whose index is larger than $j$.) 
For the vanishing moment of $\psi_j$ $j \geq 1$
\begin{equation}
    D^{t}\mathcal{F}[\psi_j](0)
    =
    \sum_{\alpha=0}^{t} \sum_{\beta = 0}^{\alpha} (D^{t-\alpha}\widehat{\varphi_0}(0) + 2^{2(t-\alpha)}D^{t-\alpha}\widehat{\varphi_0}(0)) \sum_{m=1}^{K} D^{\alpha-\beta}\widehat{g}_j^{m-1}(0) D^{\beta}(\sum_{k=j+1}^{\infty} \widehat{g}_k)^{(K-m)}(0)
\end{equation}
becomes 0 when $t \leq K-2(=L)$.  From Lemma \ref{conv}, convolutions of functions from $X_b^{s+1}(\mathbb{R}^d)$ reproduce a function in $X_b^{s+1}(\mathbb{R})$. Thus, $\psi_0, \psi_j \in X_b^{s+1}(\mathbb{R}^d)$. $\psi_0$ and $\psi_j$ are desired functions.

(proof of (2)) (\ref{dtimes}) is straightforward from the following calculation:
\begin{eqnarray*}
    \left(\sum_{|\bar{j}|_{\infty} \leq N, \bar{j} \in \mathbb{N}_0^d} \boldsymbol{\varphi}_{\bar{j}} * \boldsymbol{\psi}_{\bar{j}} \right)*f
    &=&
    \left(\sum_{k=0}^{N} \sum_{\substack{\bar{j} \in \mathbb{N}_0^d \\ |\bar{j}|_\infty =k}}
    \boldsymbol{\varphi}_{\bar{j}} * \boldsymbol{\psi}_{\bar{j}} \right)*f \nonumber \\
    &=&
    \left(\sum_{k=0}^{N} \sum_{\substack{\bar{j} \in \mathbb{N}_0^d \\ |\bar{j}|_\infty =k}}
    \boldsymbol{\varphi}_{\bar{j}} * \boldsymbol{\psi}_{\bar{j}} \right)*f \nonumber \\
    &=&
    2^N \boldsymbol{\varphi}_{\bar{0}} *\boldsymbol{\varphi}_{0}(2^N \cdot) *f \nonumber \\
    &\xrightarrow{N \rightarrow \infty}& f \quad \text{in} \quad X_{b'}^{\bar{s}}(\mathbb{R}^d)'
\end{eqnarray*}
In the third equality, a calculation similar to (\ref{minus}) is performed. Limit $N \rightarrow \infty$ is justified by Lemma \ref{repro}.
\end{proof}

\subsection{Technical lemmas for theorem \ref{equivalence} and theorem \ref{supequi}}
\label{appendix_3}
$\quad \; \; \;$ The topic in this subsection is Lemma \ref{tech}, which is essential for proving Theorem \ref{equivalence} and Theorem \ref{supequi}. To prove Lemma \ref{tech}, we use some results on the analytic function on the strip domain. Theorem \ref{L2} and Theorem \ref{analy} are from Theorem IX.13 and Theorem IX.14 of \cite{Michael75}. For self-containment, we detail their proofs in Appendix \ref{last_appendix} because their proofs are omitted in \cite{Michael75}. Next Lemma \ref{expdecay} is a preparation for Lemma \ref{tech} and proved by using Theorem \ref{L2} and Theorem \ref{analy}.

\begin{theorem}
\label{L2}
(Theorem IX.13 in \cite{Michael75})
Let f be in $L^2(\mathbb{R})$. Then $e^{b|x|}f \in L^2(\mathbb{R})$ for all $b < a$ if and only if $\widehat{f}$ has an analytic continuation to the set $\{\zeta | |\Im \zeta|<a\}$ with the property that for each $\eta \in \mathbb{R}$ with $|\eta| < a$,  $\widehat{f}(\cdot + i \eta) \in L^2(\mathbb{R})$ and for any $b < a$
\begin{equation}
\label{supsup}
    \sup_{|\eta| \leq b} \|\widehat{f}(\cdot + i \eta) \|_{L^2} < \infty
\end{equation}

\end{theorem}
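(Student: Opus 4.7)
The plan is to identify, for each $\eta$ with $|\eta| < a$, the horizontal slice $\widehat{f}(\cdot + i\eta)$ of the sought-after analytic extension with the Fourier transform of the weighted function $e^{\eta \cdot}f$, and to deduce both implications from this correspondence via Plancherel's theorem.

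For the forward direction, assume $e^{b|\cdot|}f \in L^2(\mathbb{R})$ for every $b < a$. Given $|\eta| < a$, choose $b \in (|\eta|, a)$; since $|e^{\eta x}| \leq e^{b|x|}$ pointwise, the function $e^{\eta \cdot}f$ lies in $L^2$, and I \emph{define} the candidate extension by $\widehat{f}(\xi + i\eta) := \mathcal{F}[e^{\eta \cdot}f](\xi)$. Plancherel immediately yields
\begin{equation*}
\|\widehat{f}(\cdot + i\eta)\|_{L^2} = \|e^{\eta \cdot}f\|_{L^2} \leq \|e^{b|\cdot|}f\|_{L^2} \qquad (|\eta| \leq b),
\end{equation*}
which is the bound (\ref{supsup}). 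Analyticity on $\{|\Im\zeta| < a\}$ follows from Morera's theorem: for any closed triangle $T$ contained in the strip, writing $c := \sup_{\zeta \in T}|\Im\zeta|$ and choosing $c < b' < a$, the Cauchy--Schwarz estimate $\int |f(x)| e^{c|x|}\,dx \leq \|e^{b'|\cdot|}f\|_{L^2}\|e^{(c-b')|\cdot|}\|_{L^2} < \infty$ justifies Fubini in
$\oint_T \widehat{f}(\zeta)\,d\zeta = \tfrac{1}{\sqrt{2\pi}} \int f(x)\bigl(\oint_T e^{-i\zeta x}\,d\zeta\bigr)\,dx = 0$, the inner contour integral vanishing by analyticity of $\zeta \mapsto e^{-i\zeta x}$.

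For the reverse direction, suppose $\widehat{f}$ admits such an analytic continuation, and set $h_\eta := \mathcal{F}^{-1}[\widehat{f}(\cdot + i\eta)] \in L^2$ for each $|\eta| < a$. The crux is to establish $h_\eta(x) = e^{\eta x} f(x)$ almost everywhere; once this is known, taking $\eta = \pm b$ and splitting the integral by the sign of $x$ yields
\begin{equation*}
\int_{\mathbb{R}} |f(x)|^2 e^{2b|x|}\,dx \leq \|\widehat{f}(\cdot + ib)\|_{L^2}^2 + \|\widehat{f}(\cdot - ib)\|_{L^2}^2 < \infty.
\end{equation*}
To identify $h_\eta$, regularize by $\widehat{f}_n(\zeta) := \widehat{f}(\zeta)\, e^{-\zeta^2/n}$: this stays analytic on the strip, converges to $\widehat{f}$ on each horizontal line in $L^2$ (uniformly in $|\eta| \leq b$ by hypothesis (\ref{supsup})), and decays rapidly enough along horizontals that Cauchy's theorem on a rectangle with vertices $\pm R,\,\pm R + i\eta$ gives, upon $R \to \infty$, the identity $\mathcal{F}^{-1}[\widehat{f}_n(\cdot + i\eta)](x) = e^{\eta x}\mathcal{F}^{-1}[\widehat{f}_n](x)$. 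Passing to a pointwise-a.e.\ subsequence as $n \to \infty$ finishes the identification.

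The main obstacle is precisely this contour-shift step for merely-$L^2$ data: without extra decay, the vertical sides of the Cauchy rectangle need not vanish and the Fourier inversion integrals need not converge absolutely. The Gaussian mollifier is tailored to enforce both properties simultaneously; the delicate point is verifying that the vertical contributions do vanish as $R \to \infty$ (uniformly in $n$, using the hypothesized $L^2$ bound on horizontal slices together with the $e^{-\xi^2/n}$ factor) and that the two $L^2$ convergences on either side of the identity as $n \to \infty$ allow extraction of a common pointwise-a.e.\ subsequence reconciling $h_\eta$ with $e^{\eta \cdot} f$.
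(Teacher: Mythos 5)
Your argument is correct, and its second half takes a genuinely different route from the paper. In the forward direction you and the paper do essentially the same thing (you define the slices as $\mathcal{F}[e^{\eta\cdot}f]$ and get analyticity via Morera plus Fubini, the paper differentiates under the integral using dominated convergence; Plancherel gives (\ref{supsup}) in both cases). In the reverse direction, however, the paper never touches $\widehat{f}$ directly: it pairs $\widehat{f}$ against $\widehat{g}$ for $g\in C_0^{\infty}$, borrows the decay needed for the contour shift from the Paley--Wiener estimate $|\widehat{g}(\xi)|\lesssim e^{R|\Im\xi|}(1+|\xi|)^{-N-1}$, kills the vertical sides of the rectangle by a Fubini--Tonelli/subsequence-in-$L$ argument, and then identifies $e^{\eta x}f=\mathcal{F}^{-1}[\widehat{f}(\cdot+i\eta)]$ by duality. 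You instead regularize $\widehat{f}$ itself by the Gaussian factor $e^{-\zeta^2/n}$, shift the contour in the inversion integral to get $\mathcal{F}^{-1}[\widehat{f}_n(\cdot+i\eta)]=e^{\eta\cdot}\mathcal{F}^{-1}[\widehat{f}_n]$, and pass to the limit through a.e.\ subsequences. Both are standard and both work; the duality route avoids any pointwise information about $\widehat{f}$ (all decay comes from $\widehat{g}$), while your route is more direct but requires a justification you only sketch: the $L^2$ bound (\ref{supsup}) gives no pointwise decay of $\widehat{f}$ on the vertical segments, so you should either run the same Fubini--Tonelli argument to extract a sequence $R_k\to\infty$ along which the vertical contributions vanish (for each fixed $n$; the uniformity in $n$ you mention is not needed), or first upgrade (\ref{supsup}) to a locally uniform pointwise bound $|\widehat{f}(\zeta)|\lesssim_{b',b}\sup_{|y|\le b}\|\widehat{f}(\cdot+iy)\|_{L^2}$ on interior strips $|\Im\zeta|\le b'<b$ via the mean-value property, after which the Gaussian factor alone kills the vertical sides. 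With that detail supplied, your proof is complete.
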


\begin{theorem} (Theorem IX.14 in \cite{Michael75})

\label{analy}
$\widehat{f}(\xi)$ is a function that satisfies condition (i) and (ii).

(i) $\widehat{f}(\xi)$ has an analytic continuation to the set $\{\xi  \: : |\Im \xi| < a \}$ for some $a>0$.

(ii) Let $y \in \mathbb{R}$.
\begin{equation*}
\sup_{|y| < b}\| \widehat{f}(\cdot + i y)\|_1 < \infty
\end{equation*}
for any $0 < b <a$

Then, a constant $C_b$ should exists so that,
\begin{equation}
    |f(x)| \leq C_b e^{- b |x|}
\end{equation}
where $f(x)$ is a Fourier inverse of $\widehat{f}(\xi)$.
\end{theorem}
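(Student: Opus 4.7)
The approach is Fourier inversion combined with a contour-shift argument inside the strip of analyticity. Condition (ii) with $y=0$ gives $\widehat{f}\in L^1(\mathbb{R})$, so Fourier inversion yields the pointwise representation
\begin{equation}
    f(x) \;=\; (2\pi)^{-1/2}\int_{\mathbb{R}} e^{ix\xi}\widehat{f}(\xi)\,d\xi.
\end{equation}
For $x>0$ and any $0<b<a$, I would shift the contour from the real axis upward to $\mathbb{R}+ib$, on which $|e^{ix\xi}|=e^{-bx}$. Combined with the $L^1$ bound on $\widehat{f}(\cdot+ib)$ from (ii), this gives $|f(x)|\leq (2\pi)^{-1/2}e^{-bx}\|\widehat{f}(\cdot+ib)\|_{L^1}$. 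For $x<0$ one shifts symmetrically to $\mathbb{R}-ib$, and for $x=0$ the trivial bound $\|f\|_\infty\leq (2\pi)^{-1/2}\|\widehat{f}\|_{L^1}$ suffices. Taking $C_b := (2\pi)^{-1/2}\sup_{|\tau|\leq b}\|\widehat{f}(\cdot+i\tau)\|_{L^1}$ then yields the conclusion.

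The contour shift itself would be justified via Cauchy's theorem on the rectangle $R_N$ with corners $\pm N$ and $\pm N + ib$: analyticity from (i) gives $\oint_{\partial R_N} e^{ix\xi}\widehat{f}(\xi)\,d\xi = 0$. As $N\to\infty$, the two horizontal sides converge (by dominated convergence, using that $\widehat{f}(\cdot)$ and $\widehat{f}(\cdot+ib)$ lie in $L^1$ by (ii)) to the real-axis integral and to the integral over $\mathbb{R}+ib$, respectively. The whole argument thus reduces to showing that the two vertical sides vanish along some subsequence $N_k \to \infty$.

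The main obstacle is precisely this vertical-side estimate: condition (ii) gives integrated control on horizontal lines but no pointwise decay along vertical rays. I would resolve it by a Fubini argument. Define $g(\eta) := \int_0^b |\widehat{f}(\eta + i\tau)|\,d\tau$; by (ii) together with Fubini, $g \in L^1(\mathbb{R})$, hence there exist sequences $N_k \to \infty$ along which $g(\pm N_k) \to 0$. On the right vertical side we estimate
\begin{equation}
    \left|\int_0^b e^{ix(N_k + i\tau)}\widehat{f}(N_k + i\tau)\, i\, d\tau\right| \;\leq\; \int_0^b |\widehat{f}(N_k + i\tau)|\, d\tau \;=\; g(N_k),
\end{equation}
using $|e^{ix(N_k+i\tau)}| = e^{-x\tau}\leq 1$ for $x>0$, $\tau\in[0,b]$; the left vertical side is handled identically. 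Both therefore vanish along $N_k$, closing the contour argument and completing the proof. The case $x<0$ is symmetric, with the rectangle placed in the lower half-strip.
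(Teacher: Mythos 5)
Your proposal is correct and follows essentially the same route as the paper: Fourier inversion, a Cauchy rectangle contour shift to $\mathbb{R}\pm ib$ with the vertical sides killed along a subsequence via a Fubini/$L^1$ argument, and then the bound $\sup_{|y|\le b}\|\widehat{f}(\cdot+iy)\|_{L^1}$ from condition (ii) giving the factor $e^{-b|x|}$. In fact your writeup makes explicit the contour-shift justification that the paper only gestures at by citing ``the same procedure as in Theorem~\ref{L2}.''
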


\begin{lemma}
\label{expdecay}
Let $\psi_0 \in  X_b^{L+1}(\mathbb{R})$ that satisfies $D^{s}\widehat{\psi}_0(0)=0$ for $s=0,1,...,L+1$. Next, $\Psi^{\alpha - \beta}_k( \xi)$ ($0 \leq \beta \leq \alpha \leq L+1$) is defined by the following:
\begin{equation}
    \widehat{\Psi}^{\alpha - \beta}_k( \xi)
    :=
    \frac{(D^{\alpha - \beta} \widehat{\psi_0})(\xi/2^k)}{(\xi/2^k)^{L+1-(\alpha - \beta)}}
\end{equation}
satisfies the following inequality.
\begin{equation}
\label{boundexp}
    |\mathcal{F}^{-1}[\widehat{\Psi}_k^{\alpha - \beta}](x)| < C_{b'} 2^k e^{-b'2^k|x|}
\end{equation}
for all $0<b'<b$.
\end{lemma}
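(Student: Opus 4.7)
Write $\gamma := \alpha - \beta \in \{0,\ldots,L+1\}$ and set $\widehat{\Phi}^\gamma(\zeta) := (D^\gamma \widehat{\psi_0})(\zeta)/\zeta^{L+1-\gamma}$, so that $\widehat{\Psi}^{\alpha-\beta}_k(\xi) = \widehat{\Phi}^\gamma(\xi/2^k)$. By the scaling law of the Fourier transform,
\[
\mathcal{F}^{-1}[\widehat{\Psi}^{\alpha-\beta}_k](x) = 2^k\, \Phi^\gamma(2^k x), \qquad \Phi^\gamma := \mathcal{F}^{-1}[\widehat{\Phi}^\gamma],
\]
so the target estimate (\ref{boundexp}) reduces to the pointwise bound $|\Phi^\gamma(y)| \leq C_{b'} e^{-b'|y|}$ for each $0<b'<b$. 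I plan to obtain this via Theorem \ref{analy}, whose hypotheses for $\widehat{\Phi}^\gamma$ are (i) an analytic extension to the strip $|\Im\zeta|<b$, and (ii) the uniform $L^1$-strip bound $\sup_{|\eta|\leq b''}\|\widehat{\Phi}^\gamma(\cdot+i\eta)\|_{L^1}<\infty$ for every $0<b''<b$.

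For (i), $\psi_0 \in X_b^{L+1}(\mathbb{R})$ gives $e^{b|x|} x^\gamma \psi_0 \in L^2(\mathbb{R})$ whenever $\gamma \leq L+1$. Theorem \ref{L2}, applied to $x^\gamma \psi_0$, analytically extends $\mathcal{F}[x^\gamma \psi_0] = (-i)^\gamma D^\gamma \widehat{\psi_0}$ to the strip $|\Im\zeta|<b$ with uniform $L^2$ control along each horizontal line $|\Im\zeta|\leq b''$. The vanishing of $D^s\widehat{\psi_0}(0)$ for $s=0,\ldots,L+1$ forces $\widehat{\psi_0}$ to have a zero of order at least $L+2$ at the origin, so $D^\gamma \widehat{\psi_0}$ vanishes to order at least $L+2-\gamma$ there; consequently the quotient $\widehat{\Phi}^\gamma$ has a removable singularity at $\zeta=0$ and is analytic throughout the strip.

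The main obstacle is (ii). I would split the integral as $\int_{|\xi|\leq 1} + \int_{|\xi|>1}$. The part over $|\xi|\leq 1$ is harmless since $\widehat{\Phi}^\gamma$ is continuous (in fact analytic) on the compact set $\{|\xi|\leq 1,\,|\eta|\leq b''\}$. For the tail $|\xi|>1$ I use $|\xi+i\eta|\geq |\xi|$ together with the Hölder inequality against the weight $(1+|\xi|)$:
\[
\int_{|\xi|>1}|\widehat{\Phi}^\gamma(\xi+i\eta)|\,d\xi \leq \bigl\|(1+|\xi|)\,D^\gamma\widehat{\psi_0}(\cdot+i\eta)\bigr\|_{L^2}\cdot \left\|\frac{1}{(1+|\xi|)\,|\xi|^{L+1-\gamma}}\right\|_{L^2(|\xi|>1)}.
\]
The second factor is a finite numerical constant. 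The first factor is controlled uniformly in $|\eta|\leq b''$ by reapplying Theorem \ref{L2} to $x^\gamma \psi_0$ and $x^{\gamma+1}\psi_0$, both of which satisfy the required weighted $L^2$-condition by the definition of $X_b^{L+1}$ in the admissible range of $\gamma$; the borderline case where a power of $x$ would exceed $L+1$ is absorbed by replacing the weight $(1+|\xi|)$ with $(1+\zeta^2)$ and invoking the identity $(1+\zeta^2)D^\gamma\widehat{\psi_0}(\zeta) = (-i)^\gamma\mathcal{F}\bigl[(1-\partial_x^2)(x^\gamma\psi_0)\bigr](\zeta)$, which keeps all derivatives within the $X_b^{L+1}$ budget.

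Once (i) and (ii) are established, Theorem \ref{analy} yields the pointwise decay $|\Phi^\gamma(y)| \leq C_{b'} e^{-b'|y|}$, and the scaling identity from the first paragraph converts this directly into the estimate (\ref{boundexp}).
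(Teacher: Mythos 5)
Your overall route is the same as the paper's: reduce to the $k=0$ profile by the scaling law, then verify hypotheses (i) and (ii) of Theorem \ref{analy} — analyticity on the strip via Theorem \ref{L2} plus the moment conditions (removable singularity at $\zeta=0$), and a uniform $L^1$ bound on horizontal lines obtained by splitting near and away from the origin and using Cauchy--Schwarz against a polynomial weight. Your treatment of the near-origin piece by compactness is a harmless simplification of the paper's Taylor-series argument, and the scaling step and the removability argument are correct.

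There is, however, one slip in the tail estimate. Controlling $\sup_{|\eta|\le b''}\|(1+|\xi|)\,D^{\gamma}\widehat{\psi_0}(\cdot+i\eta)\|_{L^2}$ by ``reapplying Theorem \ref{L2} to $x^{\gamma}\psi_0$ and $x^{\gamma+1}\psi_0$'' does not work for any $\gamma$, not only in a borderline case: Theorem \ref{L2} applied to $x^{\gamma+1}\psi_0$ controls $\mathcal{F}[x^{\gamma+1}\psi_0](\cdot+i\eta)=\pm i\,D^{\gamma+1}\widehat{\psi_0}(\cdot+i\eta)$, i.e.\ one more \emph{derivative} of $\widehat{\psi_0}$, whereas the factor $\xi$ multiplying $D^{\gamma}\widehat{\psi_0}$ corresponds on the physical side to a derivative of $x^{\gamma}\psi_0$, not to an extra power of $x$. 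The identity you then offer, $(1+\zeta^{2})D^{\gamma}\widehat{\psi_0}(\zeta)=(-i)^{\gamma}\mathcal{F}\bigl[(1-\partial_x^{2})(x^{\gamma}\psi_0)\bigr](\zeta)$, is the correct mechanism and must be used throughout, not only as a fallback; with it the argument closes, at the price of two derivatives of $\psi_0$ (so it stays inside the $X_b^{L+1}$ budget only when $L\ge 1$). The paper avoids even this by taking the weight $(1+|x+iy|)$ and passing by Plancherel on the shifted line to $\bigl\|e^{yz}(1+D)\bigl(z^{\gamma}\psi_0\bigr)\bigr\|_{L^2}$, which needs only one derivative and one power of $z$, both admissible for every $0\le\gamma\le L+1$; adopting that version removes the restriction and the case distinction entirely.
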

\begin{proof}
Proving $\widehat{\Psi}_0^{\alpha - \beta}$ satisfies conditions (i) and (ii) in Theorem \ref{analy} is sufficient for the theorem:

(i) We observe the analyticity of $\widehat{\Psi}_0^{\alpha - \beta}( \xi)$ around $\xi = 0$ and on $\xi \neq 0$. By Theorem \ref{L2} and $\psi_0 \in X_b^{L+1}(\mathbb{R})$, $D^{\alpha - \beta} \widehat{\psi_0} (\xi)$ is analytic on strip domain $|\Im \xi| < b$, thus has Taylor expansion on $\xi=0$ with radius of convergence $b$. By the moment condition of $D^{\alpha - \beta} \widehat{\psi_0}$ ($D^{s}\widehat{\psi}_0(0)=0$ for $s=0,1,...,L+1$), $\widehat{\Psi}_0^{\alpha - \beta}( \xi)$ also has Taylor expansion around $\xi = 0$ with radius of convergence $b$ i.e.
\begin{eqnarray}
\label{series_}
    \widehat{\Psi}_0^{\alpha - \beta}( \xi)
    =
    \frac{1}{\xi^{L+1-(\alpha - \beta)}}
    \sum_{m=0}^{\infty}
    \frac{D^m (D^{\alpha - \beta} \widehat{\psi_0})(0)}{m!}\xi^{m}
    = \sum_{m=0}^{\infty} \frac{D^{m+L+1-(\alpha-\beta) }(D^{\alpha - \beta}\widehat{\psi_0})(0)}{(m+L+1-(\alpha-\beta))!} \xi^{m}
\end{eqnarray}
 When $\xi \neq 0$, $\widehat{\Psi}_0^{\alpha - \beta}( \xi)$ is obviously analytic $|$Im$\xi| < b$ because $\xi^{-L-1+(\alpha - \beta)}$ is analytic on $\xi \neq 0$ and $D^{\alpha - \beta} \widehat{\psi_k}(\xi)$ is analytic on $|$Im$ \xi| < b$ by Theorem \ref{L2}.

(ii) Let $x,y$ be a real number. If $|y| < b$, $\widehat{\Psi_0}^{\alpha - \beta}(x+iy)$ has Taylor expansion around $x+iy =0$ when $|x| < \sqrt{b^2-y^2}$. Let $\tau$ be a real number such that $0<\tau< \sqrt{b^2-y^2}$.
\begin{eqnarray*}
&&\left\| \widehat{\Psi}_0^{\alpha - \beta}(x + iy) \right\|_1 \nonumber \\
&=&
\int_{|x| < \tau} \left| \frac{(D^{\alpha - \beta} \widehat{\psi_0})(x + i y)}{(x + i y)^{L+1-(\alpha - \beta)}} \right| dx
+
\int_{|x| \geq \tau} \left| \frac{(D^{\alpha - \beta} \widehat{\psi_0})(x + i y)}{(x + i y)^{L+1-(\alpha - \beta)}} \right| dx \nonumber \\
&\leq&
2 \tau
\sum_{m=0}^{\infty}  \frac{|D^{m+L+1-(\alpha-\beta) }(D^{\alpha - \beta}\widehat{\psi_0})(0)|}{(m+L+1-(\alpha - \beta))!} |\tau + i y|^m \nonumber \\
&& \qquad \qquad \qquad \qquad
+
|\tau + i y|^{-L-1+(\alpha - \beta)}
\int_{|x| \geq \tau}
\left| (D^{\alpha - \beta} \widehat{\psi_0})(x + i y)
\right| dx \nonumber \\
\end{eqnarray*}
In the last equality, the Taylor expansion of $\widehat{\Psi_0}^{\alpha - \beta}( \xi)$ is used in the first term. The first term is bounded because the Taylor expansion of $\widehat{\psi_0}(\xi)$ converges absolutely when $|\tau + iy| < b$. The second term is also bounded because 
\begin{eqnarray}
    \int_{|x| \geq \tau} \left| (D^{\alpha - \beta} \widehat{\psi_0})(x + i y)\right|dx
    &\leq&
    \int_{\mathbb{R}}
    \left| (D^{\alpha - \beta} \widehat{\psi_0})(x + i y)\right|dx \nonumber \\
    &\leq&
    \left(
    \int_{\mathbb{R}}
    (1+|x+iy|)^{-2}dx 
    \right)^{1/2}\nonumber \\
    && \qquad \qquad \quad
    \times \left(
    \int_{\mathbb{R}}
    \left|(1+|x+iy|)
     (D^{\alpha - \beta} \widehat{\psi_0})(x + i y)\right|^2dx \right)^{1/2} \nonumber \\
     &=&
     \left(
    \int_{\mathbb{R}}
    \left|(1+|x+iy|)\right|^{-2}dx 
    \right)^{1/2}\nonumber \\
    && \qquad \qquad \quad
    \times \left(
    \int_{\mathbb{R}}
    \left|
    e^{yz}
    (1+D)
     z^{\alpha - \beta} \psi_0(z)\right|^2dz \right)^{1/2}
     \nonumber \\
     &<& \infty
\end{eqnarray}
In the last inequality, the second term is bounded because $|y|<b$. Thus, we have the following expression:
\begin{equation}
    \sup_{|y| < b'}
    \left\| \widehat{\Psi_0}^{\alpha - \beta}(\cdot + iy) \right\|_1 < \infty
\end{equation}
for all $0<b'<b$.
From (i), (ii) and Theorem \ref{analy}, the desired inequality holds.
\end{proof}

\begin{lemma}
\label{tech}
Let $L$ be a positive number and $\psi_0(x) \in X_{b'}^{L+1}(\mathbb{R})$ be a function satisfying the moment condition 
\begin{equation}
    D^{\alpha} \widehat{\psi_0} (0) = 0
    \qquad \text{for all} \quad \alpha = 0,1,\cdots,L+1
\end{equation}
Let $\zeta_0(x)\in X_{b'}^{s}(\mathbb{R})$ be a function satisfying the same moment condition above, where $\psi_0$ is replaced by $\zeta_0$.
$\{\zeta_j\}_{j\in \mathbb{N}_0}$ and $\{ \psi_j \}_{j\in \mathbb{N}_0}$ are the sequences of functions defined by $\zeta_j(x)=2^{j} \zeta_0(2^{j}x)$, $\psi_j(x)=2^{j} \psi_0(2^{j}x)$. For all $b \in \mathbb{R}$ s.t. $0< b < b'$, the following inequalities hold.
\begin{eqnarray}
 \sup_{z \in \mathbb{R}} e^{2^{k} b |z|} |\zeta_{k}*\psi_{j}(z)| (1+|2^{k}z|)^{L+1} \lesssim 2^{-(j-k)L} 2^{k} \qquad if \quad k \leq j \\
 \sup_{z \in \mathbb{R}}
 e^{2^{j} b |z|}| \zeta_{k}*\psi_{j}(z)|(1+|2^{j}z|)^{L+1} \lesssim 2^{-(k-j)L} 2^{j} \qquad if \quad k > j
\end{eqnarray}
\end{lemma}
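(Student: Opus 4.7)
The plan is to exploit the vanishing-moment condition on the factor at the finer scale via Lemma \ref{expdecay} in order to extract a clean power of the scale ratio on the Fourier side, and then to bound the residual convolution using pointwise exponential decay supplied by Lemma \ref{bounded}. The two inequalities are symmetric under the swap $(\zeta_0,k)\leftrightarrow(\psi_0,j)$, so I describe only the case $k\leq j$; the case $k>j$ is identical with the roles interchanged, now using the moment condition on $\zeta_0$ to extract $(\xi/2^k)^{L+1}$ instead.

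Fix an auxiliary parameter $b''$ with $b<b''<b'$. Starting from the Fourier identity
\begin{equation*}
\widehat{\zeta_k*\psi_j}(\xi) = (2\pi)^{1/2}\widehat{\zeta_0}(\xi/2^k)\widehat{\psi_0}(\xi/2^j),
\end{equation*}
the moment conditions on $\psi_0$ allow the factorisation
\begin{equation*}
\widehat{\psi_0}(\xi/2^j) = (\xi/2^j)^{L+1}\widehat{\Psi^0_j}(\xi),\qquad \widehat{\Psi^0_j}(\xi):=\frac{\widehat{\psi_0}(\xi/2^j)}{(\xi/2^j)^{L+1}},
\end{equation*}
where $\widehat{\Psi^0_j}$ is precisely the symbol appearing in Lemma \ref{expdecay} (at scale $2^j$ with index $\alpha-\beta=0$). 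Writing $(\xi/2^j)^{L+1}=2^{-(j-k)(L+1)}(\xi/2^k)^{L+1}$ and absorbing the second factor into $\widehat{\zeta_0}(\xi/2^k)$ produces the Fourier transform of $\tilde\zeta_k(x):=2^k\tilde\zeta_0(2^k x)$ with $\tilde\zeta_0:=(-i)^{L+1}D^{L+1}\zeta_0$, and inverting the Fourier transform yields the key factorisation
\begin{equation*}
\zeta_k*\psi_j = 2^{-(j-k)(L+1)}\,\tilde\zeta_k * \Psi^0_j.
\end{equation*}

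I then estimate the two factors pointwise. Lemma \ref{expdecay} gives $|\Psi^0_j(y)|\lesssim 2^j e^{-b''2^j|y|}$. Applying the same analytic-continuation argument (Theorem \ref{analy}) to $\widehat{\tilde\zeta_k}(\xi)=(\xi/2^k)^{L+1}\widehat{\zeta_0}(\xi/2^k)$, with the implicit smoothness index $s$ in $\zeta_0\in X_{b'}^{s}$ taken large enough to ensure $\sup_{|y|<b''}\|\widehat{\tilde\zeta_k}(\cdot+iy)\|_{L^1}<\infty$ uniformly in $k$, yields $|\tilde\zeta_k(x)|\lesssim 2^k e^{-b''2^k|x|}$. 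Since $k\leq j$ gives $e^{-b''2^j|y|}\leq e^{-b''2^k|y|}$, and the elementary identity $\int e^{-a|z-y|}e^{-a|y|}\,dy\lesssim a^{-1}(1+a|z|)e^{-a|z|}$ with $a=b''2^k$ applies,
\begin{equation*}
|\tilde\zeta_k*\Psi^0_j(z)|\lesssim 2^k\cdot 2^j\cdot 2^{-k}(1+|2^k z|)e^{-b''2^k|z|}=2^j(1+|2^k z|)e^{-b''2^k|z|}.
\end{equation*}
Combining with the factorisation identity and noting that $(1+|2^k z|)^{L+2}e^{-(b''-b)2^k|z|}$ is uniformly bounded in $z,k$ since $b''>b$,
\begin{equation*}
e^{b 2^k|z|}(1+|2^k z|)^{L+1}|\zeta_k*\psi_j(z)|\lesssim 2^{-(j-k)(L+1)}\cdot 2^j = 2^k\cdot 2^{-(j-k)L},
\end{equation*}
which is the desired bound.

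The main technical obstacle is the quantitative factorisation on the Fourier side: one must separate from $\widehat{\psi_0}(\cdot/2^j)$ a scale-$2^j$ exponentially-decaying kernel that still carries the full gain $(\xi/2^j)^{L+1}$, for which Lemma \ref{expdecay} is precisely designed. After this step, the argument reduces to a careful two-scale convolution estimate, whose bookkeeping converts the Fourier-side gain $2^{-(j-k)(L+1)}$ into the claimed spatial-side factor $2^k\cdot 2^{-(j-k)L}$ after accounting for the Peetre-type weight and the $2^j$ produced by $\Psi^0_j$.
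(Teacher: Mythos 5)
Your overall route is close in spirit to the paper's (both hinge on the Fourier-side factorisation of Lemma \ref{expdecay} and on contour shifting to produce the exponential weight), and the final bookkeeping is correct: the identity $\zeta_k*\psi_j=2^{-(j-k)(L+1)}\tilde\zeta_k*\Psi_j^0$, the two-exponential convolution estimate, and absorbing $(1+|2^kz|)^{L+2}$ into $e^{-(b''-b)2^k|z|}$ all work. The gap is the pointwise bound $|\tilde\zeta_k(x)|\lesssim 2^k e^{-b''2^k|x|}$, i.e. $|D^{L+1}\zeta_0(x)|\lesssim e^{-b''|x|}$. To get this from Theorem \ref{analy} you need $\sup_{|y|\le b''}\|(\cdot+iy)^{L+1}\widehat{\zeta_0}(\cdot+iy)\|_{L^1}<\infty$, and the only available route (Cauchy--Schwarz against $(1+|\xi|)^{-1}$, then Parseval on the shifted line) consumes weighted $L^2$ control of $D^{L+2}\zeta_0$; so you are implicitly assuming $\zeta_0\in X_{b'}^{L+2}$ (your ``$s$ taken large enough''), one order more than the $X_b^{\bar L+\bar 1}$ regularity that Lemma \ref{kernel} actually delivers when this lemma is invoked in Theorem \ref{equivalence}, so the statement you prove is too weak for its downstream use. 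The problem becomes unavoidable in the case $k>j$: there the role swap places $D^{L+1}$ on $\psi_0$, whose regularity is fixed by the hypothesis $\psi_0\in X_{b'}^{L+1}$; membership in $X_{b'}^{L+1}$ gives weighted $L^2$ bounds on $D^{L+1}\psi_0$ but no pointwise exponential bound (Lemma \ref{bounded} would need two additional orders, the Theorem \ref{L2}/Theorem \ref{analy} route one additional order), so that step fails under the stated hypotheses and cannot be rescued by enlarging a free parameter.

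The paper's proof is arranged precisely to avoid pointwise control of top-order derivatives: it bounds $e^{b|z|}(1+|z|)^{L+1}|\zeta*\psi_k(z)|$ by $\|(1+D)^{L+1}[\widehat{\zeta}(\cdot+ib)\,\widehat{\psi_k}(\cdot+ib)]\|_{L^1}$, expands by Leibniz, and pairs each term via Cauchy--Schwarz and Parseval, so only the weighted $L^2$ quantities defining $X_b^{L+1}$ (derivatives and monomial weights up to order $L+1$) are ever used, with Lemma \ref{expdecay} supplying the factor $2^{-k(L+1)}$. If you want to keep your cleaner single-factorisation structure, a simple repair is to extract only $(\xi/2^{j})^{L}$ (respectively $(\xi/2^{k})^{L}$) rather than the full power $L+1$: the claimed gain is only $2^{-|j-k|L}$, the coarse-scale factor then carries $D^{L}$, and its pointwise exponential decay does follow from $X_{b'}^{L+1}$ by the Theorem \ref{L2}/Theorem \ref{analy} argument, since one spare derivative remains to produce the $L^1$ bound on the shifted line.
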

\begin{proof}
Although the proof is given only when $k > j$, the proof for $k \leq j$ is follows a similar procedure.
\begin{eqnarray}
\label{scale}
&&\zeta_j*\psi_{k}(z) = \int_{\mathbb{R}^d} 2^{j+k} \zeta(2^j (z-x)) \psi(2^k x)dx \nonumber \\
&=& \int_{\mathbb{R}^d}  \zeta(2^{j} z - x) 2^{k}\psi(2^{k-j} x)dx
=
2^{j}\int_{\mathbb{R}^d} \zeta(2^{j} z - x) 2^{k-j}\psi(2^{k-j} x) dx \nonumber \\
&=& 2^{j} \zeta * \psi_{k-j}(2^j z)
\end{eqnarray}
Here, we consider the case only $\psi$ has subscript. The upper bound of $|2^{b|z|} |\zeta * \psi_k(z)| (1+|z|^{L+1})$ is estimated by the $L^1$ norm of its Fourier transform. Here, we estimate $|2^{b|z|}|\zeta * \psi_k(z)| (1+|z|)^{L+1}|$ when $z \leq 0$. When $z > 0$, the estimate is obtained in a similar manner.
 \begin{eqnarray}
\label{sup}
|e^{bz}|\zeta * \psi_k(z)| (1+|z|)^{L+1}|
&\leq& (2 \pi)^{-1/2}  \|\mathcal{F}[e^{b(\cdot)} \zeta * \psi_k(\cdot) (1+|\cdot|)^{L+1}] \|_{L^1} \nonumber \\
&\leq&
(2 \pi)^{1/2} {}\|(1+D)^{{L+1}}  \mathcal{F}[   e^{b(\cdot)} \zeta * \psi_k(\cdot)] \|_{L^1} \nonumber \\
&=& \| (1+D)^{L+1}[\widehat{\zeta}(\cdot + ib) \widehat{\psi_k}(\cdot + ib)] \|_{L^1} 
\end{eqnarray}
The Leibniz rule is used to expand $(1+D)^{L+1}[\widehat{\zeta}(\xi + ib) \widehat{\psi_k}(\xi + ib)]$:
\begin{eqnarray*}
\label{Leibniz}
    (1+D)^{L+1}[\widehat{\zeta}(\xi + ib) \widehat{\psi_k}(\xi + ib)] 
    &=&
    \sum_{\alpha = 0}^{L+1}
    {}_{L+1}C_{\alpha}
    \sum_{\beta=0}^{\alpha} {}_{\alpha}C_{\beta} D^{\beta} \widehat{\zeta}(\xi+ ib) D^{\alpha - \beta}\widehat{\psi_k}(\xi+ ib) \nonumber \\
    &=&
    \sum_{\alpha = 0}^{L+1}
    {}_{L+1}C_{\alpha}
    \sum_{\beta=0}^{\alpha} {}_{\alpha}C_{\beta} 
    2^{-(\alpha-\beta)k}
    D^{\beta} \widehat{\zeta}(\xi+ ib) (D^{\alpha - \beta}\widehat{\psi_0})(2^{-k} (\xi+ib))
\end{eqnarray*}
Let 
\begin{equation}
\label{expo}
    (D^{\alpha - \beta}\widehat{\psi_0})(2^{-k}\xi) =2^{-k(L+1)+k(\alpha - \beta)} \xi^{L+1-(\alpha - \beta)} \widehat{\Psi}^{\alpha - \beta}_k( \xi)
\end{equation}
where
\begin{equation}
\label{boundexp}
    |\mathcal{F}^{-1}[\widehat{\Psi}_0^{\alpha - \beta}](x)| < C_{b} e^{-b''|x|}
\end{equation}
by Lemma \ref{expdecay} for some $b''$ such that $b<b''<b'$. From (\ref{sup})
\begin{eqnarray}
\label{sup2}
&& |e^{bz}|\zeta * \psi_k(z)| (1+|z|)^{L+1}| \nonumber \\
&\lesssim&
 \sum_{\alpha = 0}^{L+1}{}_{L+1}
C_{\alpha}
\sum_{\beta=0}^{\alpha} {}_{\alpha}C_{\beta}
2^{-(\alpha-\beta)k}
\| D^{\beta} \widehat{\zeta}(\cdot+ ib) (D^{\alpha - \beta}\widehat{\psi_0})(2^{-k} (\cdot+ib))  \|_{L^1} 
\nonumber \\
&=&
 \sum_{\alpha = 0}^{L+1}{}_{L+1}
C_{\alpha}
\sum_{\beta=0}^{\alpha} {}_{\alpha}C_{\beta}
2^{-k(L+1)}
\| D^{\beta} \widehat{\zeta}(\cdot+ ib) (\cdot+ib)^{L+1-(\alpha - \beta)} \widehat{\Psi}^{\alpha - \beta}_k(\cdot+ib)  \|_{L^1} \nonumber \\
&\leq&
 \sum_{\alpha = 0}^{L+1}{}_{L+1}
C_{\alpha}
\sum_{\beta=0}^{\alpha} {}_{\alpha}C_{\beta}
2^{-k(L+1)}
\| D^{\beta} \widehat{\zeta}(\cdot+ ib) (\cdot+ib)^{L+1-(\alpha - \beta)}\|_{L^2} \|\widehat{\Psi}^{\alpha - \beta}_k(\cdot+ib)  \|_{L^2} \nonumber \\
&=&
 \sum_{\alpha = 0}^{L+1}{}_{L+1}
C_{\alpha}
\sum_{\beta=0}^{\alpha} {}_{\alpha}C_{\beta}
2^{-k(L+1)}
\| e^{b(\cdot)} D^{L+1-(\alpha - \beta)}((\cdot)^{\beta}\zeta)\|_{L^2} \|e^{b|\cdot|} \mathcal{F}^{-1}[\widehat{\Psi}^{\alpha - \beta}_k]\|_{L^2} \nonumber \\
&\leq&
2^{-kL}
 \sum_{\alpha = 0}^{L+1}{}_{L+1}
C_{\alpha}
\sum_{\beta=0}^{\alpha} {}_{\alpha}C_{\beta}
\| e^{b(\cdot)} D^{L+1-(\alpha - \beta)}((\cdot)^{\beta}\zeta)\|_{L^2} \|e^{b|\cdot|} \mathcal{F}^{-1}[\widehat{\Psi}^{\alpha - \beta}_0]\|_{L^2}
\end{eqnarray}
In the fourth equality, Parseval's identity is used. The last term in (\ref{sup2}) is bounded because of $\zeta \in X_b^{L+1}(\mathbb{R})$ and (\ref{boundexp}). In case $z > 0$, we obtain a similar inequality.
Thus, we obtain the following expression:
\begin{equation}
\label{result2}
     e^{b|z|}|\zeta * \psi_k(z)| (1+|z|)^{L+1}
     \lesssim
     2^{-kL}
\end{equation}
Here, $\lesssim$ reveals that the approximation is independent of $k$.

From (\ref{scale})
\begin{equation}
    e^{b2^{j}|z|}|\zeta_j * \psi_k(z)| (1+|2^j z|)^{L+1}
     \lesssim
     2^{-(k-j)L} 2^{j}
\end{equation}
for $j \leq k$. This is the desired inequality.
\end{proof}

\subsection{Technical lemmas for Theorem \ref{wav}}
\label{appendix_4}
$\quad \; \; \;$ In this appendix, we detail the supplemental results necessary to prove the Theorem \ref{wav}. Especially, functions constructed in Lemma \ref{delta} and the estimates in Lemma \ref{kappabound} are provided. Lemma \ref{delta} is from \cite{Schott98}.
\begin{lemma} \cite{Schott98}
\label{delta}
Let $L\in\mathbb{N}_0$. There exist functions $\Phi_L$, $\Psi_L \in C_0^{\infty}$ such that
\begin{equation}
\label{un}
    \int_{\mathbb{R}^d} \Phi_L(x) dx = 1
\end{equation}
and
\begin{equation}
\label{del}
    \Delta^{L} \Psi_L(x)
    =
    \Phi_L(x) - 2^{-d}\Phi_L(x/2)
\end{equation}
\end{lemma}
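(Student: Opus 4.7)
The plan is to construct $\Phi_L$ with enough vanishing moments so that the right-hand side $f := \Phi_L - 2^{-d}\Phi_L(\cdot/2)$ lies in the range of $\Delta^L$ acting on compactly supported smooth functions, then recover $\Psi_L$ by inverting $\Delta^L$ in the Fourier domain. The key obstruction is that, in dimension $d \geq 2$, the symbol $(-|\xi|^2)^L$ of $\Delta^L$ has a nontrivial complex zero variety $\{\xi \in \mathbb{C}^d : \xi_1^2 + \cdots + \xi_d^2 = 0\}$, so dividing $\widehat{f}$ by $|\xi|^{2L}$ only preserves the Paley--Wiener exponential-type property if $\widehat{f}$ vanishes to order $L$ on this entire variety. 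This stronger vanishing is naturally arranged by taking $\Phi_L$ to be radial with appropriate radial moment conditions.

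Concretely, I would first fix a nonnegative radial bump $\phi_0 \in C_0^\infty(\mathbb{R}^d)$ supported in the unit ball together with distinct positive scales $r_0, \ldots, r_{L-1}$, and set $\Phi_L(x) = \sum_{k=0}^{L-1} c_k \phi_0(x/r_k)$ where the coefficients $c_k$ solve the $L \times L$ linear system
\begin{equation*}
    \sum_{k=0}^{L-1} c_k \int_{\mathbb{R}^d} |x|^{2j} \phi_0(x/r_k)\, dx = \delta_{j,0}, \qquad j = 0, 1, \ldots, L-1.
\end{equation*}
After pulling out the nonzero radial moments of $\phi_0$, the coefficient matrix becomes a Vandermonde-type matrix in the variables $r_k^2$, hence invertible, and the resulting $\Phi_L$ is radial, lies in $C_0^\infty(\mathbb{R}^d)$, satisfies $\int \Phi_L = 1$, and has $\int |x|^{2j} \Phi_L(x)\, dx = 0$ for $1 \leq j \leq L-1$. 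By radiality, $\widehat{\Phi_L}(\xi) = h(|\xi|^2)$ for an entire function $h$ on $\mathbb{C}$, and the moment conditions translate to $h(0) = 1$ and $h^{(k)}(0) = 0$ for $1 \leq k \leq L-1$. Taylor-expanding both terms then gives
\begin{equation*}
    \widehat{f}(\xi) = h(|\xi|^2) - h(4|\xi|^2) = |\xi|^{2L}\, g(|\xi|^2)
\end{equation*}
for some entire function $g$. Setting $\widehat{\Psi_L}(\xi) := (-1)^L g(|\xi|^2)$ then formally yields $\Delta^L \Psi_L = f$, since $\Delta^L$ acts as multiplication by $(-|\xi|^2)^L$ on the Fourier side.

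The principal obstacle is verifying $\Psi_L \in C_0^\infty(\mathbb{R}^d)$. Rapid decay of $\widehat{\Psi_L}$ on the real axis follows from the Schwartz decay of $\widehat{f}$, since the quotient by $|\xi|^{2L}$ is dominated by $\widehat{f}$ at infinity and is smooth at the origin thanks to the vanishing moments. Compact support is obtained via the Paley--Wiener--Schwartz theorem: since $\Phi_L$ is supported in a ball $B(0,R)$, the function $\widehat{f}$ is entire of exponential type at most $2R$, and because it vanishes to order $L$ along the complex variety $\{|\xi|^2 = 0\}$ (by construction of $h$), the quotient $\widehat{f}(\xi)/|\xi|^{2L}$ remains entire of the same exponential type; applying Paley--Wiener--Schwartz to $\widehat{\Psi_L}$ delivers $\Psi_L \in C_0^\infty(\mathbb{R}^d)$, completing the construction.
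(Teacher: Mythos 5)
Your construction is a legitimate, essentially self-contained proof, and it is worth noting that the paper itself does not prove this lemma at all: it simply imports the statement from \cite{Schott98}. So there is no internal proof to compare against; judged on its own terms, your route (radial $\Phi_L$ with vanishing radial moments $\int |x|^{2j}\Phi_L\,dx=0$ for $1\le j\le L-1$, Vandermonde solvability of the coefficient system, the factorization $\widehat{\Phi_L}(\xi)-\widehat{\Phi_L}(2\xi)=h(|\xi|^2)-h(4|\xi|^2)=|\xi|^{2L}g(|\xi|^2)$ with $g$ entire in one variable, and $\widehat{\Psi_L}:=(-1)^Lg(|\xi|^2)$) is sound. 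The radiality is exactly the right device: since $\Delta^L\Psi_L=\Phi_L-2^{-d}\Phi_L(\cdot/2)$ forces all moments of the right-hand side up to order $2L-1$ to vanish, and for a radial function these reduce to the even radial moments, your moment conditions are both necessary and sufficient for the Fourier-side division to produce an entire function of $\zeta_1^2+\cdots+\zeta_d^2$, which neatly sidesteps the several-variable division problem you correctly identified as the main obstruction.

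The one place where your argument is thinner than it should be is the final compact-support step. ``Entire of exponential type plus rapid decay on the real axis'' is not literally the hypothesis of the Paley--Wiener--Schwartz theorem for $C_0^{\infty}$; you need the full bounds $|\widehat{\Psi_L}(\zeta)|\le C_N(1+|\zeta|)^{-N}e^{2R|\Im\zeta|}$ on all of $\mathbb{C}^d$, and the delicate region is where $|\zeta|\to\infty$ while $t=\zeta_1^2+\cdots+\zeta_d^2$ stays bounded (e.g.\ $\zeta=(is,s,0,\dots,0)$), since there you cannot bound the quotient by the numerator. The fix is short: for $|t|\ge 1$ one has $|g(t)|\le|h(t)-h(4t)|$ and inherits the bounds of $\widehat{f}$; for $|t|\le 1$, $g$ is bounded by $\max_{|t|\le 1}|g(t)|$, and on that region $|\zeta|^2=|\Re\zeta|^2+|\Im\zeta|^2\le 2|\Im\zeta|^2+1$, so $(1+|\zeta|)^{-N}e^{2R|\Im\zeta|}$ is bounded below by a positive constant and the required estimate still holds. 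With that paragraph added (and a trivial separate treatment of $L=0$, where your empty sum would give $\Phi_0=0$ but the lemma is immediate by taking $\Psi_0=\Phi_0-2^{-d}\Phi_0(\cdot/2)$), the proof is complete.
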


\begin{lemma}
\label{support}
Assume $\psi$ satisfies the vanishing moment, that is $D^{s}\widehat{\psi}(0) = 0$ for $s = 0,\cdots,L+1$. If supp$\: \psi \subset \{0 \leq x \leq R \}$,
the support of $\mathcal{F}^{-1}[\widehat{\psi}/(\cdot)^{L+1}])$ is also included in $\{0 \leq x \leq R \}$.
\end{lemma}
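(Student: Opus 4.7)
The plan is to show that dividing $\widehat{\psi}$ by $\xi^{L+1}$ in the Fourier domain corresponds, after $L+1$ successive antiderivatives in the spatial domain, to a function that inherits the support $[0,R]$ from $\psi$. The vanishing-moment hypotheses are exactly what is needed to close up the support after each antidifferentiation.

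Concretely, I would set $g_0 := \psi$ and, inductively for $k=0,1,\dots,L$, define
\[
g_{k+1}(x) := \int_{-\infty}^{x} g_k(t)\,dt.
\]
The key claim to establish by induction on $k$ is that $\operatorname{supp} g_k \subset [0,R]$ and $\widehat{g_k}(\xi) = \widehat{\psi}(\xi)/(i\xi)^k$ for $k=0,\dots,L+1$. For the support: since $\operatorname{supp} g_k \subset [0,R]$ by the inductive hypothesis, $g_{k+1}(x) = 0$ for $x<0$ automatically, and for $x>R$ we have $g_{k+1}(x) = \int_0^R g_k(t)\,dt$, which equals a constant multiple of $\widehat{g_k}(0)$. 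For the Fourier identity: the relation $\widehat{g_{k+1}'} = i\xi\,\widehat{g_{k+1}}$ combined with $g_{k+1}' = g_k$ gives $\widehat{g_{k+1}}(\xi) = \widehat{g_k}(\xi)/(i\xi) = \widehat{\psi}(\xi)/(i\xi)^{k+1}$, which makes sense as a tempered distribution provided $\widehat{\psi}(\xi)/\xi^{k+1}$ is locally bounded near the origin.

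This last requirement is exactly where the vanishing-moment hypothesis enters. Since $D^s\widehat{\psi}(0)=0$ for $s=0,1,\dots,L+1$, the Taylor expansion of $\widehat{\psi}$ at the origin starts at order $L+2$, so $\widehat{\psi}(\xi)/\xi^{k}$ is analytic at $\xi=0$ for every $k\le L+2$. In particular, for $k\le L+1$ the value $\widehat{g_k}(0) = \widehat{\psi}(\xi)/(i\xi)^k\big|_{\xi=0}$ is well defined, and the Taylor series shows it equals $0$ whenever $L+2-k\ge 1$, i.e.\ whenever $k\le L+1$. This is precisely the condition that kills $\int_0^R g_k(t)\,dt$ at each step $k=0,\dots,L$, and so the inductive step for the support goes through.

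Having established the claim through $k=L+1$, we obtain $g_{L+1}$ with $\operatorname{supp} g_{L+1}\subset [0,R]$ and $\widehat{g_{L+1}}(\xi) = \widehat{\psi}(\xi)/(i\xi)^{L+1}$. Therefore $\mathcal{F}^{-1}\bigl[\widehat{\psi}(\cdot)/(\cdot)^{L+1}\bigr] = i^{L+1} g_{L+1}$ has support in $[0,R]$, which is the desired conclusion. The main (mild) technical point is to justify the Fourier identities at the level of tempered distributions; this is routine once the vanishing moments guarantee that each $g_k$ is compactly supported and bounded, hence in $L^1\cap L^\infty$, so ordinary Fourier analysis applies throughout.
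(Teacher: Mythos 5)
Your proposal is correct, but it takes a genuinely different route from the paper. The paper argues in the complex domain, Paley--Wiener style: since $\psi$ is compactly supported in $[0,R]$, $\widehat{\psi}$ is entire and satisfies one-sided bounds of the form $|\widehat{\psi}(\xi)|\lesssim e^{R\,\Im\xi}(1+|\xi|^{L})^{-1}$ for $\Im\xi\ge 0$ (and without the exponential factor for $\Im\xi<0$); the vanishing moments make $\widehat{\psi}(\xi)/\xi^{L+1}$ entire as well, and shifting the contour of the inverse Fourier integral to $\Im\xi=\eta$ and letting $\eta\to+\infty$ (resp. $-\infty$) kills the integral for $x>R$ (resp. $x<0$). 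Your argument instead stays on the real line: iterated antiderivatives $g_{k+1}(x)=\int_{-\infty}^{x}g_k$, with the moment conditions guaranteeing $\widehat{g_k}(0)=0$ for $k\le L$ so that each primitive remains supported in $[0,R]$, and the identity $\widehat{g_{L+1}}=\widehat{\psi}/(i\xi)^{L+1}$ identifying $g_{L+1}$ with the function in question up to a harmless constant. Your route is more elementary (no contour shifting, only $L^1$/continuity and integration by parts), while the paper's route reuses the complex-analytic machinery it has already set up (Theorems \ref{L2} and \ref{analy}, Lemma \ref{expdecay}) and simultaneously yields the half-plane decay estimates it needs elsewhere. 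One point to tighten in your write-up: derive the Fourier identity at level $k+1$ only \emph{after} establishing $\operatorname{supp} g_{k+1}\subset[0,R]$ (so that $g_{k+1}\in L^1$ with compact support, $\widehat{g_{k+1}}$ is continuous, and integration by parts gives $\widehat{g_{k+1}}(\xi)=\widehat{g_k}(\xi)/(i\xi)$ for $\xi\neq 0$, extended to $\xi=0$ by continuity); if one instead divides the distributional relation $i\xi\,\widehat{g_{k+1}}=\widehat{g_k}$ by $i\xi$ directly, the quotient is only determined up to a multiple of $\delta_0$, which is exactly the ambiguity your ordering avoids.
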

\begin{proof}
Here, $\psi$, $D^{L}\psi\in L^1$ because $\psi \in C^{L}_0$ and supp$\: \psi \subset \{0 \leq x \leq R \}$. Thus,
\begin{equation}
    |\widehat{\psi}(\xi)| \leq |\int_{0}^R e^{-i \xi x} \psi(x) dx|
    \leq
    \left\{
    \begin{array}{ll}
    e^{R Im \xi} \|\psi\|_1 & Im \xi \geq 0 \\
    \|\psi\|_1 &  Im \xi < 0
    \end{array}
    \right.
\end{equation}
and
\begin{equation}
    |\xi^{L} \widehat{\psi}(\xi)| \leq |\int_{0}^R e^{-i \xi x} D^{L} \psi(x) dx|
    \leq
    \left\{
    \begin{array}{ll}
    e^{R Im \xi} \| D^{L}\psi\|_1 & Im \xi \geq 0 \\
    \| D^{L}\psi\|_1 &  Im \xi < 0
    \end{array}
    \right.
\end{equation}
Then,
\begin{equation}
\label{polydecay}
    |\widehat{\psi}(\xi)|
    \leq
    \left\{
    \begin{array}{ll}
    \frac{C e^{R Im \xi}}{(1+|\xi|^L)}  & Im \xi \geq 0 \\
     \frac{C}{(1+|\xi|^L)} &  Im \xi < 0
    \end{array}
    \right.
    \quad \text{for all} \quad \xi \in \mathbb{C}
\end{equation}
with some constant $C>0$. $\widehat{\psi}(\xi)$ is entirely (analytic on $\mathbb{C}$) because $\psi$ has a compact support. Next, $\widehat{\psi}(\xi)/\xi^{L+1}$ is also the similar because of the vanishing moment of $\widehat{\psi}$ (similar to the proof of Lemma \ref{expdecay}). Thus, based on Cauchy's integral theorem, we have the following expression:
\begin{eqnarray}
    \mathcal{F}^{-1}[\widehat{\psi}(\cdot)/(\cdot)^{L+1}](x)
    &=&
    \int_{\mathbb{R}} e^{i \xi x} \frac{\widehat{\psi}(\xi)}{\xi^{L+1}}  d \xi \nonumber \\
    &=&
    \int_{\mathbb{R}} e^{i (\xi+i \eta) x} \frac{\widehat{\psi}(\xi+i \eta)}{(\xi+i \eta)^{L+1}}  d \xi 
\end{eqnarray}
for all $\eta \in \mathbb{R}$

If $x \geq 0$,  (\ref{polydecay}) is considered and $\eta > 0$ is assumed
\begin{eqnarray}
    |\mathcal{F}^{-1}[\widehat{\psi}(\cdot)/(\cdot)^{L+1}](x)|
    &\leq&
    \int_{\mathbb{R}} e^{- \eta x} \frac{|\widehat{\psi}(\xi+i \eta)|}{|\xi+i \eta|^{L+1}}
     d \xi 
    \nonumber \\
    &\leq&
    e^{\eta R- \eta x}
    \int_{\mathbb{R}}
    \frac{C}{(1+|\xi+i \eta|^{L})|\xi+i \eta|^{L+1}}
     d \xi 
\end{eqnarray}
To take the limit $\eta \rightarrow \infty$, let $\eta_0$ be a positive real number s.t. $0< \eta_0 < \eta$ when $\eta$ is sufficiently large. Next, if $x > R$, we have the following:
\begin{eqnarray}
    |\mathcal{F}^{-1}[\widehat{\psi}(\cdot)/(\cdot)^{L+1}](x)|
    &\leq&
    e^{\eta R- \eta x}
    \int_{\mathbb{R}}
    \frac{C}{(1+|\xi+i \eta_0|^L)|\xi+i \eta_0|^{L+1}}
     d \xi \nonumber \\
     &\xrightarrow{\eta \rightarrow \infty} 0&
\end{eqnarray}
We conclude that 
\begin{equation}
\label{x>0}
    \mathcal{F}^{-1}[\widehat{\psi}(\cdot)/(\cdot)^{L+1}](x) = 0
    \quad \text{when} \quad x > R
\end{equation}

If $x<0$, consider (\ref{polydecay}) and assume $\eta < 0$. A similar calculation gives the following result:
\begin{eqnarray}
     |\mathcal{F}^{-1}[\widehat{\psi}(\cdot)/(\cdot)^{L+1}](x)|
    &\leq&
    e^{- \eta x}
    \int_{\mathbb{R}}
    \frac{C}{(1+|\xi+i \eta_0|^L)|\xi+i \eta_0|^{L+1}}
     d \xi \nonumber \\
     &\xrightarrow{\eta \rightarrow -\infty} 0&
\end{eqnarray}
with some $\eta_0 < 0$. Thus, we have the following expression:
\begin{equation}
\label{x<0}
    \mathcal{F}^{-1}[\widehat{\psi}(\cdot)/(\cdot)^{L+1}](x) = 0
    \quad \text{for} \: \text{all} \quad x < 0
\end{equation}
From (\ref{x>0}) and (\ref{x<0}), we obtain supp$\: \mathcal{F}^{-1}[\widehat{\psi}(\cdot)/(\cdot)^{L+1}] \subset [0,R]$.
\end{proof}

\begin{lemma}
\label{support2}
Assume $\psi$ satisfies the vanishing moment, that is $D^{s}\widehat{\psi}(0) = 0$ for $s = 0,\cdots,L+1$. If supp$\: \psi \subset \{|x| \leq R \}$,
the support of $\mathcal{F}^{-1}[\widehat{\psi}/(\cdot)^{L+1}])$ is also included in $\{|x| \leq R \}$.
\end{lemma}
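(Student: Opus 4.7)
The plan is to mimic the argument of Lemma \ref{support} essentially verbatim, with only minor adjustments so that the Paley--Wiener-type bounds become symmetric in the sign of $\Im \xi$. Since the hypothesis is now $\operatorname{supp}\psi\subset\{|x|\le R\}$, the estimate $|e^{-i\xi x}|=e^{x\Im\xi}$ has maximum $e^{R|\Im\xi|}$ over $x\in[-R,R]$ (attained at $x=R$ when $\Im\xi\ge 0$ and at $x=-R$ when $\Im\xi<0$). This yields, for every $\xi\in\mathbb{C}$,
\begin{equation*}
    |\widehat{\psi}(\xi)|\le e^{R|\Im\xi|}\|\psi\|_{1},
    \qquad
    |\xi^{L}\widehat{\psi}(\xi)|\le e^{R|\Im\xi|}\|D^{L}\psi\|_{1},
\end{equation*}
so that $|\widehat{\psi}(\xi)|\lesssim e^{R|\Im\xi|}/(1+|\xi|^{L})$. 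As in Lemma \ref{support}, the vanishing moment condition together with the fact that $\widehat{\psi}$ is entire guarantees that $\widehat{\psi}(\xi)/\xi^{L+1}$ is entire, so Cauchy's integral theorem allows shifting the contour of integration freely in the complex plane.

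To handle $x>R$, I would shift to the line $\Im\xi=\eta>0$, obtaining
\begin{equation*}
    |\mathcal{F}^{-1}[\widehat{\psi}/(\cdot)^{L+1}](x)|
    \le e^{\eta R-\eta x}\int_{\mathbb{R}}\frac{C\,d\xi}{(1+|\xi+i\eta|^{L})|\xi+i\eta|^{L+1}}.
\end{equation*}
Bounding the integrand uniformly by its value at a fixed reference height $\eta_0>0$ (for $\eta>\eta_0$) makes the integral finite and independent of $\eta$, so letting $\eta\to\infty$ kills the prefactor $e^{-\eta(x-R)}$ and forces $\mathcal{F}^{-1}[\widehat{\psi}/(\cdot)^{L+1}](x)=0$.

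For $x<-R$, the symmetric bound is what does the new work: I would shift to $\Im\xi=\eta<0$ and pick up
\begin{equation*}
    |\mathcal{F}^{-1}[\widehat{\psi}/(\cdot)^{L+1}](x)|
    \le e^{-\eta R-\eta x}\int_{\mathbb{R}}\frac{C\,d\xi}{(1+|\xi+i\eta|^{L})|\xi+i\eta|^{L+1}},
\end{equation*}
and since $-\eta R-\eta x=-\eta(R+x)$ with $R+x<0$ and $-\eta>0$, the exponent tends to $-\infty$ as $\eta\to-\infty$. Dominating the integrand by its value at some fixed $\eta_0<0$ again makes the integral uniformly bounded, and taking $\eta\to-\infty$ yields $\mathcal{F}^{-1}[\widehat{\psi}/(\cdot)^{L+1}](x)=0$ for $x<-R$. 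Combining the two cases gives $\operatorname{supp}\mathcal{F}^{-1}[\widehat{\psi}/(\cdot)^{L+1}]\subset\{|x|\le R\}$.

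There is no real obstacle here; the only delicate point is the symmetric Paley--Wiener bound, and the one genuine difference from Lemma \ref{support} is the availability of a useful exponential decay for $\Im\xi<0$ (which Lemma \ref{support} could afford to discard because its support lay in $\{x\ge 0\}$). Everything else---the use of the vanishing moments to cancel the singularity of $1/\xi^{L+1}$ at the origin, the entirety of $\widehat{\psi}/\xi^{L+1}$, and the dominated-integral argument for interchanging limit and integral---is identical to the previous lemma.
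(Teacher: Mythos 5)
Your proof is correct: the symmetric Paley--Wiener bound $|\widehat{\psi}(\xi)|\lesssim e^{R|\Im\xi|}/(1+|\xi|^{L})$, the contour shifts to $\Im\xi=\eta\to+\infty$ for $x>R$ and $\eta\to-\infty$ for $x<-R$, and the uniform domination of the integrand by its value at a fixed height $\eta_0$ all go through as you describe. The paper in fact states Lemma \ref{support2} without proof, leaving it as the evident two-sided adaptation of Lemma \ref{support}, which is exactly the argument you give.
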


\begin{lemma}
\label{kappabound}
Assume $\psi_{k,m}$ satisfies the assumption \ref{wav_assumption}. $\psi$ and vanishing moment, that is $D^{s}\widehat{\psi}(0) = 0$ for $s = 0,\cdots, L+1$ 
Let $\kappa_j$ be a $C^{\infty}(\mathbb{R})$ function defined by the following expression:
\begin{equation}
    \kappa_0^L(x) = \Phi_L(x)
    \qquad
    \kappa_j^L(x) = 2^j \Delta^{L} \Psi_L(2^j x) \quad j\geq 1
\end{equation}
where $\Phi_L$ and $\Delta^{L} \Phi_L$ are functions in Lemma \ref{delta}. Assume support of $\Phi_L$ and $\Delta^{L} \Psi_L(2^j x)$ are included in cube $c_{L} Q_0$ with some positive real number $c_L >0$ and $|Q_0|=1$. Next, we have the following expression:
\begin{eqnarray}
|\kappa^{L+1}_{j}*\psi_{k,m}(x)| \lesssim
\left\{
\begin{array}{ll}
    2^{-(L+1)(j-k)}
    2^j 2^{k/2}
    \int_{c_{L} Q_{j}} \chi_{k,m}((x-y)/\gamma) dy &  j \geq k \\
    2^{-(L+1)(k-j)}
    2^j 2^{k/2}
    \int_{c_{L} Q_j} 
    \chi_{k,m}((x-y)/\gamma) dy &  j < k
\end{array}
\right.
\end{eqnarray}
where $Q_j=2^{-j} Q_0$ and $\gamma$ is a positive constant that appeared in Assumption.\ref{wav_assumption}.
\end{lemma}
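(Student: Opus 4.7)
The plan is to split the argument into the two cases $j\geq k$ and $j<k$ and in each case perform an integration by parts that transfers $L+1$ derivatives from the finer-scale function onto the coarser one. The inputs that make this work are the Laplacian decomposition of Lemma \ref{delta} on the $\kappa$-side and the vanishing-moment-via-compact-support factorization of Lemma \ref{support2} on the $\psi$-side.

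For the case $j\geq k$, I would exploit that $\kappa^{L+1}_j(y)=2^{j}\Delta^{L+1}\Psi_{L+1}(2^{j}y)$ is a high-order derivative of a dilated, compactly supported function. Since the lemma is applied coordinatewise in the proof of Theorem \ref{wav} (so $\Delta^{L+1}=\partial^{2(L+1)}$), a direct chain-rule computation shows $\kappa^{L+1}_j=\partial^{L+1}\Omega_j$ with $\Omega_j(y):=2^{-jL}\Psi_{L+1}^{(L+1)}(2^{j}y)$, $\|\Omega_j\|_{\infty}\lesssim 2^{-jL}$, and $\operatorname{supp}\Omega_j\subset c_{L}Q_{j}$. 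Integrating by parts $L+1$ times and invoking the wavelet bound $|\partial^{L+1}\psi_{k,m}(z)|\lesssim 2^{(L+1)k+k/2}\chi_{\gamma Q_{k,m}}(z)$ from Assumption \ref{wav_assumption} should give
\begin{equation*}
|\kappa^{L+1}_j * \psi_{k,m}(x)|\;\lesssim\; 2^{-jL+(L+1)k+k/2}\int_{c_{L}Q_{j}}\chi_{k,m}((x-y)/\gamma)\,dy,
\end{equation*}
whose exponent equals $-(L+1)(j-k)+j+k/2$, as claimed; the degenerate subcase $j=0$ forces $k=0$ and is handled by a direct $L^{\infty}$ bound on $\Phi_{L+1}*\psi_{0,m}$.

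For the case $j<k$ the roles reverse. Since $\psi$ has $L+1$ vanishing moments, Lemma \ref{support2} supplies a compactly supported $H$ (with $\operatorname{supp}H\subset\operatorname{supp}\psi$) satisfying $\partial^{L+1}H=c\,\psi$ for a unit constant $c$. Rescaling yields $\psi_{k,m}=c\,2^{-k(L+1/2)}\partial^{L+1}\tilde{H}_{k,m}$ with $\tilde{H}_{k,m}(x):=H(2^{k}x-m)$ supported in $\gamma Q_{k,m}$ (after possibly enlarging the constant $\gamma$). Integrating by parts the other way gives
\begin{equation*}
\kappa^{L+1}_j * \psi_{k,m}(x)\;=\;(-1)^{L+1}c\,2^{-k(L+1/2)}\int \partial^{L+1}\kappa^{L+1}_j(y)\,\tilde{H}_{k,m}(x-y)\,dy,
\end{equation*}
and the chain rule delivers $|\partial^{L+1}\kappa^{L+1}_j|\lesssim 2^{j(L+2)}$ on $c_{L}Q_{j}$ (the formula is consistent for $j=0$, where $\kappa^{L+1}_0=\Phi_{L+1}$). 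Combined with $\|\tilde{H}_{k,m}\|_{\infty}=\|H\|_{\infty}$, the resulting exponent $-k(L+1/2)+j(L+2)$ simplifies to $-(L+1)(k-j)+j+k/2$, matching the target.

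The principal obstacle I anticipate is invoking the factorization $\psi=c\,\partial^{L+1}H$ with $H$ of the same compact support; while this is provided by Lemma \ref{support2}, care is required to confirm that compactness of support is preserved under the dilation and translation defining $\tilde{H}_{k,m}$ and that a single constant $\gamma$ can be taken to accommodate both $\operatorname{supp}\psi_{k,m}$ and $\operatorname{supp}\tilde{H}_{k,m}$. Once that identity and the parallel decomposition $\kappa^{L+1}_j=\partial^{L+1}\Omega_j$ are in place, the remainder---propagating the dilation factors $2^{j}$ and $2^{k}$ through the chain rule and absorbing harmless support constants (for instance $c_{L+1}$ versus $c_{L}$) into the $\lesssim$---is routine.
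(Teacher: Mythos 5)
Your proposal is correct and follows essentially the same route as the paper: in both cases you transfer exactly $L+1$ derivatives between the two factors, using the wavelet bound on $\partial^{L+1}\psi_{k,m}$ when $j\geq k$ and the compactly supported antiderivative of $\psi$ (Lemma \ref{support}/\ref{support2}) when $j<k$, with the same exponent bookkeeping. The only difference is presentational --- you perform the transfer by integration by parts in physical space, whereas the paper factors $(\xi/2^j)^{2(L+1)}$ on the Fourier side --- and you additionally make explicit the degenerate $j=0$ case, which the paper glosses over.
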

\begin{proof}
(i) If $j \geq k$,
\begin{eqnarray}
    \mathcal{F}[\kappa^{L+1}_{j}*\psi_{k,m}](\xi)
    &=&
    (\xi/2^j)^{2(L+1)} \widehat{\Psi}_{L+1}(\xi/2^j) 2^{-k/2} \widehat{\psi}(\xi/2^k) e^{-i m/2^k} \nonumber \\
    &=&
    2^{-(L+1)(j-k)} (\xi/2^j)^{L+1} \widehat{\Psi}_{L+1}(\xi/2^j) 2^{-k/2} (\xi/2^k)^{L+1} \widehat{\psi}(\xi/2^k) e^{-i m/2^k}
\end{eqnarray}
Thus, we obtain the following expression:
\begin{equation}
    \kappa^{L+1}_{j}*\psi_{k,m}(x)
    =
    2^{-(L+1)(j-k)} 2^j 2^{-k/2} (\partial^{L+1}\Psi_{L+1})(2^j \cdot)*(\partial^{L+1}\psi)_{k,m}(x)
\end{equation}
where $(\partial^{L+1}\psi)_{k,m}(x)=2^k \partial^{L+1}\psi(2^k x - m)$. Therefore, the estimate of $|\kappa^{L+1}_{j}*\psi_{k,m}(x)|$ becomes
\begin{eqnarray}
    |\kappa^{L+1}_{j}*\psi_{k,m}(x)|
    &\leq&
    \sup_{z \in \mathbb{R}}
    |\partial^{L+1}\Psi_{L+1}(z)|
    2^{-(L+1)(j-k)}
    2^j 2^{-k/2}
    \int_{c_{L} Q_{j}} |(\partial^{L+1}\psi)_{k,m}(x-y)| dy \nonumber \\
    &\leq&
    c_{\psi} \sup_{z \in \mathbb{R}} |\partial^{L+1}\Psi_{L+1}(z)|
    2^{-(L+1)(j-k)}
    2^j 2^{k/2}
    \int_{c_{L} Q_{j}} \chi_{k,m}((x-y)/\gamma) dy
\end{eqnarray}

(ii)If $j < k$,

\begin{eqnarray}
    \mathcal{F}[\kappa^{L+1}_{j}*\psi_{k,m}](\xi)
    &=&
    (\xi/2^j)^{2(L+1)} \widehat{\Psi}_{L+1}(\xi/2^j)
    2^{-k/2} \widehat{\psi}(\xi/2^k) e^{-i m/2^k} \nonumber \\
    &=&
    2^{-(L+1)(k-j)} (\xi/2^j)^{3(L+1)} \widehat{\Psi}_{L+1}(\xi/2^j) 2^{-k/2} (\xi/2^k)^{-(L+1)} \widehat{\psi}(\xi/2^k) e^{-i m/2^k}
\end{eqnarray}
Thus, we obtain the following expression:
\begin{equation}
    \kappa^{L+1}_{j}*\psi_{k,m}(x)
    =
    2^{-(L+1)(k-j)}
    2^j 2^{-k/2} (\partial^{3(L+1)} \Psi_L)(2^j \cdot)*
    (\mathcal{F}^{-1}[\widehat{\psi}/(\cdot)^{L+1}])_{k,m}(x)
\end{equation}
where $(\mathcal{F}^{-1}[\widehat{\psi}/(\cdot)^{L+1}])_{k,m}(x) =2^k (\mathcal{F}^{-1}[\widehat{\psi}/(\cdot)^{L+1}])(2^k x - m) $. By Lemma \ref{support}, the estimate of $|\kappa^{L+1}_{j}*\psi_{k,m}(x)|$ becomes the following:
\begin{eqnarray}
    |\kappa^{L+1}_{j}*\psi_{k,m}(x)|
    &\leq&
    \sup_{z \in \mathbb{R}} |\partial^{3(L+1)} \Psi_{L+1}(z)|
    2^{-(L+1)(j-k)}
    2^j 2^{-k/2}
    \int_{c_{L} Q_j} 
    |(\mathcal{F}^{-1}[\widehat{\psi}/(\cdot)^{L+1}])_{k,m}(x-y)|dy \nonumber \\
    &\leq&
    \sup_{z \in \mathbb{R}} |\mathcal{F}^{-1}[\widehat{\psi}/(\cdot)^{L+1}](z)|
    \sup_{z \in \mathbb{R}} |\partial^{3(L+1)}\Psi_{L+1}(z)|
    2^{-(L+1)(j-k)}
    2^j 2^{k/2}
    \int_{c_{L} Q_j} 
    \chi_{k,m}((x-y)/\gamma) dy \nonumber 
\end{eqnarray}
\end{proof}

\subsection{Proof of  Theorem \ref{L2} and Theorem \ref{analy}}
\label{last_appendix}
\begin{proof}[Proof of Theorem \ref{L2}]

Assume $e^{b|x|}f \in L^2(\mathbb{R})$ for all $b<a$. 
Let $\xi,\eta \in \mathbb{R}$ and $|\eta|<a$.
Choose $b>0$ and $\epsilon>0$ such that $|\eta| < b-\epsilon < a$ and $b < a$. For all $N \in \mathbb{N}_0$, we have the following expression:
\begin{eqnarray}
\label{dominate}
    \int_{\mathbb{R}} |x|^N |e^{i(\xi+i \eta)x}f(x)|dx
    &=&
    \int_{\mathbb{R}} |x|^N |e^{-\eta x}f(x)|dx \nonumber \\
    &\leq&
    \int_{\mathbb{R}} |x|^N e^{(b -\epsilon) |x|} |f(x)|dx \nonumber \\
    &\leq&
    \left( \int_{\mathbb{R}} |x|^{2N} e^{-2\epsilon |x|} dx \right)^{1/2}
    \left( \int_{\mathbb{R}} e^{2b |x|} |f(x)|^2 dx\right)^{1/2} \nonumber \\
    &<& \infty
\end{eqnarray}
 In the third inequality, the Hölder inequality is used. Thus, if $\Im|\zeta| < a $, by the Lebesgue's dominated convergence theorem
\begin{equation}
    \partial^N \widehat{f}(\zeta) 
    =
    (2 \pi)^{-1/2}
    \int_{\mathbb{R}} (-ix)^N e^{-i \zeta x }f(x)dx
\end{equation}
for all $N \in \mathbb{N}_0$ and integral of the right side converges because of (\ref{dominate}).
This shows analyticity of $\widehat{f}(\zeta)$ on $Im|\zeta| < a$.  If $|\eta| \leq b$ 
\begin{eqnarray}
\|\widehat{f}(\cdot + i \eta) \|_{L^2}^2
&=&
\int_{\mathbb{R}} \left| \int_{\mathbb{R}} e^{-i(\xi+i \eta)x}f(x) dx \right|^2 d\xi  \nonumber \\
&=&
\int_{\mathbb{R}} \left| \int_{\mathbb{R}} e^{-i\xi x} e^{\eta x}f(x) dx \right|^2 d\xi \nonumber \\
&=&
\|\mathcal{F}[e^{\eta \cdot}f(\cdot)]\|_{L^2}^2
\nonumber \\
&=&
\|e^{\eta \cdot}f(\cdot)\|_{L^2}^2
\end{eqnarray}
Thus,
\begin{equation}
\label{inverse}
    \sup_{|\eta| \leq b} \|\widehat{f}(\cdot + i \eta) \|_{L^2}
    = \sup_{|\eta| \leq b} \|e^{\eta \cdot}f(\cdot)\|_{L^2}
    < \infty
\end{equation}
The boundedness of $\sup_{|\eta| \leq b} \|e^{\eta \cdot}f(\cdot)\|_{L^2}$ is justified by assumption $e^{b|x|}f \in L^2(\mathbb{R})$ for all $b<a$.
This result shows (\ref{supsup}).

Next, the inverse assertion is proved. We claim, for $|\eta|<a$ and $g \in C_0^{\infty}(\mathbb{R})$.
\begin{equation}
    \int_{\mathbb{R}} \overline{\widehat{g}(\xi)} \widehat{f}(\xi) d \xi
    =
    \int_{\mathbb{R}} \overline{\widehat{g}(\xi-i\eta)} \widehat{f}(\xi-i\eta) d \xi
\end{equation}
Assume supp $\: g \subset [-R,R]$. Then, $\widehat{g}(\xi)$ is analytic on $\mathbb{C}$ and, for all $N \in \mathbb{N}_0$, $C_N$ exists such that the following expression is satisfied:
\begin{equation}
\label{payley}
    |\widehat{g}(\xi)|
    \leq
    \frac{C_N e^{R |Im \xi|}}{(1+|\xi|)^{N+1}}
\end{equation}
Based on the Cauchy–Riemann equations, $\overline{\widehat{g}(\overline{\xi})}$ is also analytic and satisfies (\ref{payley}). Therefore, we apply Cauchy's integral formula to $\overline{\widehat{g}(\overline{\xi})} \widehat{f}(\xi))$ and obtain the following expression:
\begin{eqnarray}
\label{cauchy}
    && \int_{-L}^L \overline{\widehat{g}(\xi)} \widehat{f}(\xi) d \xi \nonumber \\
    &=&
    \int_{-L}^L
    \overline{\widehat{g}(\xi-i\eta)} \widehat{f}(\xi+i\eta)d \xi
    -
    \int_{0}^{\eta} \overline{\widehat{g}(L-i\eta)}\widehat{f}(L+i\eta)  i d \eta
    +
     \int_{0}^{\eta} \overline{\widehat{g}(-L-i\eta)} \widehat{f}(L+i\eta)i d \eta
\end{eqnarray}
with $a > \eta>0$.
Consider the estimate of the second and third terms in (\ref{cauchy}). Here, $\int_{0}^{\eta} |\overline{\widehat{g}(L-i\eta)}\widehat{f}(L+i\eta)  | d \eta$ is integrable with respect to $L$ by Fubini--Tonelli theorem and 
\begin{eqnarray*}
     \int_{0}^{\eta} \int_{\mathbb{R}} 
    \left|\overline{\widehat{g}(L-i\eta)}\widehat{f}(L+i\eta)\right| dL \; d\eta 
    &\leq&
    \int_{0}^{\eta} \int_{\mathbb{R}} 
    \frac{C_N e^{R |\eta|}}{(1+|L|)^{N+1}}
    \left|\widehat{f}(L+i\eta)\right| dL \; d\eta \nonumber \\
    &\leq&
    \int_{0}^{\eta} C_N e^{R |\eta|}
    \left(\int_{\mathbb{R}} 
    \frac{dL}{(1+|L|)^{2N+2}}  \right)^{1/2}
    \sup_{|y| \leq b} \|\widehat{f}(\cdot+i y) \|_{L^2}  \; d\eta  \nonumber \\
    &<& \infty
\end{eqnarray*}
The $\int_{0}^{\eta} |\overline{\widehat{g}(-L-i\eta)} \widehat{f}(L+i\eta)| d \eta$ is also integrable with respect to $L$ by the same discussion. Thus, two sequences $\{ L_n^{+} \}_{n \in \mathbb{N}_0}$ and $\{ L_n^{-} \}_{n \in \mathbb{N}_0}$ exists such that
\begin{equation}
    L_n^{+} \xrightarrow[]{n \rightarrow \infty} +\infty
    \quad \text{and} \quad
    L_n^{-} \xrightarrow[]{n \rightarrow \infty} +\infty
\end{equation}
and
\begin{equation}
    \int_{0}^{\eta} |\overline{\widehat{g}(L_n^{+}-i\eta)}\widehat{f}(L_n^{+}+i\eta)| d \eta
    \xrightarrow[]{n \rightarrow \infty} 0
    \quad \text{and} \quad
    \int_{0}^{\eta} |\overline{\widehat{g}(-L_n^{-}-i\eta)} \widehat{f}(L_n^{-}+i\eta)| d \eta
    \xrightarrow[]{n \rightarrow \infty} 0
\end{equation}
Thus, we obtain the desired equality.
\begin{equation}
\label{desired_1}
    \int_{\mathbb{R}} \overline{\widehat{g}(\xi)} \widehat{f}(\xi) d \xi
    =
    \int_{\mathbb{R}} \overline{\widehat{g}(\xi-i\eta)} \widehat{f}(\xi+i\eta) d \xi
\end{equation}
From the equality (\ref{desired_1}) and $\mathcal{F}[\overline{g}(x) e^{\eta x}](\xi-i \eta) = \widehat{g}(\xi)$, we obtain the following:
\begin{eqnarray}
\label{c_0}
    \int_{\mathbb{R}} \overline{g}(x) e^{\eta x}f(x) dx
    &=&
    \int_{\mathbb{R}} \overline{\widehat{g}(\xi)}\widehat{f}(\xi+i\eta)d\xi
    \nonumber \\
    &=&
    \int_{\mathbb{R}} \overline{g(x)} \mathcal{F}^{-1}[\widehat{f}(\cdot+i\eta)](x)dx
\end{eqnarray}
(\ref{c_0}) holds for every $g \in C_0^{\infty}(\mathbb{R})$, then $e^{\eta x}f(x)=\mathcal{F}^{-1}[\widehat{f}(\cdot+i\eta)](x)$. This leads to $e^{\eta x}f \in L^2(\mathbb{R})$ when $a > \eta > 0$. In the case $-a < \eta <0$, $e^{\eta x}f \in L^2(\mathbb{R})$. also holds. Thus, for $|\eta|<a$
\begin{equation}
    \int_{\mathbb{R}} |e^{\eta|x|}f(x)|^p dx
    \leq
    \int_{x <0} |e^{-\eta x}f(x)|^p dx
    +
    \int_{x \leq 0} |e^{\eta x}f(x)|^p dx < \infty
\end{equation}
Finally, we obtain $e^{\eta |x|}f \in L^2(\mathbb{R})$.
\end{proof}

\begin{proof}[Proof of Theorem \ref{analy}]
Assume $x\geq 0$. By the same procedure in the proof of theorem\ref{L2}, we have the following expression:
\begin{equation}
\label{desired10}
    f(x) = 
    \int_{\mathbb{R}} e^{-i \xi x} \widehat{f}(\xi) d\xi
    =\int_{\mathbb{R}} e^{-i(\xi-ib)x} \widehat{f}(\xi-ib) d\xi
\end{equation}

Thus,
\begin{eqnarray}
\label{positive}
| e^{bx} f(x) |
&\leq&
\|\widehat{f}(\cdot-ib) \|_{L^1}
\nonumber \\
\end{eqnarray}
If $x<0$, a similar estimate leads to
\begin{equation}
\label{negative}
    | e^{-bx} g(x) |
    \leq
    \left\|\widehat{f}(\cdot + ib)\right\|_{L^1}
\end{equation}
From (\ref{positive}) and (\ref{negative})
\begin{equation}
    | e^{b|x|} f(x) |
    \leq
    \max_{y=b,-b}\left\|\widehat{f}(\cdot + iy)\right\|_{L^1}
\end{equation}
Thus, the desired inequality follows when $C_b := \max_{y=b,-b}\left\|\widehat{f}(\cdot + iy)\right\|_{L^1}$, which is bounded by condition (ii).
\end{proof}

\noindent\textbf{Acknowledgments:} This work was supported by the World-leading INnovative Graduate Study Program for Frontiers of Mathematical Sciences and Physics, The University of Tokyo.

\bibliographystyle{unsrt}
\bibliography{sample}

\end{document}